\pgfplotsset{compat=1.10}
\title{Random Lochs' Theorem}
\author[Charlene Kalle]{Charlene Kalle}
\address[Charlene Kalle]{Mathematisch Instituut, Leiden University, Niels Bohrweg 1, 2333CA Leiden, The Netherlands}
\email[Charlene Kalle]{kallecccj@math.leidenuniv.nl}
\author[Evgeny Verbitskiy]{Evgeny Verbitskiy}
\address[Evgeny Verbitskiy]{Mathematisch Instituut, Leiden University, Niels Bohrweg 1, 2333CA Leiden, The Netherlands \textit{and} Bernoulli Institute, Groningen University, The Netherlands}
\email[Evgeny Verbitskiy]{evgeny@math.leidenuniv.nl}
\author[Benthen Zeegers]{Benthen Zeegers$^\dagger$}
\address[Benthen Zeegers]{Mathematisch Instituut, Leiden University, Niels Bohrweg 1, 2333CA Leiden, The Netherlands}
\email[Benthen Zeegers]{b.p.zeegers@math.leidenuniv.nl}
\begin{document}
\thanks{$^\dagger$ The third author would like to thank the University of Vienna for their hospitality while working on part of this project.}
\subjclass[2020]{11K55, 28D20, 37A10, 60F05, 11K60, 37H15, 37A44, 11J83}
\keywords{Lochs' Theorem, random dynamics, number expansions, fiber entropy, Central Limit Theorem, Rokhlin formula}

\begin{abstract}
In 1964 Lochs proved a theorem on the number of continued fraction digits of a real number $x$ that can be determined from just knowing its first $n$ decimal digits. In 2001 this result was generalised to a dynamical systems setting by Dajani and Fieldsteel, where it compares sizes of cylinder sets for different transformations. In this article we prove a version of Lochs' Theorem for random dynamical systems as well as a corresponding Central Limit Theorem. The main ingredient for the proof is an estimate on the asymptotic size of the cylinder sets of the random system in terms of the fiber entropy. To compute this entropy we provide a random version of Rokhlin's formula for entropy.
\end{abstract}

\maketitle

\newtheorem{prop}{Proposition}[section]
\newtheorem{theorem}{Theorem}[section]
\newtheorem{lemma}{Lemma}[section]
\newtheorem{cor}{Corollary}[section]
\newtheorem{remark}{Remark}[section]
\theoremstyle{definition}
\newtheorem{defn}{Definition}[section]
\newtheorem{ex}{Example}[section]

\maketitle

\section{Introduction}
Real numbers can be represented in many different ways, e.g.~by binary, decimal or continued fraction expansions, and one can wonder about the amount of information that each one of these expansions carries. In 1964 Lochs considered a specific question of this form: Given the first $n$ decimal digits of a further unknown irrational number $x \in (0,1)$, what is the largest number $m=m(n,x)$ of regular continued fraction digits of $x$ is that can be determined from this information. Lochs \cite{lochs} answered this question for the limit $n \rightarrow \infty$ by showing that for Lebesgue almost every $x \in (0,1)$,
\begin{align}\label{eq:1.1}
\lim_{n \rightarrow \infty} \frac{m(n,x)}{n} = \frac{6 \log 2 \log 10}{\pi^2}.
\end{align}
Over the years Lochs' result has been refined and generalised in many directions. Let $\lambda$ denote the one-dimensional Lebesgue measure. In \cite{faivre-clt} Faivre established a Central Limit Theorem associated to Lochs' Theorem:
\begin{align}\label{q:faivre}
\lim_{n \rightarrow \infty} \lambda\bigg( \Big\{ x \in (0,1): \frac{m(n,x) - n \frac{6 \log 2 \log 10}{\pi^2}}{c_1 \sqrt{n}} \leq u \Big\} \bigg) = \frac{1}{\sqrt{2\pi}}\int_{-\infty}^u e^{-t^2 / 2} dt
\end{align}
holds for some constant $c_1 > 0$. See \cite{faivre-ldp,faivre01,wu06,wu08} for other results related to the limit in \eqref{eq:1.1} and \cite{li08,barreira08,fang16,fang} for results where the decimal expansions in \eqref{eq:1.1} are replaced by $\beta$-expansions.

\vskip .2cm
In \cite{bosma99} Bosma, Dajani and Kraaikamp highlighted that Lochs' Theorem can be seen as a dynamical statement. This viewpoint was further developed in \cite{dajani01}, where Dajani and Fieldsteel gave the dynamical equivalent of the local limit statement from \eqref{eq:1.1} for what they called {\em number theoretic fibered maps (NTFM)}. An NTFM is a triple $(T,\mu,\mathcal D)$ where $T: [0,1) \rightarrow [0,1)$ is a surjective map, $\mu$ is a Borel measure and $\mathcal D = \{ D_1, D_2, \ldots \}$ is an at most countable interval partition of $[0,1)$, such that
\begin{itemize}
\item[(n1)] $T|_D$ is continuous and strictly monotone on each element $D \in \mathcal D$;
\item[(n2)] $\mu$ is an ergodic invariant probability measure for $T$ that is equivalent to $\lambda$ with a density that is bounded and bounded away from zero;
\item[(n3)] the partition $\mathcal D$ generates the Borel $\sigma$-algebra $\mathcal B$ on $[0,1)$ in the sense that if for each $n$ we use
\begin{equation}\label{q:cylindersbar}
\mathcal D_n = \bigvee_{k=0}^{n-1} T^{-k} \mathcal D = \{ D_1 \cap T^{-1} D_2 \cap \cdots \cap T^{-(n-1)} D_n \, : \, D_k \in \mathcal D, \, 1 \le k \le n \}
\end{equation}
to denote the {\em level $n$ cylinders} of $T$, then the smallest $\sigma$-algebra containing all these sets for all $n \ge 1$, denoted by $\sigma \big( \bigvee_{k\ge 0} T^{-k} \mathcal D)$, equals $\mathcal B$ up to sets of Lebesgue measure zero;
\item[(n4)] the entropy $-\sum_{D \in \mathcal D}\mu(D)\log \mu(D)$ of $\mathcal D$ with respect to $\mu$ is finite.
\end{itemize}
The name NTFM refers to the fact that an NTFM generates for each $x \in [0,1)$ a digit sequence $(d^T_k (x))_{k \ge 1}$ by setting $d_k^T(x) = j$ if $T^{k-1}(x) \in D_j$, and for certain maps $T$ these sequences correspond to well-known number expansions. Number expansions that can be obtained in this way include $\beta$-expansions, various L\"uroth-type expansions and various types of continued fraction expansions. 

\vskip .2cm
If for two NTFM's $(T, \mu, \mathcal D)$ and $(S, \tilde \mu, \mathcal E)$ we define the number
\[ m_{T,S} (n,x) = \sup\{m \in \mathbb{N}: \mathcal D_n(x) \subseteq \mathcal E_m(x)\},\]
where $\mathcal D_n (x)$ and $\mathcal E_m(x)$ denote the elements of the partitions $\mathcal D_n$ and $\mathcal E_m$ as in \eqref{q:cylindersbar} that contain $x$, respectively, then one can interpret $m_{T,S} (n,x)$ as the largest $m$ so that the level $m$ cylinder for $S$ containing $x$ can be determined from knowing only the level $n$ cylinder for $T$ that contains $x$. Equivalently, if we use $(d_k^T(x))_{k \ge 1}$ and $(d_k^S(x))_{k \ge 1}$ to denote the digit sequences produced by $T$ and $S$, $m_{T,S} (n,x)$ is the largest $m$ such that the digits $d_1^S (x), \ldots , d_m^S(x)$ can be determined from knowing $d_1^T (x), \ldots , d_n^T(x)$ of a further unknown $x \in [0,1)$. The authors of \cite{dajani01} proved that for any two NTFM's $(T, \mu, \mathcal D)$ and $(S, \tilde \mu, \mathcal E)$ with measure theoretic entropies $ h_\mu(T), h_{\tilde \mu} (S) > 0$ it holds that
\begin{equation}\label{q:df}
\lim_{n\rightarrow \infty} \frac{m_{T,S}(n,x)}{n} = \frac{h_\mu(T)}{h_{\tilde \mu}(S)} \qquad \lambda\text{-a.e.}
\end{equation}
Lochs' original result can be recovered by taking $T (x) = 10 x \pmod 1$ and $S (x) = \frac1x \pmod 1$ (the Gauss map). Similar to the result in \eqref{q:faivre} by Faivre, Herczegh \cite{atilla} proved a Central Limit Theorem for the statement in \eqref{q:df} for a specific class of pairs of NTFM's. For two NTFM's $(T, \mu, \mathcal D)$ and $(S, \tilde \mu, \mathcal E)$ that satisfy $ h_\mu(T), h_{\tilde\mu} (S) > 0$ and several additional conditions, he proved in \cite[Corollary 2.1]{atilla} that for each $u \in \mathbb R$,
\begin{equation}\label{q:clther}
\lim_{n \rightarrow \infty} \tilde \mu\bigg( \Big\{ x \in (0,1): \frac{m_{T,S}(n,x)-n \frac{h_\mu(T)}{h_{\tilde \mu}(S)}}{c_2 \sqrt n} \le u \Big\} \bigg) = \frac{1}{\sqrt{2\pi}}\int_{-\infty}^u e^{-t^2 / 2} \, dt
\end{equation}
for some appropriate constant $c_2>0$.

\vskip .2cm
The past decades have seen an increasing interest in random dynamical systems that generate number expansions. This is partly due to the fact that often such systems generate for a typical $x \in [0,1)$ not just one, but uncountably many different number expansions of a given type. Instead of using a single transformation, a {\em random dynamical system} consists of a family of maps $\{T_i: X \to X\}_{i \in I}$, for some index set $I$, each acting on the same state space $X$ and from which one map is chosen at each time step according to some probability law. There are various random dynamical systems related to number expansions. The random $\beta$-transformation was first introduced in \cite{DK03} and then further investigated in \cite{DV05,invariant,DK07,DK13,Kem14,BD17,DJ17,Suz19,DKM21}. Interesting features of this system are its relation to Bernoulli convolutions, see e.g.~\cite{JSS11,DK13,Kem14}, and to $\beta$-encoders, see e.g~\cite{DDGV02,DGWY,Gun,KHTA,JMKA,Makino,SJO,JM} and the references therein. A random system producing binary expansions was studied in \cite{DK20}, random dynamical systems related to continued fraction expansions appear in \cite{KKV17,DO18,BRS20,AFGV,DKM21,KMTV} and random L\"uroth maps are considered in \cite{KM21,KM}.

\vskip .2cm
In this article we extend the version of Lochs' Theorem in \eqref{q:df} to the setting of random dynamical systems and give a corresponding Central Limit Theorem in the spirit of \eqref{q:clther}. The class of random dynamical systems we consider, which we call {\em random number systems}, contains the class of deterministic NTFM's and all of the random dynamical systems related to number expansions mentioned above. A random number system consists of a family of maps $ \{ T_i : [0,1) \to [0,1) \}_{i \in I}$, where the index set $I$ is a possibly uncountable Polish space, each map $T_i:[0,1) \to [0,1)$, $i \in I$, admits an appropriate partition $\alpha_i = \{A_{i,0}, A_{i,1}, \ldots\}$ of $[0,1)$ and there exists an appropriate probability measure $\mu$ on $I^\mathbb N \times [0,1)$. (The precise definition will be given in the next section.) Thus a random number system is a quintuple $\mathcal T = (I, \mathbf P,  \{ T_i  \}_{i \in I}, \mu, \{ \alpha_i\}_{i \in I})$, where $\mathbf P$ is the probability law on $I^\mathbb N$ determining the random choices.

\vskip .2cm
Each $\omega \in I^\mathbb N$ gives for each $x \in [0,1)$ a {\em random orbit} by setting $T_{\omega}^0(x) =x$ and, for each $n \ge 1$,
\[ T_{\omega}^n(x) = T_{\omega_1 \cdots \omega_n}(x)  =
T_{\omega_n} \circ T_{\omega_{n-1}} \circ \cdots \circ T_{\omega_1}(x). \]
The partitions $\alpha_i$, $i \in I$, lead for each $(\omega,x)$ to a digit sequence $(d^\mathcal T_n(\omega,x))_{n \ge 1}$ by setting
\begin{equation}\label{q:digitsequence}
d_n^\mathcal T(\omega,x) = j \quad \text{if } \, T_\omega^{n-1}(x) \in A_{\omega_n,j}.
\end{equation}
Analogous to \eqref{q:cylindersbar}, for a random number system $\mathcal T$ the {\em random level $n$ cylinders} are defined as follows: For each $\omega \in I^{\mathbb{N}}$ and $n \in \mathbb{N}$ we define the partition
\begin{equation}\begin{split}\label{q:cylindersr}
\alpha_{\omega,n} =\ & \bigvee_{k = 0}^{n-1} (T_\omega^k)^{-1} \alpha_{\omega_{k+1}} \\
=\ & \{ A_{\omega_1,j_1}\cap T_{\omega_1}^{-1}A_{\omega_2,j_2} \cap \cdots \cap T_{\omega_1 \cdots \omega_{n-1}}^{-1} A_{\omega_n,j_n} \, : \, A_{\omega_k,j_k} \in \alpha_{\omega_k}, \, 1\le k \le n\}.
\end{split}\end{equation}
Write $\alpha_{\omega,n}(x)$ for the random $(n, \omega)$-cylinder that contains $x$. Given two random number systems $\mathcal{T}=(I, \mathbf P, \{ T_i \}_{i \in I},\mu, \{ \alpha_i\}_{i \in I})$ and $\mathcal S = (J, \mathbf Q, \{S_j\}_{j \in J}, \rho, \{\gamma_j \}_{j \in J})$, for each $n \in \mathbb{N}$, $\omega \in I^{\mathbb N}$, $\tilde{\omega} \in J^{\mathbb N}$ and $x \in [0,1)$, let
\begin{align}\label{eq:1.15}
m_{\mathcal T,\mathcal S}(n,\omega,\tilde{\omega},x) = \sup\{m \in \mathbb{N}: \alpha_{\omega,n}(x) \subseteq \gamma_{\tilde{\omega},m}(x)\}.
\end{align}
This quantity can be interpreted as follows: For given $\omega \in I^{\mathbb N}$ and $\tilde{\omega} \in J^{\mathbb N}$, $m_{\mathcal T,\mathcal S}(n,\omega,\tilde{\omega},x)$ is the largest level $m$ for which we can determine the random $(m, \tilde \omega)$-cylinder for $\mathcal S$ containing $x$ from knowing only the random $(n, \omega)$-cylinder for $\mathcal T$ that containing $x$. Alternatively, it is the largest $m$ such that $d_1^\mathcal S(\tilde{\omega},x),\ldots, d_m^\mathcal S(\tilde{\omega},x)$ can be determined from knowing the digits $d_1^\mathcal T(\omega,x),\ldots, d_n^\mathcal T(\omega,x)$ of a further unknown $x \in [0,1)$. In this article we obtain the following Random Lochs' Theorem, where the measure theoretic entropy from \eqref{q:df} is replaced by \emph{fiber entropy}, which for a random number system $\mathcal{T}$ is a quantity $h^{\text{fib}}(\mathcal{T}) \in [0,\infty)$ and will be defined in the next section.

\begin{theorem}\label{t:randomlochs}
Let $\mathcal{T} = (I, \mathbf P,  \{ T_i  \}_{i \in I}, \mu, \{ \alpha_i\}_{i \in I})$ and $\mathcal{S} = (J, \mathbf Q, \{S_j\}_{j \in J}, \rho, \{\gamma_j \}_{j \in J})$ be two random number systems. If $h^{\mathrm{fib}}(\mathcal{T}),h^{\mathrm{fib}}(\mathcal{S}) > 0$, then
\[ \lim_{n \rightarrow \infty} \frac{m_{\mathcal T,\mathcal S}(n,\omega,\tilde{\omega},x)}{n} = \frac{h^{\mathrm{fib}}(\mathcal{T})}{h^{\mathrm{fib}}(\mathcal{S})} \qquad \lambda\text{-a.e.}\]
for $\mathbf P \times \mathbf Q$-a.a.~$(\omega,\tilde{\omega}) \in I^{\mathbb{N}} \times J^{\mathbb{N}}$.
\end{theorem}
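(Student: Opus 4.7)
The strategy is to adapt the argument of Dajani--Fieldsteel~\cite{dajani01} for \eqref{q:df} to the random setting, with the fiber entropies $h^{\mathrm{fib}}(\mathcal T)$ and $h^{\mathrm{fib}}(\mathcal S)$ playing the role of the measure-theoretic entropies. The central tool is a \emph{random Shannon--McMillan--Breiman (RSMB) theorem} for random number systems: for $\mathbf P$-a.e.\ $\omega$ and $\mu_\omega$-a.e.\ $x$,
\[ -\frac{1}{n}\log \mu_\omega\bigl(\alpha_{\omega,n}(x)\bigr) \ \longrightarrow\ h^{\mathrm{fib}}(\mathcal T), \]
where $\mu_\omega$ denotes the disintegration of $\mu$ along the fiber $\{\omega\}\times[0,1)$. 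Because a random number system is required (in analogy with condition~(n2)) to have fiber measures $\mu_\omega$ equivalent to $\lambda$ with Radon--Nikodym derivatives bounded and bounded away from $0$, this statement transfers from $\mu_\omega$ to $\lambda$, giving $-\tfrac1n\log\lambda(\alpha_{\omega,n}(x)) \to h^{\mathrm{fib}}(\mathcal T)$ and analogously $-\tfrac1n\log\lambda(\gamma_{\tilde\omega,n}(x)) \to h^{\mathrm{fib}}(\mathcal S)$ on a set of full $\mathbf P\times\mathbf Q\times\lambda$-measure.

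Writing $h_1 = h^{\mathrm{fib}}(\mathcal T)$, $h_2 = h^{\mathrm{fib}}(\mathcal S)$ and $m = m_{\mathcal T,\mathcal S}(n,\omega,\tilde\omega,x)$, I would fix $(\omega,\tilde\omega,x)$ in the full-measure set where both RSMB limits hold. Since $\lambda(\alpha_{\omega,n}(x)) \to 0$, necessarily $m \to \infty$ as $n \to \infty$. The definition of $m$ supplies two facts: \textbf{(i)}~$\alpha_{\omega,n}(x) \subseteq \gamma_{\tilde\omega,m}(x)$ and \textbf{(ii)}~$\alpha_{\omega,n}(x) \not\subseteq \gamma_{\tilde\omega,m+1}(x)$. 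For the upper bound on $m/n$, (i) implies $\lambda(\alpha_{\omega,n}(x)) \le \lambda(\gamma_{\tilde\omega,m}(x))$; taking $-\tfrac1n\log$ of both sides and using the two RSMB limits (the one for $\mathcal S$ applied along the subsequence $m=m(n)\to\infty$) yields $h_2 \cdot \limsup_n m/n \le h_1$, i.e.\ $\limsup_n m/n \le h_1/h_2$.

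The lower bound is the main obstacle. From (ii), together with the fact that both $\alpha_{\omega,n}(x)$ and $\gamma_{\tilde\omega,m+1}(x)$ are intervals containing $x$, the interval $\alpha_{\omega,n}(x)$ must extend past at least one endpoint of $\gamma_{\tilde\omega,m+1}(x)$ and therefore overlaps an adjacent level-$(m+1)$ cylinder $C$ sitting inside $\gamma_{\tilde\omega,m}(x)$. The aim is to convert this topological fact into a quantitative estimate of the form $\lambda(\alpha_{\omega,n}(x)) \ge e^{-\varepsilon n}\,\lambda(\gamma_{\tilde\omega,m+1}(x))$ for every $\varepsilon>0$ and all sufficiently large $n$, from which a second application of the RSMB limits yields $\liminf_n m/n \ge h_1/h_2$. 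The difficulty is that $C$ can a priori be much smaller than $\gamma_{\tilde\omega,m+1}(x)$, so that the trivial inequality $\lambda(\alpha_{\omega,n}(x)) \ge \lambda(\alpha_{\omega,n}(x) \cap C)$ is too weak. My plan, parallel to Dajani--Fieldsteel, is a Borel--Cantelli argument: for each $\varepsilon>0$, I would show that the $\mathbf P \times \mathbf Q \times \lambda$-measure of the set of $(\omega,\tilde\omega,x)$ where the estimate fails at step $n$ is summable, so that the bad event occurs only finitely often almost surely. The required summability bound relies on the random Rokhlin formula developed elsewhere in the paper, which together with the RSMB theorem controls the number of level-$(m+1)$ cylinders of a given Lebesgue size. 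Combining the two bounds then gives $\lim_n m/n = h_1/h_2$ as claimed.
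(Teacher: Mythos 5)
Your overall strategy --- establish the almost-sure asymptotics $-\tfrac1n\log\lambda(\alpha_{\omega,n}(x))\to h^{\mathrm{fib}}(\mathcal T)$ (and likewise for $\mathcal S$) via a random Shannon--McMillan--Breiman theorem, then feed these into the Dajani--Fieldsteel comparison of interval partitions --- is exactly the route the paper takes: the theorem is deduced in one line from Theorem~\ref{t:partitions} together with Theorem~\ref{t:main}(i). However, there is a genuine gap in your transfer step from $\mu_\omega$ to $\lambda$. You assert that a random number system is ``required (in analogy with (n2))'' to have fiber measures with Radon--Nikodym derivatives bounded and bounded away from zero. No such requirement appears in (r1)--(r7): condition (r6) only asks that $\mu$ be equivalent to $\mathbf P\times\lambda$, and the paper explicitly stresses (remark at the end of Section~\ref{subsec2.2}) that the boundedness part of (n2) is being \emph{dropped}, so that Theorem~\ref{t:randomlochs} strengthens \eqref{q:df} even in the deterministic case. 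Under the actual hypotheses your two-sided bound $c\,\lambda(\alpha_{\omega,n}(x))\le\mu_\omega(\alpha_{\omega,n}(x))\le C\,\lambda(\alpha_{\omega,n}(x))$ is unavailable. The paper circumvents this in the proof of Theorem~\ref{prop2.6a} by working at Lebesgue density points: for a.e.\ $(\omega,x)$ one can pick $\hat\omega$ with $\frac{d\hat\mu_{\hat\omega}}{d\lambda}(x)>0$ and $x$ a density point of this derivative, whence $\hat\mu_{\hat\omega}(\alpha_{\hat\omega,n}(x))/\lambda(\alpha_{\hat\omega,n}(x))$ converges to a positive finite value, which suffices for the logarithmic limit. (A related technical point you skip: the random SMB theorem is applied on the invertible extension $I^{\mathbb Z}$ with the conditional measures $\hat\mu_{\hat\omega}$ of $\hat\mu$, since the fiber-entropy machinery requires an invertible base.)

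A second, lesser issue: rather than citing the interval-partition theorem of \cite{dajani01} (reproduced as Theorem~\ref{t:partitions}), you re-derive it, and the hard half --- the lower bound $\liminf_n m/n\ge h_1/h_2$ --- is left as a plan that points to the wrong tool. No counting of level-$(m+1)$ cylinders of a given size via the Rokhlin formula is needed. The standard argument (used verbatim in the paper's Lemma~\ref{lemma0.7}) is that $\alpha_{\omega,n}(x)\not\subseteq\gamma_{\tilde\omega,m+1}(x)$ forces $\lambda(\alpha_{\omega,n}(x))\ge\mathrm{dist}(x,\partial\gamma_{\tilde\omega,m+1}(x))$; one then removes from each level-$k$ cylinder $E$ a boundary neighbourhood of relative length $\varepsilon_k$ with $\sum_k\varepsilon_k<\infty$, so the union of removed sets has small Lebesgue measure and outside it $\mathrm{dist}(x,\partial E)\ge\varepsilon_k\lambda(E)$, which is enough. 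The cleanest repair of your write-up is to invoke Theorem~\ref{t:partitions} as a black box once the cylinder asymptotics (correctly transferred to $\lambda$) are in hand, adding the routine Fubini step that produces a full $\mathbf P\times\mathbf Q$-measure set of pairs $(\omega,\tilde\omega)$ for which both asymptotics hold $\lambda$-a.e.
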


\vskip .2cm
We like to make two remarks about this result. Firstly, the quotient of measure theoretic entropies that appears as the value of the limit in the deterministic setting has been replaced by a quotient of fibered entropies in the random setting. Secondly, the setup allows for the index set $I$ of the family $\{ T_i \}_{i \in I}$ to be uncountable, so that the results apply to e.g.~random $\beta$-transformations where the value of $\beta$ can range over a whole interval, see Example~\ref{ex2.3} below. This makes the proofs more involved. For an example of a proof of Theorem~\ref{t:randomlochs} in case $I$ is countable (and under some additional assumptions), we refer to \cite[Proposition 6.28]{zeegers}.

\vskip .2cm
In \cite{dajani01} an essential ingredient to prove \eqref{q:df} is the following general result on interval partitions. If $\mathcal{P} = \{P_n\}_{n=1}^{\infty}$ is a sequence of interval partitions and $c \geq 0$, we say that \emph{$\mathcal{P}$ has entropy $c$ $\lambda$-a.e.}~if
\[ \lim_{n \rightarrow \infty} - \frac{\log \lambda(P_n(x))}{n} = c \qquad \lambda\text{-a.e.},\]
where $P_n(x)$ denotes the element of the partition $P_n$ containing $x$. 
\begin{theorem}[Theorem 4 of \cite{dajani01}]\label{t:partitions}
 Let $\mathcal{P} = \{P_n\}_{n=1}^{\infty}$ and $\mathcal{Q} = \{Q_n\}_{n=1}^{\infty}$ be two sequences of interval partitions.  For each $n \in \mathbb{N}$ and $x \in [0,1)$, put
\[ m_{\mathcal{P},\mathcal{Q}}(n,x) = \sup\{ m \in \mathbb{N} : P_n(x) \subseteq Q_m(x) \}.\]
Suppose that $\mathcal{P}$ has entropy $c \in (0,\infty)$ $\lambda$-a.e.~and $\mathcal{Q}$ has entropy $d \in (0,\infty)$ $\lambda$-a.e. Then
\[ \lim_{n \rightarrow \infty} \frac{m_{\mathcal{P},\mathcal{Q}}(n,x)}{n} = \frac{c}{d} \qquad \lambda\text{-a.e.}\]
\end{theorem}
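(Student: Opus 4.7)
The plan is to prove the two inequalities $\limsup_n m(n,x)/n \leq c/d$ and $\liminf_n m(n,x)/n \geq c/d$ separately, writing $m(n,x) := m_{\mathcal{P},\mathcal{Q}}(n,x)$ throughout.

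For the upper bound, by definition $P_n(x) \subseteq Q_{m(n,x)}(x)$, so $\lambda(P_n(x)) \leq \lambda(Q_{m(n,x)}(x))$. I first verify that $m(n,x) \to \infty$ for $\lambda$-a.e.\ $x$: since $-\log\lambda(P_n(x))/n \to c > 0$, the intervals $P_n(x)$ shrink to $\{x\}$, and for $\lambda$-a.e.\ $x$ lying in the interior of every $Q_m(x)$ this forces $P_n(x) \subseteq Q_m(x)$ for every fixed $m$ and all large $n$, so $m(n,x) \geq m$. Taking $-\log$ of the displayed inequality, dividing by $n$, and applying the entropy hypothesis for $\mathcal{Q}$ along the diverging sequence $m(n,x)$ yields $c \geq d\cdot \limsup_n m(n,x)/n$.

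For the lower bound I would run a Borel--Cantelli argument. Fix $\epsilon > 0$ small and set $m_*(n) := \lfloor (c-\epsilon)n/(d+2\epsilon)\rfloor$. Define the typical sets
$$A_n^\epsilon := \{x:\lambda(P_n(x)) \leq e^{-(c-\epsilon)n}\}, \qquad B_m^\epsilon := \{x:\lambda(Q_m(x)) \geq e^{-(d+\epsilon)m}\}.$$
The entropy hypotheses guarantee that $\lambda$-a.e.\ $x$ lies in $A_n^\epsilon \cap B_{m_*(n)}^\epsilon$ for all sufficiently large $n$. The crucial geometric observation is: if $P_n(x) \not\subseteq Q_{m_*(n)}(x)$, then since both are intervals containing $x$, $P_n(x)$ must contain one of the two endpoints of $Q_{m_*(n)}(x)$; and when $x \in B_{m_*(n)}^\epsilon$ that endpoint is a boundary point of a \emph{large} atom of $Q_{m_*(n)}$, namely one of measure $\geq e^{-(d+\epsilon)m_*(n)}$. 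Since total Lebesgue measure is $1$, there are at most $e^{(d+\epsilon)m_*(n)}$ such large atoms, and hence at most $2e^{(d+\epsilon)m_*(n)}$ relevant boundary points. Each such boundary point lies in exactly one atom of $P_n$, of measure $\leq e^{-(c-\epsilon)n}$ whenever that atom meets $A_n^\epsilon$. Summing gives
$$\lambda\big(\{x:P_n(x) \not\subseteq Q_{m_*(n)}(x)\}\cap A_n^\epsilon \cap B_{m_*(n)}^\epsilon\big) \leq 2\, e^{(d+\epsilon)m_*(n) - (c-\epsilon)n},$$
which is exponentially small in $n$ by the choice of $m_*(n)$. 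Borel--Cantelli then yields $m(n,x) \geq m_*(n)$ eventually $\lambda$-a.e., so $\liminf_n m(n,x)/n \geq (c-\epsilon)/(d+2\epsilon)$, and letting $\epsilon \to 0$ finishes the proof.

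The main technical hurdle is the lower bound, specifically converting the combinatorial condition $P_n(x) \not\subseteq Q_{m_*(n)}(x)$ into a geometric condition amenable to counting. The interval structure of $\mathcal{Q}$ is essential here, and restricting to the good set $B_{m_*(n)}^\epsilon$ is what makes the number of relevant boundary points finite and exponentially controllable by the entropy of $\mathcal{Q}$; without it the partition $Q_{m_*(n)}$ could have infinitely many atoms and the count would be vacuous.
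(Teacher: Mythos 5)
Your argument is correct and follows essentially the standard Borel--Cantelli counting proof of this result; the paper itself gives no proof, importing the statement verbatim as Theorem 4 of \cite{dajani01}, and your two-sided argument (the trivial measure comparison for the upper bound, and the count of $P_n$-atoms covering endpoints of large $Q_{m_*(n)}$-atoms for the lower bound) matches the argument given there. The only points worth making explicit are that for $\lambda$-a.e.~$x$ the supremum defining $m_{\mathcal{P},\mathcal{Q}}(n,x)$ is finite and attained (since $\lambda(Q_m(x)) \to 0$ while $\lambda(P_n(x)) > 0$), so that writing $P_n(x) \subseteq Q_{m(n,x)}(x)$ in the upper bound is legitimate, and that the claim ``$\lambda$-a.e.~$x$ lies in the interior of every $Q_m(x)$'' holds because the endpoints of all atoms of all the $Q_m$ form a countable, hence $\lambda$-null, set.
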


The proof of \eqref{q:df} goes roughly along the following lines. An application of the Kolmogorov-Sinai Theorem and of the Shannon-McMillan-Breiman Theorem to the NTFM's $T$ and $S$ provides the appropriate asymptotics for the size of the cylinder sets from \eqref{q:cylindersbar} for both maps $T$ and $S$ to establish the positive entropy conditions and then Theorem~\ref{t:partitions} completes the proof. To achieve Theorem~\ref{t:randomlochs} we also employ Theorem~\ref{t:partitions} and therefore the main achievement here is obtaining the right asymptotics for the size of the random cylinder sets from \eqref{q:cylindersr}. More precisely Theorem~\ref{t:randomlochs} will appear as a corollary of the following theorem.

\begin{theorem}\label{t:main}
Let $\mathcal{T} = (I, \mathbf P, \{T_i\}_{i \in I},\mu, \{\alpha_i\}_{i \in I})$ be a random number system. The following hold,
\begin{itemize}
\item[(i)] For $\mathbf P$-a.a.~$\omega \in I^{\mathbb{N}}$ we have
\[ \lim_{n \rightarrow \infty} - \frac{\log \lambda\big(\alpha_{\omega,n}(x)\big)}{n} = h^{\mathrm{fib}}(\mathcal{T}), \qquad \lambda\text{-a.e.}\]
\item[(ii)] Let $\nu$ denote the marginal of $\mu$ on $I^{\mathbb N}$. Furthermore, let $F$ be the skew product on $I^{\mathbb N} \times [0,1)$ given by $F(\omega,x) = (\tau \omega, T_{\omega_1}(x))$, where $\tau$ denotes the left shift on $I^{\mathbb N}$. If $h_\nu(\tau) < \infty$, then
\[ h^{\mathrm{fib}}(\mathcal{T}) = h_\mu(F) - h_\nu (\tau).\]
\item[(iii)] If for each $i \in I$ and $A \in \alpha_i$  the restriction $T_i|_A$ is differentiable, then
\[ h^{\mathrm{fib}}(\mathcal{T}) = \int_{I^\mathbb N \times [0,1)} \log |DT_{\omega_1}(x)| \, d\mu(\omega,x).\]
\end{itemize}
\end{theorem}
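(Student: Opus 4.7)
The plan is to invoke a random Shannon--McMillan--Breiman (SMB) theorem and then trade the fibre conditional measure for Lebesgue measure. Taking $h^{\mathrm{fib}}(\mathcal T)$ to be defined via fibre Kolmogorov--Sinai entropies integrated against the marginal $\nu$, a random SMB theorem in the spirit of Bogensch\"utz and Kifer should give
\[
\lim_{n\to\infty} -\frac{1}{n}\log \mu_\omega(\alpha_{\omega,n}(x)) = h^{\mathrm{fib}}(\mathcal T), \qquad \mu\text{-a.e.},
\]
where $\{\mu_\omega\}$ is the disintegration of $\mu$ over $\nu$. Since the standing assumptions on a random number system force $\mu_\omega$ to be equivalent to $\lambda$ with density bounded above and away from zero on a $\mathbf P$-full set of $\omega$, taking $-\tfrac1n\log$ kills the density contribution and the conclusion of (i) follows. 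The technical subtlety is that $I$ is allowed to be uncountable, so a direct Polish-setting SMB statement, or a suitable countable measurable refinement of the $\alpha_i$, is needed before the classical proof carries through.

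\textbf{Part (ii).} This is essentially an instance of the Abramov--Rokhlin relative entropy formula. The projection $\pi\colon(\omega,x)\mapsto\omega$ is a measure-theoretic factor map from $(F,\mu)$ onto $(\tau,\nu)$, and the fibre entropy coincides by construction with the Rokhlin conditional entropy $h_\mu(F\mid \pi^{-1}\mathcal B_{I^{\mathbb N}})$. The standard identity
\[
h_\mu(F) = h_\nu(\tau) + h_\mu(F\mid \pi^{-1}\mathcal B_{I^{\mathbb N}}),
\]
valid whenever $h_\nu(\tau) < \infty$, then yields the claim. The remaining work is to verify that the hypotheses of a suitable version of Abramov--Rokhlin apply in our Polish, uncountable-alphabet skew-product setting.

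\textbf{Part (iii).} This will follow from (i) by a chain rule and distortion argument. Because each $T_{\omega_k}|_A$ is differentiable and strictly monotone, $T_\omega^n$ restricts to a differentiable bijection from $\alpha_{\omega,n}(x)$ onto an interval, so the mean value theorem gives
\[
\lambda(\alpha_{\omega,n}(x)) = \frac{\lambda\bigl(T_\omega^n \alpha_{\omega,n}(x)\bigr)}{\prod_{k=1}^{n} |DT_{\omega_k}(T_\omega^{k-1}(\xi))|}
\]
for some $\xi \in \alpha_{\omega,n}(x)$. Taking $-\tfrac1n\log$, using a bounded distortion estimate to swap $\xi$ for $x$ up to an $o(1)$ term, and applying Birkhoff's ergodic theorem to $f(\omega,x) = \log|DT_{\omega_1}(x)|$ on $(F,\mu)$ (ergodicity of $F$ and $f \in L^1(\mu)$ being folded into the standing hypotheses on a random number system) converts the right-hand side into $\int f\,d\mu$. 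Matching this against (i) gives the Rokhlin formula.

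The step I expect to be hardest is (i), specifically handling the uncountable index set $I$: the classical random SMB theorem is stated most cleanly for countable partitions, so extending it and uniformly controlling the density $d\mu_\omega/d\lambda$ across fibres will require some measurable-selection and disintegration work. In part (iii) the distortion estimate that replaces $\xi$ by $x$ is standard but requires some regularity which should be read off from the definition of a random number system.
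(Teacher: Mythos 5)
There are genuine gaps, the most serious being in part (iii). Your plan there is to deduce the Rokhlin formula from (i) via the mean value theorem, a bounded distortion estimate, and Birkhoff. But Theorem~\ref{t:main}(iii) assumes only that each branch $T_i|_A$ is \emph{differentiable}: the definition of a random number system contains no distortion control (no $C^2$ or H\"older regularity of $DT_i$), no uniform expansion, and no requirement that branches be full, so neither the step replacing $\xi$ by $x$ nor the claim that $\tfrac1n\log\lambda\bigl(T_\omega^n\alpha_{\omega,n}(x)\bigr)\to 0$ is available. Your argument is essentially the one the paper uses for the restricted class $\mathcal D$ in Proposition~\ref{prop3.1} (where the Koebe principle supplies distortion and branches are onto), but it does not prove (iii) in its stated generality. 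The paper instead works directly from the conditional-entropy formula \eqref{eq22b}: it introduces a fiberwise transfer operator $\mathcal L_{\hat\omega}$, identifies $\mathbb E_{\hat\mu_{\hat\omega}}(1_A\mid\sigma(T_{\hat\omega_1}^{-1}\alpha_{\hat\tau\hat\omega,n}))$ with a conditional expectation of $\mathcal L_{\hat\omega}1_A$ downstairs, passes to the limit by L\'evy's upward theorem using (r5), and then a change of variables plus $\hat F$-invariance cancels the density terms and leaves $\int\log JT_{\hat\omega_1}\,d\hat\mu$. No distortion enters anywhere.

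Two further points. In (i) you justify trading $\mu_\omega$ for $\lambda$ by asserting that the standing assumptions force the fibre densities to be bounded above and away from zero; they do not --- (r6) only gives equivalence of $\mu$ with $\mathbf P\times\lambda$, hence $\mu_\omega\sim\lambda$ with an a.e.\ positive, finite density, and the paper even remarks that the boundedness in (n2) is \emph{dropped} in the random setting. The correct repair, which the paper uses, is the Lebesgue differentiation theorem: at a density point $x$ of $d\hat\mu_{\hat\omega}/d\lambda$ with positive value, the ratio $\hat\mu_{\hat\omega}(\alpha_{\hat\omega,n}(x))/\lambda(\alpha_{\hat\omega,n}(x))$ converges to a finite positive limit, which vanishes after $-\tfrac1n\log$. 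You also need to pass to the two-sided extension $\hat F$ on $I^{\mathbb Z}\times[0,1)$ before invoking the random SMB theorem and the fibre-entropy machinery, since those results require an invertible base. Relatedly, in (ii) the identity ``fibre entropy $=$ $h_\mu(F\mid\pi^{-1}\mathcal B_{I^{\mathbb N}})$'' does not hold ``by construction'': the fibre entropy is defined through $\hat F$, and the real content of the paper's proof of (ii) is showing $h_{\hat\mu}(\hat F)=h_\mu(F)$ (via an exhausting sequence of finite partitions pushed forward under $\hat F^{-n-1}$ so that they only involve nonnegative coordinates) and $h_{\hat\nu}(\hat\tau)=h_\nu(\tau)$ (natural extension), after which the Abramov--Rokhlin formula \eqref{q:AR} gives the claim. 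Your outline names the right formula but omits the step that actually carries the argument.
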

The first part of this theorem gives the required estimates for the asymptotic sizes of the cylinder sets from \eqref{q:cylindersr} and, when combined with Theorem~\ref{t:partitions}, leads to Theorem~\ref{t:randomlochs}. The limit from Theorem~\ref{t:randomlochs} is expressed in terms of the fiber entropies of the two random number systems. Parts (ii) and (iii) of Theorem~\ref{t:main} give different ways to determine this limit. The second part works in case the entropy of the marginal of $\mu$ on $I^\mathbb N$ is finite. The third part gives a random version of Rokhlin's formula for entropy.

\vskip .2cm
We also prove a Central Limit Theorem for Theorem~\ref{t:randomlochs} in case we compare the digits obtained from a random number system $\mathcal T = (I, \mathbf P, \{T_i\}_{i \in I}, \mu, \{ \alpha_i\}_{i \in I})$ to those from an NTFM $(S, \tilde\mu, \mathcal E)$ under additional assumptions on both systems. To be more specific, for such systems we obtain that for all $u \in \mathbb R$,
\begin{equation}\label{q:randomclt}
\lim_{n \rightarrow \infty} \mu\Big(\Big\{(\omega,x) \in I^\mathbb N \times [0,1): \frac{m_{\mathcal T,S}(n,\omega,x) - n\frac{h^{\text{fib}}(\mathcal T)}{h_{\tilde \mu}(S)}}{\kappa \sqrt n} \leq u\Big\}\Big) = \frac{1}{\sqrt{2 \pi}} \int_{-\infty}^u e^{-t^2/2} dt,
\end{equation}
for an appropriate constant $\kappa>0$.

\vskip .2cm

\vskip .2cm
The article is outlined as follows. In Section~\ref{s:prel} we give a precise definition of random number systems and fiber entropy and we provide some preliminaries. In Section~\ref{s:main} we prove Theorem~\ref{t:main} and obtain Theorem~\ref{t:randomlochs} as a corollary and we prove the Central Limit Theorem from \eqref{q:randomclt}. In Section~\ref{s:examples} we provide some examples.

\section{Preliminaries}\label{s:prel}

\subsection{Partitions and entropy}
We first quickly recall the definition of measure theoretic entropy, starting with partitions. Let $(X, \mathcal F,m)$ be an arbitrary measure space. We call a collection $\mathcal P$ a {\em partition} of $(X, \mathcal F,m)$ if it is an at most countable collection of measurable sets, $\mathcal P \subseteq \mathcal F$, that are pairwise disjoint and satisfy $X = \bigcup_{P \in \mathcal P} P$, where both properties are considered modulo $m$-null sets. If $\mathcal P$ is a partition of $X$ and $x \in X$, then we denote by $\mathcal P(x)$ the partition element of $\mathcal P$ containing $x$. For two partitions $\mathcal P_1$ and $\mathcal P_2$ of $(X, \mathcal F,m)$ we use the notation $\mathcal P_1 \le \mathcal P_2$ to indicate that $\mathcal P_2$ is a {\em refinement} of $\mathcal P_1$, i.e., that for every $P \in \mathcal P_2$ there is a $Q \in \mathcal P_1$ such that $P \subseteq Q$. Moreover, we use $\mathcal P_1 \vee \mathcal P_2 := \{ P \cap Q \, : \, P \in \mathcal P_1, \, Q \in \mathcal P_2 \}$ to denote the {\em common refinement} of $\mathcal P_1$ and $\mathcal P_2$.

\vskip .2cm
The {\em entropy} of a partition $\mathcal P$ with respect to the measure $m$ is defined as
\[ H_m(\mathcal P) = - \sum_{P \in \mathcal P} m(P) \log m(P).\]
If $T: X \to X$ is measurable and measure preserving with respect to $m$, then the {\em entropy of $\mathcal P$ with respect to the transformation $T$} is defined as
\[ h_m (\mathcal P, T) := \lim_{n \to \infty} \frac1n H_m \Big( \bigvee_{k=0}^{n-1} T^{-k} \mathcal P \Big)\]
and the {\em measure theoretic entropy} of $T$ with respect to $m$ is given by
\[ h_m(T) := \sup_{\mathcal P} h_m(\mathcal P, T),\]
where the supremum is taken over all partitions $\mathcal P$ satisfying $H_m (\mathcal P) < \infty$. For any collection of measurable sets $\mathcal E \subseteq \mathcal F$ we use $\sigma(\mathcal E)$ to denote the smallest $\sigma$-algebra containing $\mathcal E$. We say that $\mathcal E$ {\em generates} $\mathcal F$ if $\sigma(\mathcal E)= \mathcal F$ up to sets of $m$-measure zero. The Kolmogorov-Sinai Theorem facilitates the computation of measure theoretic entropy, since it says that $h_m(T) = h_m(\mathcal P,T)$ for any partition $\mathcal P$ of finite entropy that generates the $\sigma$-algebra $\mathcal F$. More generally, the following result, which can be found in e.g.~\cite[Proposition 9.3.1]{kalle2}, holds.

\begin{lemma}\label{l:increasingentropy}
If $\mathcal P_1 \le \mathcal P_2 \le \ldots$ is an increasing sequence of finite partitions of $(X, \mathcal F,m)$ such that $\sigma(\lim_{n \rightarrow \infty} \mathcal P_n) = \mathcal F$ up to sets of $m$-measure zero, then $h_m(T) = \lim_{n \to \infty} h_m(\mathcal P_n, T)$.
\end{lemma}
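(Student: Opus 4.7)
The plan is to prove the result by establishing two inequalities separately, the easier one using monotonicity of $h_m(\cdot, T)$ in its partition argument and the harder one combining a standard entropy inequality with a conditional-entropy convergence obtained from the martingale theorem.

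First I would record the easy half: because $\mathcal P_n \le \mathcal P_{n+1}$, refining yields $h_m(\mathcal P_n, T) \le h_m(\mathcal P_{n+1}, T)$, and every $h_m(\mathcal P_n, T)$ is at most $h_m(T)$ by definition of the supremum. Hence $\lim_{n\to\infty} h_m(\mathcal P_n, T)$ exists in $[0,\infty]$ and is bounded above by $h_m(T)$.

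For the reverse inequality, I would fix any partition $\mathcal Q$ with $H_m(\mathcal Q) < \infty$ and use the standard inequality
\[ h_m(\mathcal Q, T) \le h_m(\mathcal P_n, T) + H_m(\mathcal Q \mid \mathcal P_n), \]
which follows from the subadditivity $H_m(\mathcal Q \vee \mathcal P_n) \le H_m(\mathcal P_n) + H_m(\mathcal Q \mid \mathcal P_n)$ applied to the pullbacks $T^{-k}(\mathcal Q \vee \mathcal P_n) = T^{-k}\mathcal Q \vee T^{-k}\mathcal P_n$ and the $T$-invariance of conditional entropy together with the monotonicity of $h_m(\cdot, T)$ under refinement. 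The crux is then to let $n \to \infty$ on the right-hand side, which requires showing that $H_m(\mathcal Q \mid \mathcal P_n) \to 0$.

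The key step, and the most delicate one, is this conditional entropy convergence. Write $\mathcal F_n := \sigma(\mathcal P_n)$, so that $\mathcal F_1 \subseteq \mathcal F_2 \subseteq \cdots$ and, by hypothesis, $\sigma(\bigcup_n \mathcal F_n) = \mathcal F$ (modulo $m$-null sets). For each atom $Q \in \mathcal Q$ the conditional expectations $\mathbb E_m[\mathbf 1_Q \mid \mathcal F_n]$ form a martingale converging $m$-a.e.\ and in $L^1(m)$ to $\mathbb E_m[\mathbf 1_Q \mid \mathcal F] = \mathbf 1_Q$, by Doob's increasing martingale theorem. Writing $H_m(\mathcal Q \mid \mathcal P_n) = \int \varphi(\mathbb E_m[\mathbf 1_Q \mid \mathcal F_n])\, dm$ summed over $Q \in \mathcal Q$, where $\varphi(t) = -t \log t$, I would invoke a dominated convergence argument (the dominating function comes from the maximal inequality applied to the martingale, exactly as in the usual proof of Chung's martingale convergence theorem for conditional entropy) to conclude $H_m(\mathcal Q \mid \mathcal P_n) \downarrow H_m(\mathcal Q \mid \mathcal F) = 0$.

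Combining these pieces yields $h_m(\mathcal Q, T) \le \liminf_{n\to\infty} h_m(\mathcal P_n, T)$ for every $\mathcal Q$ with $H_m(\mathcal Q) < \infty$. Taking the supremum over such $\mathcal Q$ on the left gives $h_m(T) \le \lim_{n\to\infty} h_m(\mathcal P_n, T)$, completing the proof together with the first inequality. The only genuine obstacle is the uniform-integrability input needed to pass the martingale convergence of $\mathbb E_m[\mathbf 1_Q \mid \mathcal F_n]$ through the nonnegative but unbounded function $\varphi$; for a countable $\mathcal Q$ with $H_m(\mathcal Q) < \infty$ this is precisely the standard strengthening due to Chung, which I would cite rather than rederive.
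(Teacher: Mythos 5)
Your argument is correct and is the standard proof of this statement; the paper itself gives no proof, deferring instead to \cite[Proposition 9.3.1]{kalle2}, and your write-up is essentially the textbook argument that such a reference contains. Both halves are sound: the monotonicity half is immediate since each finite $\mathcal P_n$ has $H_m(\mathcal P_n)<\infty$, and the reverse half correctly reduces to showing $H_m(\mathcal Q\mid \mathcal P_n)\to 0$ via the increasing martingale theorem applied to $\mathbb E_m[\mathbf 1_Q\mid\sigma(\mathcal P_n)]$, with Chung's uniform-integrability lemma supplying the domination needed for countably infinite $\mathcal Q$ with $H_m(\mathcal Q)<\infty$ (for finite $\mathcal Q$ the boundedness of $t\mapsto -t\log t$ on $[0,1]$ already suffices).
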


\subsection{Random number systems}\label{subsec2.2}
We now define the dynamical systems that we are interested in. Let $I$ be a Polish space with Borel $\sigma$-algebra $\mathcal B_I$. Let $\tau$ denote the left shift on $I^\mathbb N$ and $\mathbf P$ a Borel probability measure on the product $\sigma$-algebra $\mathcal B_I^\mathbb N$ such that $\tau$ is non-singular with respect to $\mathbf{P}$, i.e.~$\mathbf{P}(\tau^{-1} A) = 0$ if $\mathbf{P}(A) = 0$ for all $A \in \mathcal B_I^{\mathbb{N}}$. We call $(I^\mathbb N, \mathcal B_I^\mathbb N, \mathbf P)$ the {\em base space}. For $i_1, \ldots, i_n \in I$, $n \ge 1$, we let
\[ [i_1, i_2, \ldots, i_n] = \{ \omega \in I^\mathbb N \, : \, \omega_j = i_j, \, 1 \le j \le n \}.\]
For each $i \in I$, let $T_i : [0,1) \to [0,1)$ be a Borel measurable transformation. Let $\mathcal B$ denote the Borel $\sigma$-algebra on $[0,1)$ and $\lambda$ the Lebesgue measure on $[0,1)$. Associated to the family $\{ T_i :[0,1) \to [0,1) \}_{i \in I}$ is the {\em skew product transformation} or {\em random map}
\[ F: I^\mathbb N \times [0,1) \to  I^\mathbb N \times [0,1), \, (\omega,x) \mapsto \big( \tau \omega, T_{\omega_1}(x) \big).\]
Let $\mu$ be an invariant probability measure for $F$ on $I^{\mathbb{N}} \times [0,1)$. For each $i \in I$ let $\alpha_i = \{ A_{i,0}, A_{i,1}, \ldots\}$ be a partition of $[0,1)$ by countably many subintervals of $[0,1)$, possibly containing empty sets. (For notational convenience we add countably many empty sets to $\alpha_i$ in case it would naturally be a finite set.) A {\em random number system} is a collection $\mathcal{T}=(I, \mathbf P, \{T_i\}_{i \in I},\mu, \{\alpha_i\}_{i \in I})$ on $[0,1)$ that satisfies the following conditions.
\begin{itemize}
\item[(r1)] The map $I \times [0,1) \ni (i,x) \mapsto T_i (x) \in [0,1)$ is measurable.
\item[(r2)] For each $i \in I$ and $A \in \alpha_i$, $T_i|_{A}$ is strictly monotone and continuous.
\item[(r3)] The partition $\Delta = \{ \Delta(j) \, : \, j \ge 0 \}$ of $I^\mathbb N \times [0,1)$ given by \begin{equation}\label{q:delta}
\Delta(j) = \{ (\omega,x) \, : \, x \in A_{\omega_1,j} \} = \bigcup_{i \in I} [i] \times A_{i,j} \quad \text{ for each } j \ge 0
\end{equation}
is measurable, i.e., $\Delta(j) \in \mathcal B_I^\mathbb N \times \mathcal B$ for all $j \ge 0$.
\item[(r4)] For $\mathbf P$-a.a.~$\omega \in I^{\mathbb{N}}$ we have that, for all $B \in \mathcal{B}$, $\lambda\big(T_{\omega_1}^{-1}B\big) = 0$ if $\lambda(B) = 0$.
\item[(r5)] For $\mathbf P$-a.a.~$\omega \in I^{\mathbb{N}}$ we have $\sigma\big(\displaystyle \lim_{n \to \infty}\alpha_{\omega,n}\big) = \mathcal{B}$ up to sets of $\lambda$-measure zero.
\item[(r6)] The $F$-invariant measure $\mu$ is ergodic and equivalent to $\mathbf P \times \lambda$.
\item[(r7)] The entropy of $\Delta$ with respect to $\mu$ is finite.
\end{itemize}

Most of the conditions (r1)--(r7) are easily verified in specific applications and not very restrictive. We give some comments on them.
\vskip .1cm
- Conditions (r1) and (r3) are typical measurability conditions and are immediate in case $I$ is at most countable (and equipped with the discrete topology). It easily follows from (r1) that the random map $F$ is measurable.
\vskip .1cm
- Condition (r2) is needed to get digit sequences $(d_n^\mathcal T (\omega,x))_{n \ge 1}$ as in \eqref{q:digitsequence}. It follows from (r5) that, for $\mathbf P$-a.a.~$\omega \in I^{\mathbb{N}}$, knowing $(d_n^{\mathcal T}(\omega,x))_{n \geq 1}$ determines $x \in [0,1)$ uniquely $\lambda$-a.e.
\vskip .1cm
- Condition (r4) is a form of fiberwise non-singularity and from (r6) it follows that $\mu$ is the only probability measure that is both $F$-invariant and absolutely continuous w.r.t.~$\mathbf P \times \lambda$. In case $I$ is countable, then it is easy to verify that (r4) already follows from only assuming (r6).
\vskip .1cm
- If we let $\pi_I: I^\mathbb N \times [0,1) \to I^\mathbb N$ be the canonical projection onto the first coordinate and write $\nu = \mu \circ \pi_I^{-1}$ for the marginal of the invariant measure $\mu$ on $I^\mathbb N$, then from (r6) it follows that $\nu$ is $\tau$-invariant, ergodic and equivalent to $\mathbf P$. In particular, if $\mathbf P$ is $\tau$-invariant, then $\nu=\mathbf P$.
\vskip .1cm
- Condition (r7) guarantees that the fiber entropy defined later on is well defined. Note that if $\Delta$ is a finite set, then (r7) is automatically satisfied.

\vskip .2cm

One of the consequences of (r1)--(r7) is that each random number system admits a {\em system of conditional measures}, i.e., a family of probability measures $\{\mu_{\omega}\}_{\omega \in I^\mathbb N}$ such that
\begin{itemize}
\item $\mu_{\omega}$ is a probability measure on $([0,1),\mathcal{B})$ for $\nu$-a.a.~$\omega \in I^{\mathbb N}$,
\item for any $f \in L^1(\mu)$ the map $I^{\mathbb N} \ni \omega \mapsto \int_{[0,1)} f(\omega,x) \, d\mu_{\omega}(x)$ is measurable and
\begin{equation}\label{q:system}
\int_{I^\mathbb N \times [0,1)} f \, d\mu = \int_{I^{\mathbb N}} \Big(\int_{[0,1)} f(\omega,x) \, d\mu_{\omega}(x)\Big) d\nu(\omega).
\end{equation}
\end{itemize}
Moreover, if $\{\tilde{\mu}_{\omega}\}_{\omega \in I^{\mathbb N}}$ is another system of conditional measures for $\mu$, then $\mu_{\omega} = \tilde{\mu}_{\omega}$ for $\nu$-a.a.~$\omega \in I^{\mathbb N}$. (See \cite[Theorem 1.0.8]{Aar97} together with \cite[Proposition 5.1.7]{foundations} for a justification.)

\vskip .2cm

We now present some classes of systems for which the assumptions from the definition of random number system are satisfied.
\vskip .1cm
- A class of maps that satisfy (r2) and (r4) and that are well studied in literature is the class of {\em Lasota-Yorke type maps}. A Lasota-Yorke type map is a map $T:[0,1) \to [0,1)$ that is piecewise monotone $C^2$ and non-singular with $|DT(x)| >0$ for all $x$ where the derivative is defined. In that case an obvious candidate for the partition from (r2) is the partition of $[0,1)$ given by the maximal intervals on which $T$ is monotone.
\vskip .1cm
- Given a family $\{ T_i\}_{i \in I}$ of Lasota-Yorke type maps for some appropriate index set $I$, a sufficient condition for $\Delta$ to be a generator in the sense of (r5) is that $\inf_{(i,x)} |DT_i(x)| > 1$. In case $I$ is finite, this is equivalent to the condition that each $T_i$ is expanding. We can allow for neutral fixed points as well and still get (r5) if we assume that the branches of the maps are full, i.e., map onto the whole interval $(0,1)$, and expanding outside each neighborhood of the neutral fixed point. Examples include the Gauss-R\'enyi map from \cite{KKV17} that we will encounter in Example \ref{x:gr} below and Manneville-Pomeau type maps as in e.g.~\cite{ManPum2,LSV}.
\vskip .1cm
- There exist various sets of conditions under which the existence of an invariant measure $\mu$ for the skew product $F$ that satisfies (r6) is guaranteed. See e.g.~\cite{pelikan,morita1,GB03,BG06,Ino} for results in this direction where it is assumed that the random system under consideration is expanding on average. For the random map composed of LSV maps, i.e.~the Manneville-Pomeau type maps introduced in \cite{LSV}, results about the existence of an invariant measure $\mu$ that satisfies (r6) can be found in e.g.~\cite{bahsoun3,dijk,zeegers}.
\vskip .1cm
- The results from \cite{KM21} give an algorithm for determining explicit formulae for invariant probability measures of the form $\mathbf P \times \rho$ with $\rho \ll \lambda$ in case all maps $T_i$ are piecewise linear Lasota-Yorke type maps satisfying some further conditions. Having an explicit formula facilitates the computation of the entropy of $\Delta$ and the verification of (r7).

\begin{remark}{\rm
If $I$ consists of only one element, then the random number system reduces to an interval map. In this case, conditions (r2), (r5) and (r7) are equivalent with assuming that (n1), (n3) and (n4) hold for this interval map, respectively. Moreover, it follows from (r6) that the interval map is onto $[0,1)$ up to some $\lambda$-measure zero set and it follows from (r6) that this interval map satisfies (n2) except that the density does not necessarily have to be bounded and bounded away from zero. Thus in particular, each NTFM is an example of a random number system where the index set consists of only one element. Hence, Theorem \ref{t:randomlochs} is an extension of the result in \eqref{q:df} from \cite{dajani01} and shows that \eqref{q:df} remains true for two NTFM's for which the condition in (n2) on the bounds on the density are dropped.
}\end{remark}

\subsection{Invertible base maps and invariant measures}

The dynamics on the base space $I$ of a random number system is given by the left shift $\tau$ on the set $I^\mathbb N$, which is not invertible. This setup corresponds to the setup for random systems associated to number expansions that is adopted in most of the references mentioned in the introduction. To prove Theorem~\ref{t:randomlochs}, however, we employ known theory on random systems and fiber entropy that is available for skew products with invertible dynamics on the base space. One can easily extend the one-sided shift in the first coordinate of $F$ to a two-sided (thus invertible) shift and as we shall see next, this has no profound effect on the invariant measures.
\vskip .2cm

Let $\hat \tau$ denote the left shift on $I^\mathbb Z$ and extend the skew product $F$ to a map $\hat F$ that is invertible in the first coordinate by setting
\[ \hat F: I^{\mathbb{Z}} \times [0,1) \rightarrow I^{\mathbb{Z}} \times [0,1),  \,(\hat \omega,x) \mapsto (\hat \tau (\hat \omega), T_{\hat \omega_1} (x)).\]
Let $\mathcal B_I^{\mathbb Z}$ denote the Borel $\sigma$-algebra on $I^\mathbb Z$. Use $\pi: I^\mathbb Z \to I^\mathbb N$ to denote the canonical projection. To keep notation simple for two-sided sequences $\hat \omega \in I^\mathbb Z$ and $n \ge 0$ we use the same notation for $T_{\hat \omega}^n$, $\alpha_{\hat \omega,n}$ and $\mu_{\hat \omega}$ as for one-sided sequences, i.e., 
\[ T_{\hat \omega}^n = T_{\pi(\hat \omega)}^n, \quad \alpha_{\hat \omega,n} = \alpha_{\pi(\hat \omega),n}, \quad \mu_{\hat \omega} = \mu_{\pi(\hat \omega)}.\]
The skew product $\hat F$ is measurable due to (r1). The next proposition gives a relation between the (ergodic) invariant measures of $F$ and those of $\hat F$. It can be found in a slightly more restrictive setting in Appendix A of \cite{ale-jan} (see also the references therein), but the proof carries over unchanged to our setting. We reproduce the statement here for our setting for convenience. Use $\hat\pi_I: I^\mathbb Z \times [0,1) \to I^\mathbb Z$ and $\Pi: I^\mathbb Z \times [0,1) \to I^\mathbb N \times [0,1)$ to denote the respective canonical projections.

\begin{prop}{\cite[Proposition A.1 and Remark A.4]{ale-jan}}\label{p:alejan}
Let $\mu$ be an $F$-invariant probability measure with marginal $\nu = \mu \circ \pi_I^{-1}$ and system of conditional measures $\{\mu_{\omega}\}_{\omega \in I^\mathbb N}$. Then the following statements hold.
\begin{itemize}
\item[(i)] There exists an $\hat F$-invariant probability measure $\hat \mu$ with marginal $\hat{\nu} = \hat \mu \circ \hat\pi_I^{-1}$ and a system of conditional measures $\{\hat \mu_{\hat \omega}\}_{\hat \omega \in I^\mathbb Z}$ such that, for $\hat{\nu}$-a.a.~$\hat \omega \in I^{\mathbb{Z}}$,
\[ \hat \mu_{\hat \omega}(B) = \lim_{n \rightarrow \infty} \mu_{\hat \tau^{-n} \hat \omega}\Big((T^n_{\hat \tau^{-n} \hat \omega})^{-1}(B)\Big), \qquad B \in \mathcal{B}.\]
\item[(ii)] Conversely, let $\hat \mu$ be an $\hat F$-invariant probability measure with marginal $\hat \nu = \hat \mu \circ \hat\pi_I^{-1}$. Then the probability measure
\[ \tilde \mu = \hat \mu \circ \Pi^{-1}\]
is $F$-invariant and has marginal $\tilde \nu = \hat \nu \circ \pi^{-1}$.
\item[(iii)] The correspondence $\mu \leftrightarrow \hat\mu$ given by (i) and (ii) is one-to-one and has the property that $\mu$ is ergodic for $F$ if and only if $\hat{\mu}$ is ergodic for $\hat F$.
\end{itemize}
\end{prop}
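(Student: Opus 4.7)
The plan is to carry out the standard Rokhlin natural-extension construction adapted to the skew-product setting. First, let $\hat{\nu}$ be the natural (invertible) extension of $(\tau, \nu)$: the unique $\hat{\tau}$-invariant probability on $(I^{\mathbb{Z}}, \mathcal B_I^{\mathbb Z})$ with $\hat{\nu} \circ \pi^{-1} = \nu$. For each fixed $B \in \mathcal B$ set
\[
\phi_n^B(\hat{\omega}) := \mu_{\hat{\tau}^{-n}\hat{\omega}}\big((T_{\hat{\tau}^{-n}\hat{\omega}}^n)^{-1}(B)\big), \qquad n \ge 1.
\]
The key step is to show that $(\phi_n^B)_{n \ge 1}$ is a bounded martingale with respect to the increasing filtration $\mathcal F_n := \sigma(\hat{\omega}_k : k \ge -n+1)$ on $I^{\mathbb{Z}}$. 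The martingale identity $\mathbb{E}[\phi_{n+1}^B \mid \mathcal F_n] = \phi_n^B$ reduces to the fiberwise relation
\[
\mu_{\omega'}(C) = \int_I \mu_{(j,\omega')}(T_j^{-1}C)\,d\nu_{\omega'}(j),
\]
valid for $\nu$-a.e.\ $\omega'$, where $\nu_{\omega'}$ disintegrates $\nu$ over $\tau$; this is just the $F$-invariance of $\mu$ written at the level of conditional measures. Doob's theorem then yields the pointwise and $L^1$-limit $\hat{\mu}_{\hat{\omega}}(B)$ for $\hat{\nu}$-a.e.\ $\hat{\omega}$.

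Next, I would promote $B \mapsto \hat{\mu}_{\hat{\omega}}(B)$ from a countable generating algebra of $\mathcal B$ to a full Borel probability measure on $[0,1)$ by a standard monotone-class / Carath\'eodory argument, absorbing the countably many exceptional sets into a single $\hat{\nu}$-null set. Put $\hat{\mu} := \int \hat{\mu}_{\hat{\omega}} \, d\hat{\nu}(\hat{\omega})$. To verify $\hat F$-invariance I would work at the fiber level: a direct reindexing gives $\phi_n^B(\hat{\tau}\hat{\omega}) = \phi_{n-1}^{T_{\omega_1}^{-1}B}(\hat{\omega})$, and passing to the limit produces $(T_{\omega_1})_*\hat{\mu}_{\hat{\omega}} = \hat{\mu}_{\hat{\tau}\hat{\omega}}$, which is exactly the skew-product invariance. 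For the projection identity $\hat{\mu} \circ \Pi^{-1} = \mu$ I would test against cylinders $A' \subseteq I^{\mathbb N}$: the $F^n$-invariance of $\mu$ together with $\pi \circ \hat{\tau}^n = \tau^n \circ \pi$ and $\pi_*\hat{\nu} = \nu$ yields
\[
\int_{\pi^{-1}(A')} \phi_n^B \, d\hat{\nu} = \mu(A' \times B) \quad \text{for every } n,
\]
which passes to the limit. This simultaneously gives the marginal condition and confirms the convergence formula in (i).

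Part (ii) is then formal: from $\Pi \circ \hat F = F \circ \Pi$ one obtains $F$-invariance of $\tilde{\mu} = \hat{\mu} \circ \Pi^{-1}$, and its marginal is $\hat{\nu} \circ \pi^{-1}$ by composing projections. For part (iii) the bijection follows from uniqueness of the $L^1$-limit: applying (i) to the projection $\tilde{\mu}$ of any $\hat F$-invariant $\hat{\mu}$ must reconstruct the same $\hat{\mu}$, because the backward limit is the unique $\hat F$-equivariant lift of $\tilde{\mu}$ with the correct conditional structure. Ergodicity transfer is standard: an $F$-invariant set $A$ produces the $\hat F$-invariant set $\Pi^{-1}(A)$ of equal measure, giving one direction; conversely, any $\hat F$-invariant function is, by the defining limit formula, $\hat{\nu}$-a.s.\ the $\Pi$-lift of a $\mu$-a.s.\ $F$-invariant function, so that $F$-ergodicity of $\mu$ forces $\hat F$-ergodicity of $\hat{\mu}$.

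The main technical obstacle I anticipate is the careful bookkeeping of disintegrations when $I$ is an uncountable Polish space. Both $\nu_{\omega'}$ and $\mu_{\omega}$ are defined only up to $\nu$-null sets, so one must ensure that the countably many exceptional null sets (one per set in a generating algebra, plus those arising from the fiberwise invariance identity and from the measurability of the map $(\omega, B) \mapsto \mu_\omega(B)$) can be merged into one $\hat{\nu}$-null set. This uses the standard-Borel-space version of Rokhlin's disintegration theorem in an essential way, and is precisely what makes a fully rigorous argument substantially longer than the sketch above; it is also the reason that the authors prefer to cite the existing treatment in \cite{ale-jan}.
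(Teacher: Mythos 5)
The paper does not actually prove this proposition: it is imported verbatim from \cite{ale-jan} (Proposition A.1 and Remark A.4), with the remark that the proof there ``carries over unchanged''. So there is no in-paper argument to compare against; what you have written is a reconstruction of the standard construction used in that reference (and in Arnold's treatment of one-sided versus two-sided invariant measures for random dynamical systems), and in outline it is correct: take the natural extension $\hat\nu$ of $(\tau,\nu)$, observe that $\phi_n^B$ is $\mathcal F_n$-measurable with $\mathcal F_n=\sigma(\hat\omega_k:k\ge 1-n)$, verify the martingale identity from $F$-invariance (your fiberwise identity is equivalent to integrating $\phi_{n+1}^B$ over $p_n^{-1}E'$ and applying $\mu(F^{-1}\cdot)=\mu(\cdot)$ once, which avoids having to quantify the disintegration identity over the $\hat\omega$-dependent sets $C$), use Doob, and then check equivariance via the reindexing $\phi_n^B(\hat\tau\hat\omega)=\phi_{n-1}^{T_{\hat\omega_1}^{-1}B}(\hat\omega)$ and the projection identity via $\int_{\pi^{-1}A'}\phi_n^B\,d\hat\nu=\mu(F^{-n}(A'\times B))=\mu(A'\times B)$. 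Your argument for (iii) via injectivity of $\hat\mu\mapsto\hat\mu\circ\Pi^{-1}$ plus convexity is also the right mechanism; note that the injectivity step itself needs the equivariance $\hat\mu_{\hat\omega}\circ T_{\hat\omega_1}^{-1}=\hat\mu_{\hat\tau\hat\omega}$ together with L\'evy's upward theorem applied to $\mathbb E_{\hat\nu}[\hat\mu_{\cdot}(B)\mid\mathcal F_n]$, and that an $\hat F$-invariant function need not literally be a $\Pi$-lift (it may depend on negative coordinates), so the ergodic-decomposition phrasing is safer than the ``lift of an invariant function'' phrasing.

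The one step I would not let pass as written is the promotion of $B\mapsto\hat\mu_{\hat\omega}(B)$ from a countable generating algebra to a Borel probability measure ``by a standard monotone-class / Carath\'eodory argument''. A pointwise limit of probability measures on an algebra is only finitely additive, and a finitely additive charge on the interval algebra of $[0,1)$ need not extend to a countably additive Borel measure, so Carath\'eodory alone does not apply. The standard repairs are either to run Doob's theorem on a countable dense family of continuous functions and invoke Riesz representation fiberwise, or (cleaner, and closer to what \cite{ale-jan} does) to build $\hat\mu$ globally first as an inverse limit of the consistent family of measures on $\sigma(\hat\omega_k:k\ge 1-n)\times\mathcal B$ obtained by pushing $\mu$ forward $n$ steps, and only then disintegrate over $\hat\nu$ to recover the formula in (i). With that adjustment, and with the null-set bookkeeping you already flag handled via the standard Borel structure of $I^{\mathbb Z}$ and $[0,1)$, your sketch is a faithful proof of the quoted result.
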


From Proposition \ref{p:alejan} and (r6) we obtain an $\hat F$-invariant and ergodic probability measure $\hat{\mu}$ with a system of conditional measures $\{\hat{\mu}_{\hat \omega}\}_{\hat \omega \in I^\mathbb Z}$ for the marginal $\hat{\nu} = \hat \mu \circ \hat\pi_I^{-1}$. The following lemma will be used later.

\begin{lemma}\label{l:absct}
For $\hat \nu$-a.e.~$\hat \omega \in I^\mathbb Z$ it holds that $\hat \mu_{\hat \omega} \ll \lambda$. Moreover, $\hat \mu \ll \hat \nu \times \lambda$ and for $\hat \nu \times \lambda$-a.e.~$(\hat \omega,x) \in I^\mathbb Z \times [0,1)$ we have
\[ \frac{d\hat \mu}{d\hat \nu \times \lambda} (\hat \omega,x) = \frac{d\hat \mu_{\hat \omega}}{d\lambda} (x).\]
\end{lemma}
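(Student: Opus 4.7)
The plan is to handle the three claims in the order stated: first $\hat\mu_{\hat\omega} \ll \lambda$, then $\hat\mu \ll \hat\nu \times \lambda$, and finally the density identification. Both absolute-continuity statements reduce, once one knows their one-sided analogue, to manipulations of the disintegration formula \eqref{q:system}; the one-sided analogue is essentially free from (r6).

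First I would upgrade (r6) to the statement that $\mu_\omega \ll \lambda$ for $\nu$-a.e.\ $\omega$. Since $\nu \sim \mathbf P$ (noted after (r6)) and $\mu \sim \mathbf P \times \lambda$, we have $\mu \ll \nu \times \lambda$ with some density $g$. Inserting the resulting Fubini expression for $\int f\,d\mu$ into \eqref{q:system} and using uniqueness of the system of conditional measures $\{\mu_\omega\}_\omega$ forces $d\mu_\omega/d\lambda = g(\omega,\cdot)$ for $\nu$-a.e.\ $\omega$, and in particular $\mu_\omega \ll \lambda$.

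Next I would transfer this to the two-sided system via the explicit formula in Proposition~\ref{p:alejan}(i),
\[ \hat\mu_{\hat\omega}(B) = \lim_{n \to \infty} \mu_{\hat\tau^{-n}\hat\omega}\big((T^n_{\hat\tau^{-n}\hat\omega})^{-1}(B)\big), \]
and show that each term on the right vanishes whenever $\lambda(B) = 0$. Two ingredients feed in here: the $n$-fold iteration of (r4) along $\pi(\hat\tau^{-n}\hat\omega)$ shows that $(T^n_{\hat\tau^{-n}\hat\omega})^{-1}$ preserves $\lambda$-null sets, and the first step then forces the $\mu_{\hat\tau^{-n}\hat\omega}$-measure of such preimages to vanish. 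Both statements are a.s.\ statements with respect to $\nu$ on the forward coordinates, but Proposition~\ref{p:alejan}(ii)--(iii) identifies the marginal of $\hat\nu$ under the canonical projection $\pi$ with $\nu$, and $\nu$ is $\tau$-invariant, so a countable intersection over $n \ge 0$ gives a single $\hat\nu$-full set of $\hat\omega$ on which the conclusion holds simultaneously for every $n$.

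Once $\hat\mu_{\hat\omega} \ll \lambda$ is established, absolute continuity $\hat\mu \ll \hat\nu \times \lambda$ drops out of the disintegration: if $(\hat\nu \times \lambda)(A) = 0$, then $\lambda(A_{\hat\omega}) = 0$ for $\hat\nu$-a.e.\ $\hat\omega$ by Fubini, whence $\hat\mu_{\hat\omega}(A_{\hat\omega}) = 0$ and $\hat\mu(A) = 0$. Letting $h = d\hat\mu/d(\hat\nu \times \lambda)$, I would then express $\int f\,d\hat\mu$ in two ways, via Fubini against $h$ and via the disintegration through $\{\hat\mu_{\hat\omega}\}$, and invoke uniqueness of the latter family (exactly as in the first step) to conclude $d\hat\mu_{\hat\omega}/d\lambda = h(\hat\omega,\cdot)$ for $\hat\nu$-a.e.\ $\hat\omega$. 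The main obstacle I anticipate is the bookkeeping in the transfer step: (r4) and the first step must hold for every $n \ge 0$ on a common $\hat\nu$-full set, and this relies crucially on identifying the forward marginal of $\hat\nu$ with $\nu$ together with the shift-invariance of $\nu$; without both facts the countable intersection over $n$ would have no reason to be non-negligible.
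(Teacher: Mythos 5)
Your proposal is correct and follows essentially the same route as the paper: establish $\mu_\omega \ll \lambda$ from (r6) via the disintegration, transfer to $\hat\mu_{\hat\omega}$ through the limit formula of Proposition~\ref{p:alejan}(i) using the iterated form of (r4) on a common $\hat\nu$-full set, and then read off $\hat\mu \ll \hat\nu \times \lambda$ and the density identity from \eqref{q:system}. The only cosmetic point is that the countable intersection over $n$ is most directly justified by the $\hat\tau$-invariance of $\hat\nu$ (equivalently, non-singularity of the shift) rather than the $\tau$-invariance of $\nu$ alone, but this is exactly the fact the paper invokes and is immediate from $\hat F$-invariance of $\hat\mu$.
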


\begin{proof}
Combining that $\tau$ is non-singular with respect to $\mathbf{P}$ with (r4) gives that for each $n \ge 1$,
\[ \mathbf{P} ( \{ \omega \in I^\mathbb N \, : \,  \exists B \in \mathcal B \quad  \lambda (B) =0 \, \text{ and } \,\lambda ((T_\omega^n)^{-1}B)>0 \} ) =0.\]
Recall that $\nu$ and $\mathbf P$ are equivalent. Furthermore, the measure $\hat \nu$ is invariant with respect to $\hat \tau$ and $\hat \nu \circ \pi^{-1} = \nu$. This gives for each $m \in \mathbb Z$ that
\[ \hat \nu (\hat \tau^{-m} \pi^{-1} \{ \omega \in I^\mathbb N \, : \,  \exists B \in \mathcal B \quad  \lambda (B) =0 \, \text{ and } \,\lambda ((T_\omega^n)^{-1}B)>0 \} ) =0.\]
Taking $m= -n$ it follows for $\hat \nu$-a.e.~$\hat \omega \in I^\mathbb Z$ and $n \geq 1$ that
\begin{equation}\label{q:lambdaTshift}
\lambda(B)=0 \quad \Rightarrow \quad \lambda((T_{\hat \tau^{-n}(\hat \omega)}^n)^{-1}B)=0, \qquad \forall B \in \mathcal{B}.
\end{equation}
The measures $\mu$ and $\nu \times \lambda$ are equivalent by condition (r6). Let $A \in \mathcal B_I^\mathbb N$ and $B \in \mathcal B$. Then by \eqref{q:system}
\[ \int_A \mu_\omega (B) \, d \nu(\omega)  =  \mu(A \times B) = \int_A \int_B \frac{d\mu}{d\nu \times \lambda} (\omega,x) \, d\lambda(x) \, d\nu(\omega),\]
which means that for any $B \in \mathcal B$ we can find a $\nu$-null set $N_B$, such that for all $\omega \in I^\mathbb N \setminus N_B$ we have
\[ \mu_\omega (B) = \int_B \frac{d\mu}{d\nu \times \lambda} (\omega,x) \, d\lambda(x).\]
Since $\mathcal B$ is countably generated, we can find a $\nu$-null set $N$, such that for each $\omega \in I^\mathbb N \setminus N$ and all $B \in \mathcal B$,
\[ \mu_\omega (B) = \int_B \frac{d\mu}{d\nu \times \lambda} (\omega,x) \, d\lambda(x).\]
Hence, $\mu_\omega \ll \lambda$ for $\nu$-a.e.~$\omega \in I^\mathbb N$ and for those $\omega$,
\[ \frac{d\mu_\omega}{d \lambda} (x) = \frac{d\mu}{d\nu \times \lambda} (\omega,x) \qquad \lambda\text{-a.e.}\]
It immediately follows that $\hat \nu$-a.e.~$\hat \omega \in I^\mathbb Z$ satisfies $\mu_{\hat \omega}\ll \lambda$ and since $\hat \nu$ is $\hat \tau$-invariant, we get that for $\hat \nu$-a.e.~$\hat \omega \in I^\mathbb Z$, $\mu_{\hat \tau^{-n} \hat \omega} \ll \lambda$ for each $n$. Combining this with \eqref{q:lambdaTshift} and Proposition~\ref{p:alejan}(i) gives that $\hat \mu_{\hat \omega} \ll \lambda$ for $\hat \nu$-a.e.~$\hat \omega \in I^\mathbb Z$. Since $\{ \hat \mu_{\hat \omega} \}$ is a system of conditional invariant measures, we get that for each $A \in \mathcal B_I^\mathbb Z \times \mathcal B$ (cf.~\eqref{q:system}),
 \[  \hat \mu(A) = \int_{I^\mathbb Z } \hat \mu_{\hat \omega} (A_{\hat \omega}) \, d \hat \nu (\hat \omega)
= \int_A \frac{d\hat \mu_{\hat \omega}}{d\lambda} (x) \, d \hat \nu \times \lambda(\hat \omega,x),\]
where $A_{\hat \omega} = \{ x \in [0,1) \, : \, (\hat \omega,x) \in A \}$. This means that $\hat \mu \ll \hat \nu \times \lambda$ and that for $\hat \nu \times \lambda$-a.e.~$(\hat \omega,x)$ it holds that $\frac{d\hat \mu}{d \hat \nu \times \lambda} (\hat \omega,x) =  \frac{d\hat \mu_{\hat \omega}}{d\lambda} (x)$.
\end{proof}

\subsection{Fiber entropy}
The concept of fiber entropy was introduced in \cite{Abramov2}. Here, as well as in the later works \cite{Branton82} and \cite{morita3}, the entropy of a skew product is studied for the case that the associated transformations are all measure-preserving with respect to the same measure. In \cite{kifer} and \cite{ledrappier88}, the notion fiber entropy is considered for skew products of transformations with a Bernoulli measure on the base space. These two settings are extended in the works \cite{bogenschutz92a} and \cite{bogenschutz92}, where the invariant measure of the skew product admits a system of conditional measures. 

\vskip .2cm
Here we introduce fiber entropy for random number systems following the approach of Bogensch\"utz \cite{Bogenschutz93}. Two standing assumptions in \cite{Bogenschutz93} (and one of the reasons why we extended $F$ to $\hat F$) are that the dynamics on the base space are invertible and that the $\sigma$-algebra considered on the first coordinate is countably generated. By our definition of $\hat F$ and the assumption that $I$ is a Polish space, we satisfy both these assumptions.
 
\vskip .2cm
Consider the sub-$\sigma$-algebra $\mathcal A := \mathcal B_I^\mathbb Z \times [0,1)$ of $\mathcal B_I^\mathbb Z \times \mathcal B$. From $\hat \tau \circ \hat \pi_I = \hat \pi_I \circ \hat F$ we see that $\hat F^{-1} \mathcal A \subseteq \mathcal A$ and in this situation we can define the {\em conditional entropy} of a partition $\mathcal P$ of $I^\mathbb Z \times [0,1)$ given $\mathcal A$ as in \cite{Bogenschutz93} using the conditional expectation by
\[ H_{\hat \mu} (\mathcal P | \mathcal A) = - \int_{I^\mathbb Z \times [0,1)} \sum_{P \in \mathcal P} \mathbb E_\mu (1_P | \mathcal A) \log \mathbb E_\mu (1_P | \mathcal A) \, d\mu.\]
Then $H_{\hat \mu}(\mathcal P | \mathcal A) \le H_{\hat \mu} (\mathcal P)$ (see e.g.~\cite[Lemma 1.2(vi)]{kifer}). The {\em fiber entropy} of the random number system $\mathcal T$ is defined in \cite[Definition 2.2.1]{Bogenschutz93} as
\[ h^{\text{fib}}(\mathcal T) := \sup_{\mathcal P} \lim_{n \to \infty} \frac1n H_{\hat \mu} \Big( \bigvee_{k=0}^{n-1} \hat F^{-k} \mathcal P | \mathcal A \Big),\]
where the supremum is taken over all partitions $\mathcal P$ satisfying $H_{\hat \mu} (\mathcal P | \mathcal A) < \infty$.

\vskip .2cm
As usual for entropy it is often not very practical to compute the fiber entropy of a system directly from the definition. One way to determine $h^{\text{fib}}(\mathcal T)$ follows from the main theorem of \cite{bogenschutz92}, which gives the {\em Abramov-Rokhlin formula}
\begin{equation}\label{q:AR}
h_{\hat \mu}(\hat F) = h_{\hat \nu}(\hat \tau) + h^{\text{fib}}(\mathcal T).
\end{equation}
This leads to an expression for $h^{\text{fib}}(\mathcal T)$ in case $h_{\hat \nu}(\hat \tau) < \infty$. Furthermore, the literature provides two versions of the Kolmorogov-Sinai Theorem for random systems, namely \cite[Lemma II.1.5]{kifer} and \cite[Theorem 2.3.3]{Bogenschutz93}. The first of these results requires a generating partition $\mathcal P$ of the product space $I^\mathbb Z \times [0,1)$ in the sense that
\begin{equation}\label{q:kifergen}
\sigma \Big( \bigvee_{k\ge 0} F^{-k} \mathcal P \Big) \vee \mathcal A = \mathcal B_I^\mathbb Z \times \mathcal B.
\end{equation}
The latter requires a partition $\gamma$ of $[0,1)$ that satisfies
\[ \sigma \Big(  \bigvee_{k\ge 0} (T_{\hat \omega}^k)^{-1} \gamma \Big)= \mathcal B  \qquad \hat \nu\text{-a.e.}\]
For random number systems a natural candidate for a generating partition is provided by the family of partitions $\{ \alpha_i\}_{i \in I}$ and the corresponding partition $\Delta$ on $I^\mathbb N \times [0,1)$ from \eqref{q:delta}. $\Delta$ is easily extended to a partition $\hat \Delta$ of $I^{\mathbb{Z}} \times [0,1)$ by setting
\[ \hat{\Delta} = \{\Pi^{-1} \Delta(j): j \ge 0\}.\]
The property (r5) is not enough to guarantee that the conditions of \cite[Lemma II.1.5]{kifer} are satisfied by $\hat \Delta$ and the conditions of \cite[Theorem 2.3.3]{Bogenschutz93} are not satisfied either, since we consider a family of partitions $\{ \alpha_i\}_{i \in I}$ on $[0,1)$ rather than a single partition $\gamma$. Nevertheless, since from (r7) we have
\begin{equation}\label{q:entropygivenA}
H_{\hat \mu} (\hat \Delta | \mathcal A) \le H_{\hat \mu} (\hat \Delta) = H_\mu (\Delta) < \infty,
\end{equation}
and from Lemma~\ref{l:absct} we know that $\hat \mu_{\hat \omega} \ll \lambda$ holds for $\hat \nu$-a.e.~$\hat \omega \in I^\mathbb Z$, by a reasoning completely analogous to the proof of \cite[Theorem 2.3.3]{Bogenschutz93} we still obtain that
\begin{equation}\label{limit6}
h^{\text{fib}}(\mathcal T) = \lim_{n \to \infty} \frac1n H_{\hat \mu} \Big( \bigvee_{k=0}^{n-1} \hat F^{-k} \hat \Delta | \mathcal A \Big).
\end{equation}
Hence, $\hat \Delta$ serves enough as a generating partition to have a Kolmogorov-Sinai type result and therefore an expression of $h^{\text{fib}}(\mathcal T)$ in terms of $\hat \Delta$. The reason we have put (r5) as a property of random number systems and not a condition like \eqref{q:kifergen} is that compared to condition \eqref{q:kifergen} from \cite[Lemma II.1.5]{kifer}, condition (r5) is easier to verify.

\vskip .2cm
The sequence $(a_n)_{n \in \mathbb{N}}$ given by $a_n = H_{\hat \mu} \Big( \bigvee_{k=0}^{n-1} \hat F^{-k} \hat \Delta | \mathcal A \Big)$ is subadditive (see e.g.~Theorem II.1.1 in \cite{kifer}), thus it follows from Fekete's Subadditive Lemma together with \eqref{limit6} that
\[ h^{\text{fib}}(\mathcal T) = \inf_{n \in \mathbb{N}} \frac1n H_{\hat \mu} \Big( \bigvee_{k=0}^{n-1} \hat F^{-k} \hat \Delta | \mathcal A \Big).\]
In particular, (r7) implies via \eqref{q:entropygivenA} that any random number system $\mathcal T$ has $h^{\text{fib}}(\mathcal T) < \infty$.

\vskip .2cm
Using standard arguments (see e.g.~\cite[Lemma 9.1.12]{foundations}) one can deduce that
\[ \lim_{n \to \infty} \frac1n H_{\hat \mu} \Big( \bigvee_{k=0}^{n-1} \hat F^{-k} \hat \Delta | \mathcal A \Big) = \lim_{n \to \infty} H_{\hat \mu} \Big( \hat \Delta | \sigma \Big( \bigvee_{k=1}^{n-1} \hat F^{-k} \hat \Delta \Big) \vee \mathcal A \Big).\]
This leads to the following expression for the fiber entropy of a random number system $\mathcal T$:
\[ h^{\text{fib}}(\mathcal{T}) = \lim_{n \rightarrow \infty} H_{\hat \mu }\Big(\hat \Delta |\sigma\Big(\bigvee_{k=1}^{n-1} \hat F^{-k} \hat{\Delta}\Big) \vee \mathcal{A}\Big).\]

\vskip .2cm
For a partition $\mathcal P$ of $I^{\mathbb{Z}} \times [0,1)$ we can for each $\hat{\omega} \in I^{\mathbb{Z}}$ obtain a partition $\mathcal P_{\hat{\omega}}$ of $[0,1)$ by intersecting it with the fiber $\{\hat{\omega}\} \times [0,1)$, i.e.,~$\mathcal P_{\hat{\omega}} = \{Z_{\hat{\omega}}: Z \in \mathcal P \}$ where $Z_{\hat{\omega}} = \{x \in [0,1): (\hat{\omega},x) \in Z\}$. With this notation, note that
\[ \Big( \bigvee_{k=0}^{n-1}\hat F^{-k} \hat \Delta \Big)_{\hat \omega} = \alpha_{\hat \omega,n} \quad \text{ and } \quad  \Big( \bigvee_{k=1}^{n-1}\hat F^{-k} \hat \Delta \Big)_{\hat \omega} = T_{\hat{\omega}_1}^{-1} \alpha_{\hat{\tau} \hat{\omega},n}.\]
It now follows from \cite[Lemma 2.2.3]{Bogenschutz93} that
\begin{align}\label{eq20a}
 H_{\hat \mu} \Big( \bigvee_{k=0}^{n-1} \hat F^{-k} \hat \Delta | \mathcal A \Big) = \int_{I^{\mathbb{Z}}} H_{\hat{\mu}_{\hat \omega}}(\alpha_{\hat \omega,n}) \, d\hat{\nu}(\hat \omega)
\end{align}
and
\[ H_{\hat \mu }\Big(\hat \Delta |\sigma\Big(\bigvee_{k=1}^{n-1} \hat F^{-k} \hat{\Delta}\Big) \vee \mathcal{A}\Big) = \int_{I^\mathbb Z} H_{\hat \mu_{\hat \omega}}\big(\alpha_{\hat \omega_1} \Big| \sigma(T_{\hat \omega_1}^{-1} \alpha_{\hat\tau \hat \omega,n})\big) d\hat \nu(\hat \omega).\]
Hence, $h^{\mathrm{fib}}(\mathcal{T})$ can be rewritten as
\begin{align}
h^{\mathrm{fib}}(\mathcal{T}) =\ &  \lim_{n \rightarrow \infty} \frac{1}{n} \int_{I^{\mathbb{Z}}} H_{\hat{\mu}_{\hat \omega}}(\alpha_{\hat \omega,n}) \, d\hat{\nu}(\hat \omega) \label{eq22a}\\
=\ & \lim_{n \rightarrow \infty} \int_{I^\mathbb Z} H_{\hat \mu_{\hat \omega}}\big(\alpha_{\hat \omega_1} \Big| \sigma(T_{\hat \omega_1}^{-1} \alpha_{\hat\tau \hat \omega,n})\big) d\hat \nu(\hat \omega) \label{eq22b}.
\end{align}

\begin{remark}\label{r7}{\rm
Condition (r7) ensures that $H_{\hat \mu} (\hat \Delta | \mathcal A)  < \infty$ as follows from \eqref{q:entropygivenA}, which enables us to rewrite $h^{\mathrm{fib}}(\mathcal{T})$ as in \eqref{limit6}. It follows from \eqref{eq20a} that $H_{\hat \mu} (\hat \Delta | \mathcal A)  < \infty$ is also ensured by requiring
\begin{align}\label{eq24a}
\int_{I^{\mathbb{Z}}} H_{\hat{\mu}_{\hat \omega}}(\alpha_{\hat \omega_1}) \, d\hat{\nu}(\hat \omega) < \infty,
\end{align}
which is in general weaker than condition (r7). In view of this, we will call a collection $\mathcal{T}=(I, \mathbf P, \{T_i\}_{i \in I},\mu, \{\alpha_i\}_{i \in I})$ on $[0,1)$ satisfying (r1)-(r6) together with \eqref{eq24a} a random number system as well. All our results also hold in this case.
}\end{remark}

\begin{remark}{\rm
Note that in case $I = \{1\}$ contains only one element, then $h_{\nu}(\tau)=0$ and Theorem~\ref{t:main}(ii) gives that $h^{\text{fib}}(\mathcal T) = h_\rho(T_1)$, where $\rho$ is the ergodic invariant probability measure for $T_1$ equivalent to $\lambda$.
}\end{remark}

\section{Proofs of the main results}\label{s:main}

\subsection{Asymptotic size of cylinder sets and the random Rokhlin formula}
In this section we prove the three statements of Theorem~\ref{t:main} and we obtain Theorem~\ref{t:randomlochs} as a corollary. We start with Theorem~\ref{t:main}(i) on the asymptotic size of cylinder sets. Let $\mathcal T = (I, \mathbf P, \{T_i\}_{i \in I},\mu, \{ \alpha_i\}_{i \in I})$ be a random number system.
\begin{theorem}\label{prop2.6a}
For $\mathbf{P}$-a.a.~$\omega \in I^{\mathbb{N}}$ we have
\[ \lim_{n \rightarrow \infty} -\frac{\log \lambda\big(\alpha_{\omega,n}(x)\big)}{n} = h^{\mathrm{fib}}(\mathcal{T}) \qquad \lambda\text{-a.e.}\]
\end{theorem}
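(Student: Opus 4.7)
My plan is to invoke the random Shannon-McMillan-Breiman theorem in the invertible two-sided setting, convert the resulting conclusion about the fiberwise conditional measure $\hat{\mu}_{\hat{\omega}}$ into a statement about Lebesgue measure by a martingale argument applied to the density $d\hat{\mu}_{\hat{\omega}}/d\lambda$, and finally descend from the two-sided base $I^{\mathbb{Z}}$ back to the one-sided base $I^{\mathbb{N}}$ via $\mu=\hat\mu\circ\Pi^{-1}$ together with the equivalence $\mu\sim\mathbf{P}\times\lambda$ supplied by (r6).

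For the first step I would adapt the random SMB theorem of Bogensch\"utz \cite{Bogenschutz93} to our setting, in exactly the same spirit as the adaptation already carried out for the Kolmogorov-Sinai type identity~\eqref{limit6}. The finiteness $H_{\hat\mu}(\hat\Delta\mid\mathcal A)<\infty$ from~\eqref{q:entropygivenA} and the absolute continuity $\hat\mu_{\hat\omega}\ll\lambda$ from Lemma~\ref{l:absct} are enough to yield, for $\hat\mu$-a.e.~$(\hat\omega,x)\in I^{\mathbb Z}\times[0,1)$,
\[
-\frac{1}{n}\log\hat\mu_{\hat\omega}\bigl(\alpha_{\hat\omega,n}(x)\bigr)\;\longrightarrow\;h^{\mathrm{fib}}(\mathcal T).
\]
For the second step I set $f_{\hat\omega}=d\hat\mu_{\hat\omega}/d\lambda$ (well-defined $\hat\nu$-a.e.~by Lemma~\ref{l:absct}) and consider
\[
M_n^{\hat\omega}(x)\;=\;\frac{\hat\mu_{\hat\omega}(\alpha_{\hat\omega,n}(x))}{\lambda(\alpha_{\hat\omega,n}(x))}\;=\;\mathbb E_{\lambda}\!\bigl[f_{\hat\omega}\bigm|\sigma(\alpha_{\hat\omega,n})\bigr](x),
\]
which is a non-negative $\lambda$-martingale for the increasing filtration $\bigl(\sigma(\alpha_{\hat\omega,n})\bigr)_{n\ge1}$. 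By Doob's convergence theorem combined with the generating property~(r5), $M_n^{\hat\omega}(x)\to f_{\hat\omega}(x)$ for $\lambda$-a.e., and hence for $\hat\mu_{\hat\omega}$-a.e., $x$. Since $\hat\mu_{\hat\omega}(\{f_{\hat\omega}=0\})=\int_{\{f_{\hat\omega}=0\}}f_{\hat\omega}\,d\lambda=0$ and $f_{\hat\omega}$ is $\lambda$-a.e.~finite, the limit lies in $(0,\infty)$ $\hat\mu_{\hat\omega}$-a.s., and therefore $n^{-1}\log M_n^{\hat\omega}(x)\to 0$ for $\hat\mu$-a.e.~$(\hat\omega,x)$.

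Combining the two steps through the decomposition
\[
-\frac{\log\lambda(\alpha_{\hat\omega,n}(x))}{n}\;=\;-\frac{\log\hat\mu_{\hat\omega}(\alpha_{\hat\omega,n}(x))}{n}\;+\;\frac{1}{n}\log M_n^{\hat\omega}(x)
\]
gives the desired convergence with $\lambda$ in place of $\hat\mu_{\hat\omega}$, still for $\hat\mu$-a.e.~$(\hat\omega,x)$. Finally, since $\alpha_{\hat\omega,n}(x)$ depends on $\hat\omega$ only through $\pi(\hat\omega)\in I^{\mathbb N}$, pushing the statement forward by~$\Pi$ and using $\mu=\hat\mu\circ\Pi^{-1}$ from Proposition~\ref{p:alejan}(ii) produces a $\mu$-a.e.~statement on $I^{\mathbb N}\times[0,1)$. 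Because $\mu\sim\mathbf P\times\lambda$ by (r6) together with $\nu\sim\mathbf P$, Fubini then delivers the conclusion: for $\mathbf P$-a.e.~$\omega\in I^{\mathbb N}$, the limit holds $\lambda$-a.e. Note that this route avoids having to prove the reverse absolute continuity $\lambda\ll\hat\mu_{\hat\omega}$; we transfer a.e.~statements across $\hat\mu$, $\hat\nu\times\lambda$ and $\mathbf P\times\lambda$ rather than comparing $\hat\mu_{\hat\omega}$ with $\lambda$ fiber by fiber.

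The main obstacle is Step~1. Neither of the random SMB statements available in the literature applies directly: \cite[Lemma~II.1.5]{kifer} demands a generator of the full product $\sigma$-algebra as in~\eqref{q:kifergen}, which is strictly stronger than our fiberwise condition~(r5), while \cite[Theorem~2.3.3]{Bogenschutz93} is stated for a single partition $\gamma$ of $[0,1)$ rather than a family $\{\alpha_i\}_{i\in I}$. The remedy is analogous to the one the authors have already outlined for the Kolmogorov-Sinai-type identity~\eqref{limit6}: rerun the proof of Bogensch\"utz's SMB theorem with $\hat\Delta$ in place of $\gamma$, the essential inputs being $H_{\hat\mu}(\hat\Delta\mid\mathcal A)<\infty$ and $\hat\mu_{\hat\omega}\ll\lambda$.
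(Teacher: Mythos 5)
Your proposal is correct, and it shares the paper's overall skeleton --- random SMB applied to $\hat\Delta$ on the invertible two-sided extension, a comparison of $\hat\mu_{\hat\omega}\big(\alpha_{\hat\omega,n}(x)\big)$ with $\lambda\big(\alpha_{\hat\omega,n}(x)\big)$, and a descent to $(I^{\mathbb N},\mathbf P\times\lambda)$ --- but the two middle ingredients are handled by genuinely different means. For Step~1 the paper does not rerun Bogensch\"utz's argument as you propose: it cites the random Shannon--McMillan--Breiman theorem of Zhu \cite[Proposition 2.2(3)]{zhu08}, checking only that its hypothesis $\int_{I^{\mathbb Z}}H_{\hat\mu_{\hat\omega}}(\hat\Delta_{\hat\omega})\,d\hat\nu<\infty$ coincides with $H_{\hat\mu}(\hat\Delta\mid\mathcal A)<\infty$ via \eqref{eq20a}; this is a cheaper route to the same conclusion \eqref{eq:2.42a}. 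For the comparison step the paper works with Lebesgue density points of $f_{\hat\omega}=d\hat\mu_{\hat\omega}/d\lambda$ via the Lebesgue Differentiation Theorem, sandwiching $\hat\mu_{\hat\omega}(\alpha_{\hat\omega,n}(x))$ between $\big(f_{\hat\omega}(x)\pm\varepsilon\big)\lambda(\alpha_{\hat\omega,n}(x))$ at points where $f_{\hat\omega}(x)>0$; your ratio martingale $M_n^{\hat\omega}=\mathbb E_\lambda\big[f_{\hat\omega}\mid\sigma(\alpha_{\hat\omega,n})\big]$ accomplishes the same thing, with the small caveat that to identify the limit as $f_{\hat\omega}$ you should invoke L\'evy's Upward Theorem (the martingale is closed by $f_{\hat\omega}\in L^1(\lambda)$) together with (r5), rather than plain Doob convergence, and note that $M_n^{\hat\omega}$ is defined off the $\lambda$-null set where $\lambda(\alpha_{\hat\omega,n}(x))=0$. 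Your variant buys something: it uses only the measurable generating property (r5), whereas the density-point argument implicitly needs the cylinders $\alpha_{\hat\omega,n}(x)$ to shrink to $x$ geometrically. Your descent is also slightly cleaner --- since the failure set of the $\lambda$-statement is $\Pi$-saturated, a single push-forward through $\mu=\hat\mu\circ\Pi^{-1}$ followed by (r6) and Fubini suffices, while the paper must construct the auxiliary set $A$ and exhibit, for $\mathbf P\times\lambda$-a.e.\ $(\omega,x)$, a lift $\hat\omega$ with $(\hat\omega,x)\in C$ and $f_{\hat\omega}(x)>0$. Both arguments are sound.
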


\begin{proof}
Since $I^{\mathbb{Z}}$ is a Polish space on which $\hat \tau$ is invertible and the measure $\hat \mu$ is $\hat{F}$-invariant and ergodic, we are in the position to apply \cite[Proposition 2.2 (3)]{zhu08}, which gives the Shannon-McMillan-Breiman Theorem for random dynamical systems. We apply it to the partition $\hat \Delta$ of $I^\mathbb Z \times [0,1)$, which is an at most countable partition satisfying $H_{\hat \mu} (\hat \Delta | \mathcal A) < \infty$. Note that \cite[Proposition 2.2]{zhu08} considers random compositions of continuous transformations and requires the partition under consideration to be finite. However, the continuity of the transformations is not used in the proof, and the necessary condition on the partition is that it is at most countable and satisfies
\begin{equation}\label{q:zhu}
\int_{I^\mathbb Z} H_{\hat \mu_{\hat \omega}} (\hat \Delta_{\hat \omega}) \, d \hat \nu(\hat \omega) < \infty .
\end{equation}
From \eqref{eq20a} it follows that $\int_{I^\mathbb Z} H_{\hat \mu_{\hat \omega}} (\hat \Delta_{\hat \omega}) \, d \hat \nu(\hat \omega) = H_{\hat \mu} (\hat \Delta | \mathcal A)$, so the condition from \eqref{q:zhu} is satisfied. Thus, using the formula for $h^{\text{fib}}(\mathcal{T})$ from \eqref{eq22a}, \cite[Proposition 2.2 (3)]{zhu08} gives that
\begin{equation}\label{eq:2.42a}
\lim_{n \rightarrow \infty} - \frac{\log \hat \mu_{\hat{\omega}}\big( \alpha_{\hat \omega,n}(x)\big)}{n} = h^{\text{fib}}(\mathcal{T}), \qquad \hat \mu\text{-a.e.~$(\hat{\omega},x) \in I^\mathbb Z \times [0,1)$}.
\end{equation}

Let $C \in  \mathcal B_I^\mathbb Z \times \mathcal B$ be the set of points $(\hat \omega,x)$ with the following three properties:
\begin{itemize}
\item[(i)] the limit statement from \eqref{eq:2.42a} holds;
\item[(ii)] $\frac{d\hat \mu}{d\hat \nu \times \lambda} (\hat \omega,x) = \frac{d\hat \mu_{\hat \omega}}{d\lambda} (x)$;
\item[(iii)] $x$ is a Lebesgue density point for $\frac{d\hat \mu_{\hat \omega}}{d\lambda}$.
\end{itemize}
Since $\frac{d\hat \mu_{\hat \omega}}{d\lambda} \in L^1(\lambda)$ holds for $\hat \nu$-a.e.~$\hat \omega$, it follows from the Lebesgue Differentiation Theorem that for such $\hat \omega$ we have that $\lambda$-a.e.~$x \in [0,1)$ is a Lebesgue density point of $\frac{d\hat \mu_{\hat \omega}}{d\lambda}$. Together with \eqref{eq:2.42a} and Lemma~\ref{l:absct} we deduce that $\hat \mu (C)=1$. Define
\[ A = \Big\{(\omega,x) \in I^{\mathbb{N}} \times [0,1) : \frac{d\hat{\mu}}{d\hat{\nu} \times \lambda}\Big|_{\Pi^{-1}\{ (\omega,x)\} \cap C} = 0\Big\}.\]
Then by Proposition~\ref{p:alejan},
\[ \mu(A) =  \hat \mu \big(\Pi^{-1}A \cap C \big)   = \int_{\Pi^{-1}A \cap C} \frac{d\hat \mu}{d\hat \nu \times \lambda} \, d\hat \nu \times \lambda = 0,\]
thus $\mathbf P \times \lambda(A) = 0$ by (r6). Hence, for $\mathbf P \times \lambda$-a.e.~$(\omega,x)$ there exists an $\hat{\omega} \in I^{\mathbb{Z}}$ such that $(\hat{\omega},x) \in C$ and $\pi(\hat{\omega}) = \omega$ and $\frac{d\hat{\mu}_{\hat{\omega}}}{d\lambda}(x) > 0$, where this last part follows from property (ii) of $C$. For each such $(\omega,x)$ and $\hat{\omega}$ it follows from property (iii) of elements in $C$ that for every $\varepsilon > 0$ there exists an $N \in \mathbb{N}$ such that for each integer $n \geq N$,
\[ \Big(\frac{d\hat{\mu}_{\hat{\omega}}}{d\lambda}(x) - \varepsilon\Big) \lambda\big(\alpha_{\hat \omega,n}(x)\big) \leq \hat{\mu}_{\hat{\omega}}\big(\alpha_{\hat \omega,n}(x)\big) \leq \Big(\frac{d\hat{\mu}_{\hat{\omega}}}{d\lambda}(x) + \varepsilon\Big) \lambda\big(\alpha_{\hat \omega,n}(x)\big).
\]
Combining this with \eqref{eq:2.42a} yields the result.
\end{proof}

From this result the Random Lochs' Theorem from Theorem~\ref{t:randomlochs} immediately follows.
\begin{proof}[Proof of Theorem~\ref{t:randomlochs}]
We have $h^{\mathrm{fib}}(\mathcal{T}), h^{\mathrm{fib}}(\mathcal{S}) \in (0,\infty)$, so combining Theorem~\ref{t:partitions} with Theorem~\ref{t:main}(i) yields the desired result.
\end{proof}

The last item of Theorem~\ref{t:main} is the random Rokhlin formula relating fiber entropy to the Jacobian of the transformations. We first prove an auxiliary lemma, for which we introduce some notation.

\vskip .2cm
Assume as in Theorem~\ref{t:main}(iii) that for each $i \in I$ and $A \in \alpha_i$ the restriction $T_i|_A$ is differentiable. Then for each $i \in I$ the Jacobian $JT_i$ of $T_i$ with respect to $\lambda$ exists and is equal to $J T_i = |DT_i(x)|$ for $\lambda$-a.e.~$x \in [0,1)$. From Lemma~\ref{l:absct} together with the $\hat \tau$-invariance of $\hat{\nu}$ it follows that for $\hat \nu$-a.e.~$\hat \omega \in I^\mathbb Z$, $\hat{\mu}_{\hat \tau \hat \omega} \ll \lambda$. By (r5) we obtain that for $\hat \nu$-a.e.~$\hat \omega \in I^\mathbb Z$,
\begin{equation}\label{q:hatmuhatsigmazero}
 \sigma\big(\lim_{n \rightarrow \infty} \alpha_{\hat \tau \hat \omega,n}) = \mathcal{B} \, \text{ up to sets of $\hat{\mu}_{\hat \tau \hat \omega}$-measure zero}.
\end{equation}
Since the standing assumptions of \cite{Bogenschutz93} are satisfied, \cite[Lemma 1.1.2]{Bogenschutz93} gives that
\begin{equation}\label{q:abscthatsigma}
\hat{\mu}_{\hat \omega} \circ T_{\hat \omega_1}^{-1} = \hat{\mu}_{\hat \tau \hat \omega}, \qquad \hat \nu\text{-a.a.~$\hat \omega \in I^{\mathbb{Z}}$}.
\end{equation}
Let
\[ E = \{ \hat \omega \in I^\mathbb Z \, : \, \hat{\mu}_{\hat \tau \hat \omega} \ll \lambda \, \text{ and } \text{ \eqref{q:hatmuhatsigmazero} and \eqref{q:abscthatsigma} hold} \}.\]
Then $\hat \nu (E)=1$. For $\hat \omega \in E$ let $C_{\hat \omega} = \{ x \in [0,1): \frac{d\hat{\mu}_{\hat \tau \hat \omega}}{d\lambda}(x) > 0\}$, define $h_{\hat \omega}: [0,1) \rightarrow [0, \infty)$ by
\[ h_{\hat \omega}(y) = \begin{cases}
\big(\frac{d\hat \mu_{\hat \tau \hat \omega}}{d\lambda}(y) \big)^{-1} , & \text{if } y \in C_{\hat \omega},\\
0, & \text{if } y \in [0,1) \backslash C_{\hat \omega},
\end{cases}\]
and define
\[ \mathcal L_{\hat \omega}\psi (y) = h_{\hat \omega}(y) \sum_{z \in T_{\hat \omega_1}^{-1}\{y\}} \frac{\psi(z)}{J T_{\hat \omega_1}(z)} \cdot \frac{d\hat \mu_{\hat \omega}}{d\lambda}(z),\]
which, as we will see in the proof of Lemma \ref{l:operator} below, is well-defined as an operator from $L^1(\hat \mu_{\hat \omega})$ to $L^1 (\hat \mu_{\hat \tau \hat \omega})$.

\begin{lemma}\label{l:operator}
For $\hat \omega \in E$, $\psi \in L^1(\hat \mu_{\hat \omega})$ and $n \ge 1$ the following hold.
\begin{itemize}
\item[(i)] $\mathbb E_{\hat \mu_{\hat \omega}}(\psi| \sigma(T_{\hat \omega_1}^{-1} \alpha_{\hat \tau \hat \omega,n}))(x) = \mathbb{E}_{\hat{\mu}_{\hat \tau \hat \omega}}(\mathcal L_{\hat \omega}\psi |\sigma(\alpha_{\hat \tau \hat \omega,n}))(T_{\hat \omega_1} x)$ {for $\hat{\mu}_{\hat  \omega}$-a.e.~$x$.}
\vskip .1cm
\item[(ii)] $\lim_{n \rightarrow \infty} \mathbb E_{\hat \mu_{\hat \tau \hat \omega}}(\mathcal L_{\hat \omega}\psi |\sigma(\alpha_{\hat\tau \hat \omega,n})) = \mathbb E_{\hat \mu_{\hat \tau \hat \omega}}(\mathcal L_{\hat \omega}\psi |\mathcal{B}) = \mathcal L_{\hat \omega}\psi$  {for $\hat{\mu}_{\hat \tau \hat  \omega}$-a.e.~$x$.}
\end{itemize}
\end{lemma}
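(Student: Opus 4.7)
The plan is to interpret $\mathcal L_{\hat \omega}$ as a reweighted Perron--Frobenius transfer operator, and then derive (i) from a duality identity and (ii) from Levy's martingale convergence theorem.

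First I would show that $\mathcal L_{\hat \omega}$ factors as
\[ \mathcal L_{\hat \omega}\psi(y) = h_{\hat \omega}(y) \cdot \widetilde L_{\hat \omega_1}(\psi \cdot f_{\hat \omega})(y), \qquad \widetilde L_{\hat \omega_1}\phi(y) := \sum_{z \in T_{\hat \omega_1}^{-1}\{y\}} \frac{\phi(z)}{JT_{\hat \omega_1}(z)}, \]
where $f_{\hat \omega} := d\hat \mu_{\hat \omega}/d\lambda$, which exists for $\hat \omega$ in a full-$\hat\nu$-measure subset of $E$ by Lemma~\ref{l:absct} (shrinking $E$ on a null set if necessary). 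Since each $T_{\hat \omega_1}|_A$ is monotone and differentiable on $A \in \alpha_{\hat \omega_1}$ with Jacobian $JT_{\hat \omega_1}$, a branch-wise change of variables gives the standard Perron--Frobenius identity $\int (g \circ T_{\hat \omega_1}) \phi \, d\lambda = \int g \cdot \widetilde L_{\hat \omega_1}\phi \, d\lambda$ for all bounded Borel $g$ and $\phi \in L^1(\lambda)$. Using that $\hat\mu_{\hat\tau\hat\omega}$ is supported on $C_{\hat\omega}$, on which $h_{\hat\omega} \cdot f_{\hat \tau \hat \omega} \equiv 1$, this yields the duality identity
\[ \int (g \circ T_{\hat \omega_1}) \, \psi \, d\hat \mu_{\hat \omega} = \int g \cdot \mathcal L_{\hat \omega}\psi \, d\hat \mu_{\hat \tau \hat \omega} \qquad (\ast) \]
for all bounded Borel $g$ and all $\psi \in L^1(\hat \mu_{\hat \omega})$. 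Applying $(\ast)$ with $g \equiv 1$ to $|\psi|$ shows that $\mathcal L_{\hat \omega}: L^1(\hat \mu_{\hat \omega}) \to L^1(\hat \mu_{\hat \tau \hat \omega})$ is a well-defined contraction.

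For part (i), observe that $\sigma(T_{\hat \omega_1}^{-1}\alpha_{\hat \tau \hat \omega, n}) = T_{\hat \omega_1}^{-1}\sigma(\alpha_{\hat \tau \hat \omega, n})$, so the claimed right-hand side, as a function of $x$, is automatically $\sigma(T_{\hat \omega_1}^{-1}\alpha_{\hat \tau \hat \omega, n})$-measurable. To verify the defining integral property, I fix $B \in \sigma(\alpha_{\hat \tau \hat \omega, n})$ and compute, using $(\ast)$ twice,
\begin{align*}
\int_{T_{\hat \omega_1}^{-1}B} \psi \, d\hat \mu_{\hat \omega} &= \int_B \mathcal L_{\hat \omega}\psi \, d\hat \mu_{\hat \tau \hat \omega} = \int_B \mathbb E_{\hat \mu_{\hat \tau \hat \omega}}\bigl(\mathcal L_{\hat \omega}\psi \,\big|\, \sigma(\alpha_{\hat \tau \hat \omega,n})\bigr) \, d\hat \mu_{\hat \tau \hat \omega} \\
&= \int_{T_{\hat \omega_1}^{-1}B} \mathbb E_{\hat \mu_{\hat \tau \hat \omega}}\bigl(\mathcal L_{\hat \omega}\psi \,\big|\, \sigma(\alpha_{\hat \tau \hat \omega,n})\bigr)(T_{\hat \omega_1}x) \, d\hat \mu_{\hat \omega}(x),
\end{align*}
where the middle equality is the definition of conditional expectation (since $B \in \sigma(\alpha_{\hat \tau \hat \omega,n})$) and the last equality uses the invariance relation $\hat \mu_{\hat \omega} \circ T_{\hat \omega_1}^{-1} = \hat \mu_{\hat \tau \hat \omega}$ from \eqref{q:abscthatsigma}.

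Part (ii) is then a direct consequence of Levy's upward martingale theorem. The partitions $\alpha_{\hat \tau \hat \omega,n}$ refine in $n$ by \eqref{q:cylindersr}, so the $\sigma$-algebras $\sigma(\alpha_{\hat \tau \hat \omega,n})$ form an increasing filtration whose union generates $\mathcal B$ modulo $\hat \mu_{\hat \tau \hat \omega}$-null sets by \eqref{q:hatmuhatsigmazero} (which holds because $\hat \omega \in E$). Since $\mathcal L_{\hat \omega}\psi \in L^1(\hat \mu_{\hat \tau \hat \omega})$ by the first step, martingale convergence gives the $\hat\mu_{\hat\tau\hat\omega}$-a.e.~convergence to $\mathbb E_{\hat\mu_{\hat\tau\hat\omega}}(\mathcal L_{\hat \omega}\psi \mid \mathcal B) = \mathcal L_{\hat \omega}\psi$. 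The main technical obstacle will be establishing $(\ast)$ cleanly, in particular correctly handling the convention that $h_{\hat \omega} \equiv 0$ on $[0,1)\setminus C_{\hat \omega}$; since this exceptional set is $\hat \mu_{\hat \tau \hat \omega}$-null and $h_{\hat \omega} \cdot f_{\hat \tau \hat \omega} \equiv 1$ on $C_{\hat \omega}$, the change of variables combined with Lemma~\ref{l:absct} goes through without further modification.
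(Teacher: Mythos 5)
Your proof is correct and follows essentially the same route as the paper's: your duality identity $(\ast)$ specialised to $g = 1_B$ with $B \in \alpha_{\hat\tau\hat\omega,n}$ is exactly the branch-wise change-of-variables computation the paper performs, and part (ii) is obtained in both cases by combining \eqref{q:hatmuhatsigmazero} with L\'evy's Upward Theorem. The only (harmless) presentational difference is that you conclude (i) from the defining property of conditional expectation (measurability with respect to $\sigma(T_{\hat\omega_1}^{-1}\alpha_{\hat\tau\hat\omega,n})$ plus the integral identity over all sets of the form $T_{\hat\omega_1}^{-1}B$), whereas the paper writes both conditional expectations out explicitly as sums over the atoms of the countable partitions.
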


\begin{proof}
By \eqref{q:abscthatsigma} and the definition of $C_{\hat \omega}$, $\hat \mu_{\hat \omega}(T_{\hat \omega_1}^{-1}C_{\hat \omega}) = \hat \mu_{\hat \tau \hat \omega}(C_{\hat \omega}) = 1$. Using a change of variables formula we obtain for all $B \in \alpha_{\hat \tau \hat \omega,n}$ that
\[ \begin{split}
\int_B \mathcal L_{\hat \omega}\psi  \, d\hat \mu_{\hat\tau \hat \omega} = \int_{B\cap C_{\hat \omega}} \mathcal L_{\hat \omega}\psi  \frac{d\hat{\mu}_{\hat \tau \hat \omega}}{d\lambda} \, d\lambda =\ & \int_{B \cap C_{\hat \omega}} \sum_{z \in T_{\hat \omega_1}^{-1}\{y\}} \frac{\psi(z)}{J T_{\hat \omega_1}(z)} \cdot \frac{d\hat{\mu}_{\hat \omega}}{d\lambda}(z) \, d\lambda(y) \\
=\ & \sum_{A \in \alpha_{\hat \omega_1}} \int_{T_{\hat \omega_1}(A) \cap B \cap C_{\hat \omega}} \Big(\frac{\psi \cdot \frac{d\hat \mu_{\hat \omega}}{d\lambda}}{J T_{\hat \omega_1}}\Big) \circ (T_{\hat \omega_1}|_A)^{-1}(y) \, d\lambda(y)\\
=\ & \sum_{A \in \alpha_{\omega_1}} \int_{A \cap T_{\hat \omega_1}^{-1} B \cap T_{\hat \omega_1}^{-1} C_{\hat \omega}} \psi \cdot \frac{d\hat \mu_{\hat \omega}}{d\lambda}  \, d\lambda\\
=\ & \int_{T_{\hat \omega_1}^{-1} B} \psi \, d\hat \mu_{\hat \omega}.
\end{split}\]
In particular,  $\mathcal L_{\hat \omega}: L^1(\hat \mu_{\hat \omega}) \rightarrow L^1 (\hat \mu_{\hat \tau \hat \omega})$ is well-defined. We obtain that
\[ \begin{split}
\mathbb{E}_{\hat{\mu}_{\hat \tau \hat \omega}}(\mathcal L_{\hat \omega}\psi |\sigma(\alpha_{\hat \tau \hat \omega,n}))(T_{\hat \omega_1} x) =\ & \sum_{B \in \alpha_{\hat \tau \hat \omega,n}} 1_B(T_{\omega_1}x) \frac{\int_B \mathcal L_{\hat \omega}\psi \, d\hat{\mu}_{\hat \tau \hat \omega}}{\hat{\mu}_{\hat \tau \hat \omega}(B)}\\
=\ & \sum_{B \in \alpha_{\hat \tau \hat \omega,n}} 1_{T_{\hat \omega_1}^{-1}B}(x) \frac{\int_{T_{\hat \omega_1}^{-1}B} \psi \, d\hat \mu_{\hat \omega}}{\hat \mu_{\hat \omega}(T_{\hat \omega_1}^{-1}B)}\\
=\ & \sum_{A \in T_{\hat \omega_1}^{-1} \alpha_{\hat \tau \hat \omega,n}} 1_A (x) \frac{\int_A \psi \, d\hat \mu_{\hat \omega}}{\hat \mu_{\hat \omega}(A)}\\
= \ & \mathbb E_{\hat \mu_{\hat \omega}}(\psi| \sigma(T_{\hat \omega_1}^{-1} \alpha_{\hat \tau \hat \omega,n}))(x)
\end{split}\]
for $\hat{\mu}_{\hat \tau \hat  \omega}$-a.e.~$x$. This gives (i). Part (ii) follows from combining \eqref{q:hatmuhatsigmazero} and L\'evy's Upward Theorem.
\end{proof}

This lemma is enough to prove the following Random Rokhlin's Formula.

\begin{theorem}\label{prop2.7a}
If for each $i \in I$ and $A \in \alpha_i$ the restriction $T_i|_A$ is differentiable, then
\[ h^{\mathrm{fib}}(\mathcal{T}) = \int_{I^\mathbb N \times [0,1)} \log |DT_{\omega_1}(x)| \, d\mu(\omega,x). \]
\end{theorem}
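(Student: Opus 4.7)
The plan is to start from the second expression for the fiber entropy given in \eqref{eq22b},
\[ h^{\mathrm{fib}}(\mathcal{T}) = \lim_{n \rightarrow \infty} \int_{I^\mathbb Z} H_{\hat \mu_{\hat \omega}}\big(\alpha_{\hat \omega_1} \,\big|\, \sigma(T_{\hat \omega_1}^{-1} \alpha_{\hat\tau \hat \omega,n})\big)\, d\hat \nu(\hat \omega), \]
and identify the integrand in the limit in terms of the Jacobian of $T_{\hat\omega_1}$ and the densities of the conditional measures. Writing the information function form of the conditional entropy,
\[ H_{\hat \mu_{\hat \omega}}\big(\alpha_{\hat \omega_1} | \sigma(T_{\hat \omega_1}^{-1} \alpha_{\hat\tau \hat \omega,n})\big) = -\int \sum_{A \in \alpha_{\hat \omega_1}} 1_A(x)\, \log \mathbb E_{\hat \mu_{\hat \omega}}\big(1_A | \sigma(T_{\hat \omega_1}^{-1} \alpha_{\hat\tau \hat \omega,n})\big)(x)\, d\hat \mu_{\hat \omega}(x), \]
Lemma \ref{l:operator}(i) allows me to replace the conditional expectation by $\mathbb E_{\hat \mu_{\hat \tau \hat \omega}}(\mathcal L_{\hat \omega} 1_A | \sigma(\alpha_{\hat \tau \hat \omega,n}))(T_{\hat \omega_1}x)$, and Lemma \ref{l:operator}(ii) gives pointwise convergence of this quantity to $\mathcal L_{\hat \omega} 1_A(T_{\hat \omega_1}x)$ as $n\to\infty$.

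The first technical step is to justify the interchange of the limit with both integrals. I would dominate the information functions by an integrable function using a standard maximal-function argument, analogous to the Chung--Neveu inequality used in the proof of the classical Rokhlin formula: the random assumption (r7), together with \eqref{eq20a}, gives the $\hat\nu$-integrability of $H_{\hat \mu_{\hat \omega}}(\alpha_{\hat \omega_1})$, from which L\'evy's maximal inequality for reversed martingales provides a dominating integrable random function uniformly in $n$. Given this domination, dominated convergence yields
\[ h^{\mathrm{fib}}(\mathcal{T}) = -\int_{I^\mathbb Z} \int_{[0,1)} \sum_{A \in \alpha_{\hat \omega_1}} 1_A(x)\, \log \mathcal L_{\hat \omega} 1_A(T_{\hat \omega_1}x)\, d\hat \mu_{\hat \omega}(x)\, d\hat \nu(\hat \omega). \]

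The second step is to simplify $\mathcal L_{\hat \omega} 1_A(T_{\hat \omega_1}x)$ for $x$ lying in the element $A \in \alpha_{\hat\omega_1}$. Since $T_{\hat\omega_1}|_A$ is strictly monotone by (r2), the unique preimage of $T_{\hat\omega_1}x$ in $A$ is $x$ itself, so writing $\rho_{\hat \omega} := d\hat \mu_{\hat \omega}/d\lambda$ (which exists for $\hat\nu$-a.e.\ $\hat\omega$ by Lemma \ref{l:absct}) we get
\[ \mathcal L_{\hat \omega} 1_A(T_{\hat \omega_1}x) = \frac{\rho_{\hat \omega}(x)}{\rho_{\hat \tau \hat \omega}(T_{\hat \omega_1}x)\, JT_{\hat \omega_1}(x)} \qquad \text{for $x \in A \cap C_{\hat\omega}^\ast$,} \]
where $C_{\hat\omega}^\ast := T_{\hat\omega_1}^{-1}C_{\hat\omega}$ is a set of full $\hat\mu_{\hat\omega}$-measure by \eqref{q:abscthatsigma}. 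Taking logarithms, the integrand splits as
\[ \log JT_{\hat \omega_1}(x) + \log \rho_{\hat \tau \hat \omega}(T_{\hat \omega_1}x) - \log \rho_{\hat \omega}(x). \]

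The final step is to show that the two density terms cancel after integration. Using \eqref{q:abscthatsigma} to push forward $\hat\mu_{\hat\omega}$ under $T_{\hat\omega_1}$, the middle term integrates to $\int \int \log \rho_{\hat\tau\hat\omega}(y)\, d\hat\mu_{\hat\tau\hat\omega}(y)\, d\hat\nu(\hat\omega)$; by the $\hat\tau$-invariance of $\hat\nu$, this equals $\int \int \log \rho_{\hat\omega}(x)\, d\hat\mu_{\hat\omega}(x)\, d\hat\nu(\hat\omega)$, which cancels the last term. What remains is $\int \log JT_{\hat\omega_1}(x)\, d\hat\mu(\hat\omega,x)$, and since this integrand only depends on coordinates visible to $\Pi$, Proposition \ref{p:alejan}(ii) identifies it with $\int \log |DT_{\omega_1}(x)|\, d\mu(\omega,x)$. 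The main obstacle I anticipate is the integrability needed to make the cancellation legitimate: the terms $\int \log \rho_{\hat\omega}\, d\hat\mu$ need not be absolutely integrable a priori, so one first has to establish that they are either both finite or proceed via a truncation/approximation argument (e.g.\ truncating $\log \rho$ at levels $\pm N$ and letting $N\to\infty$ using that the difference $\log \rho_{\hat\omega}(x) - \log \rho_{\hat\tau\hat\omega}(T_{\hat\omega_1}x)$ has zero mean after $\hat\tau$-averaging). A secondary subtlety is handling the countable sum $\sum_{A \in \alpha_{\hat\omega_1}}$ under the dominated convergence step, which is where the finite-entropy hypothesis (r7) enters decisively.
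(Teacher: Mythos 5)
Your proposal follows essentially the same route as the paper's proof: starting from \eqref{eq22b}, using Lemma~\ref{l:operator}(i)--(ii) to pass to $\mathcal L_{\hat\omega}1_A$, identifying $\mathcal L_{\hat\omega}1_A(T_{\hat\omega_1}x)$ as a ratio of densities over the Jacobian, cancelling the density terms via invariance (your pushforward plus $\hat\tau$-invariance of $\hat\nu$ is exactly the paper's use of $\eta\circ\hat F$ and the $\hat F$-invariance of $\hat\mu$), and projecting back to the one-sided system via Proposition~\ref{p:alejan}(ii). The only deviations are cosmetic — you use the information-function form of conditional entropy with a Chung--Neveu-type domination where the paper uses the $\phi(\mathbb E(\cdot))$ form with monotone convergence — and you additionally flag the integrability issue in the final cancellation, which the paper passes over silently.
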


\begin{proof}
We follow the reasoning of the proof of \cite[Theorem 9.7.3]{foundations}. Write $\phi(x) = -x \log x$. As before, set $\mathcal{A} = \mathcal{B}_I^{\mathbb{Z}} \times [0,1)$. Then using \eqref{q:zhu} and the Dominated Convergence Theorem, we see from \eqref{eq22b} that
\begin{equation}\label{q:hfibfirst}
\begin{split}
h^{\text{fib}}(\mathcal{T}) =\ & \int_{I^\mathbb Z} \lim_{n \rightarrow \infty} H_{\hat \mu_{\hat \omega}}\big(\alpha_{\hat \omega_1} \Big| \sigma(T_{\hat \omega_1}^{-1} \alpha_{\hat\tau \hat \omega,n})\big) d\hat \nu(\hat \omega)\\
=\ & \int_{I^\mathbb Z} \lim_{n \rightarrow \infty} \int_{[0,1)} \sum_{A \in \alpha_{\omega_1}} \phi\big(\mathbb{E}_{\hat{\mu}_{\hat \omega}}(1_A|\sigma(T_{\hat \omega_1}^{-1} \alpha_{\hat \tau \hat \omega,n}))\big) (x)\, d\hat{\mu}_{\hat \omega} (x) d\hat{\nu}(\hat \omega)\\
=\ & \int_{I^\mathbb Z} \sum_{A \in \alpha_{\hat \omega_1}} \int_{[0,1)} \phi\big(\lim_{n \rightarrow \infty} \mathbb{E}_{\hat{\mu}_{\hat \omega}}(1_A|\sigma(T_{\hat \omega_1}^{-1} \alpha_{\hat\tau \hat \omega,n}))\big)(x) \, d\hat{\mu}_{\hat \omega}(x) d\hat \nu (\hat \omega),
\end{split}\end{equation}
where for the last equality we also use the Monotone Convergence Theorem, the positivity of the integrands and the continuity of $\phi$. From Lemma~\ref{l:operator} with $\psi = 1_A$ we get from (i) that
\begin{equation}\label{q:lemma(i)}
\mathbb{E}_{\hat{\mu}_{\hat \omega}}(1_A|\sigma(T_{\hat \omega_1}^{-1} \alpha_{\hat\tau \hat \omega,n}))(x) =  \mathbb{E}_{\hat \mu_{\hat \tau \hat \omega}}(\mathcal L_{\hat \omega}1_A|\sigma(\alpha_{\hat \tau \hat \omega,n}))(T_{\hat \omega_1} x) \qquad \hat \mu_{ \hat \omega}\text{-a.e.}
\end{equation}
and from (ii) that
\begin{equation}\label{q:lemma(ii)}
\lim_{n \rightarrow \infty} \mathbb{E}_{\hat \mu_{\hat \tau \hat \omega}}(\mathcal L_{\hat \omega}1_A|\sigma(\alpha_{\hat \tau \hat \omega,n})) = \mathcal L_{\hat \omega}1_A \qquad \hat \mu_{\hat \tau \hat \omega}\text{-a.e.}
\end{equation}
Recall that $C_{\hat \omega} = \{ x \in [0,1) \, : \, \frac{d \hat \mu_{\hat \tau \hat \omega}}{d\lambda}(x)>0 \}$ satisfies $\hat \mu_{\hat \tau \hat \omega}(C_{\hat \omega})=1$. Combining \eqref{q:lemma(i)} and \eqref{q:lemma(ii)} with \eqref{q:hfibfirst} and using $\hat \mu_{\hat \tau \hat \omega} = \hat \mu_{\hat \omega} \circ T^{-1}_{\hat \omega_1}$ and a change of variables formula we conclude that
\[ \begin{split}
h^{\text{fib}}(\mathcal{T}) =\ & \int_{I^\mathbb Z} \sum_{A \in \alpha_{\hat \omega_1}} \int_{[0,1)} \phi\big(\mathcal L_{\hat \omega}1_A(x)\big)\, d\hat \mu_{\hat \tau \hat \omega}(x) \, d\hat \nu (\hat \omega)\\
=\ & - \int_{I^\mathbb Z} \sum_{A \in \alpha_{\hat \omega_1}} \bigg[ \int_{T_{\hat \omega_1}(A) \cap C_{\hat \omega}} \Big(\frac{1}{J T_{\hat \omega_1}} \frac{d\hat{\mu}_{\hat \omega}}{d\lambda} \log \Big( \frac{h_{\hat \omega} \circ T_{\hat \omega_1}}{J T_{\hat \omega_1}} \frac{d\hat{\mu}_{\hat \omega}}{d\lambda}\Big)\Big) \circ (T_{\hat \omega_1}|A)^{-1} (x)\, d\lambda (x)\bigg]d\hat \nu(\hat \omega) \\
=\ & - \int_{I^\mathbb Z} \sum_{A \in \alpha_{\hat \omega_1}} \int_{A \cap T_{\hat \omega_1}^{-1} C_{\hat \omega}}  \log \Big( \frac{h_{\hat \omega} \circ T_{\hat \omega_1}}{J T_{\hat \omega_1}} \frac{d\hat \mu_{\omega}}{d\lambda}\Big)(x) \, d\hat \mu_{\hat \omega}(x) d\hat \nu (\hat \omega) \\
=\ & \int_{I^\mathbb Z \times [0,1)} \log J T_{\hat \omega_1}(x) \, d\hat \mu (\hat \omega,x) - \int_{I^\mathbb Z \times [0,1)} \log h_{\hat \omega} \circ T_{\hat \omega_1}(x) \, d\hat \mu(\hat \omega,x)\\
& - \int_{I^\mathbb Z \times [0,1)} \log \frac{d{\hat \mu_{\hat \omega}}}{d\lambda}(x) \, d\hat \mu(\omega,x).
\end{split}\]
Set
\[ \eta(\hat \omega,x) = \begin{cases} 
\big( \frac{d\hat \mu_{\hat \omega}}{d\lambda}(x) \big)^{-1}, & \text{if } \frac{d{\hat \mu_{\hat \omega}}}{d\lambda}(x) > 0,\\
1, & \text{if } \frac{d{\hat \mu_{\hat \omega}}}{d\lambda}(x) = 0.
\end{cases} \]
Then for each $x \in T_{\hat \omega_1}^{-1}(C_{\hat \omega})$,
\[ h_{\hat \omega} \circ T_{\hat \omega_1}(x) = \eta \circ \hat F (\hat \omega,x).\]
With the $\hat F$-invariance of $\hat \mu$ this yields
\[ \int_{I^\mathbb Z \times [0,1)} \log h_{\hat \omega} \circ T_{\hat \omega_1}(x) \, d\hat \mu(\hat \omega,x) = \int_E \int_{T_{\hat{\omega}_1}^{-1} C_{\hat{\omega}}} \log  \eta \circ \hat F(\hat \omega,x) \, d\hat \mu_{\hat \omega}(x) d\hat \nu (\hat \omega) = \int_{I^\mathbb Z \times [0,1)} \log \eta \, d\hat \mu.\]
The result now follows.
\end{proof}

\begin{proof}[Proof of Theorem \ref{t:main}] 
Part (i) an (iii) are given by Theorem~\ref{prop2.6a} and Theorem~\ref{prop2.7a}, respectively. By \eqref{q:AR} for part (ii) it is enough to show that $h_{\hat \mu}(\hat F) = h_\mu(F)$ and $h_{\hat \nu}(\hat \tau) = h_\nu(\tau)$. The latter follows immediately, because $(I^\mathbb Z, \mathcal B_I^\mathbb Z, \hat \nu, \hat \tau)$ is the {\em natural extension} of the system $(I^\mathbb N, \mathcal B_I^\mathbb N, \nu, \tau)$, i.e., $(I^\mathbb Z, \mathcal B_I^\mathbb Z, \hat \nu, \hat \tau)$ is the smallest (in the sense of the $\sigma$-algebra) invertible system that has $(I^\mathbb N, \mathcal B_I^\mathbb N, \nu, \tau)$ as a factor, and entropy is preserved under this construction (see e.g.~\cite[Chapter 5]{kalle2}). For the first part, note that $(I^\mathbb N \times [0,1), \mathcal B_I^\mathbb N \times \mathcal B, \mu, F)$ is a factor of $(I^\mathbb Z \times [0,1), \mathcal B_I^\mathbb Z \times \mathcal B, \hat \mu, \hat F)$. Hence, $h_{\hat \mu}(\hat F) \ge h_\mu(F)$. Conversely, let $\mathcal{C} = \{C_1, C_2, \ldots\}$ be a countable set that generates $\mathcal B_I$, so $\sigma(\mathcal{C}) = \mathcal{B}_I$. Define for each $n \in \mathbb{N}$ the partition $\gamma_n$ of $I$ given by $\gamma_n = \{ C_i \backslash \bigcup_{j=1}^{i-1} C_j \, : \, i=1,\ldots,n\} \cup \{X \backslash \bigcup_{i=1}^n C_i  \}$. Then $\sigma(\gamma_n) \subseteq  \sigma(\gamma_{n+1})$ for each $n \ge 1$ and $\sigma(\lim_{n \rightarrow \infty} \gamma_n) = \mathcal{B}_I$. Similarly, since $\mathcal B$ is countably generated there exists a sequence of finite partitions $(\beta_n)_{n\ge 1}$ of $[0,1)$ such that $\sigma(\lim_{n \rightarrow \infty} \beta_n) = \mathcal B$. Then
\[ \xi_n= \{ (\cdots \times I \times A_{-n} \times A_{-n+1} \times \cdots \times A_n \times I \times \cdots) \times B: A_i \in \gamma_n, i=-n,\ldots,n, B \in \beta_n\}\]
(where $A_0$ is on the 0-th position) is a finite partition of $I^{\mathbb{Z}} \times [0,1)$ such that $\sigma(\lim_{n \rightarrow \infty} \xi_n) = \mathcal B_I^{\mathbb Z} \times \mathcal{B}$. Note that $\hat F^{-n-1}\xi_n$ specifies sets in positions 1 to $2n+1$ in the first coordinate. Recall that $\Pi: I^\mathbb Z \times [0,1) \to I^\mathbb N \times [0,1)$ denotes the canonical projection. We conclude using Lemma~\ref{l:increasingentropy} that
\[ \begin{split}
h_{\hat \mu}(\hat F) =\ & \lim_{n \rightarrow \infty} h_{\hat \mu}(\xi_n , \hat F) = \lim_{n \rightarrow \infty} h_{\hat \mu}(\hat F^{-n-1} \xi_n, \hat F) \\
=\ & \lim_{n \rightarrow \infty} h_{\mu}(\Pi(\hat F^{-n-1} \xi_n), F) \leq h_{\mu}(F).
\end{split}\]
This finishes the proof.
\end{proof}

\subsection{The Central Limit Theorem}
In case we compare a random number system to an NTFM we can obtain a Central Limit Theorem for Theorem~\ref{t:randomlochs} in a way comparable to what has been done in \cite{atilla} for two NTFM's. Given a random number system $\mathcal{T}=(I, \mathbf P, \{ T_i \}_{i \in I},\mu, \{ \alpha_i\}_{i \in I})$ and an NTFM $(S, \tilde \mu,  \mathcal E)$, for each $n \in \mathbb{N}$, $\omega \in I^{\mathbb N}$ and $x \in [0,1)$, let
\[ m_{\mathcal T,S}(n,\omega,x) = \sup\{m \in \mathbb{N}: \alpha_{\omega,n}(x) \subseteq \mathcal E_m(x)\}.\]
This is the analog of \eqref{eq:1.15} for comparing two random number systems. We first introduce additional assumptions that we put on the systems. The following property can be found in \cite[Property 2.1]{atilla}.

\begin{defn}\label{d:zero}
Let $(S, \tilde \mu,  \mathcal E)$ be an NTFM. We say that $S$ satisfies the {\em zero-property} if
\[ \lim_{n \rightarrow \infty} \frac{-\log \lambda\big(\mathcal E_n(x)\big) - n h_{\tilde \mu}(S)}{\sqrt{n}} = 0 \qquad \lambda\text{-a.e.}\]
\end{defn}

The zero-property is rather strong, but \cite[Section 3.2]{atilla} contains several examples of NTFM's that satisfy it, including the maps $S(x) = M x \pmod 1$ for integers $M \ge 2$ and $\beta$-transformations $S(x) = \beta x \pmod 1$ with $\beta>1$ a so-called {\em Parry number}, i.e., a number $\beta$ for which the set $\{ S^n(\beta-1) \, : \, n \ge 0 \}$ is finite. The following lemma, which compares to \cite[Lemma 2.1]{atilla}, yields a consequence of the zero-property.
\begin{lemma}[cf.~Lemma 2.1 in \cite{atilla}]\label{lemma0.7}
Let $\mathcal T = (I, \mathbf P, \{T_i\}_{i \in I}, \mu, \{\alpha_i\}_{i \in I})$ be a random number system and let $(S, \tilde \mu, \mathcal E)$ be an NTFM that satisfies the zero-property. Assume that $h^{\rm{fib}}(\mathcal T), h_{\tilde \mu}(S) \in (0,\infty)$. Then
\[ \log\bigg(\frac{\lambda(\mathcal E_{m_{\mathcal T,S}(n,\omega,x)}(x))}{\lambda(\alpha_{\omega,n}(x))}\bigg) = o(\sqrt n) \quad \text{in $\mu$-probability}. \]
That is, for each $\varepsilon > 0$,
\[ \lim_{n \to \infty} \mu\big(\big\{(\omega,x): \big| \log \lambda(\mathcal E_{m_{\mathcal T,S}(n,\omega,x)}(x)) - \log \lambda(\alpha_{\omega,n}(x)) \big| > \varepsilon \sqrt{n} \big\} \big) =0. \]
\end{lemma}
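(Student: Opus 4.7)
The lower bound $\log \lambda(\mathcal E_m(x)) - \log\lambda(\alpha_{\omega,n}(x)) \ge 0$ (with $m = m_{\mathcal T,S}(n,\omega,x)$) is immediate from $\alpha_{\omega,n}(x) \subseteq \mathcal E_m(x)$, so the plan is to establish the matching upper bound of $o(\sqrt n)$ in $\mu$-probability. The heuristic is that $m$ is chosen so that $\mathcal E_m(x)$ is the deepest $\mathcal E$-cylinder containing $\alpha_{\omega,n}(x)$, so the two sets should be comparable in size, and the zero-property will make this quantitative.

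First I would use the zero-property to replace $\mathcal E_m(x)$ by $\mathcal E_{m+1}(x)$. Applying Definition~\ref{d:zero} at both $m$ and $m+1$ gives, for $\lambda$-a.e.~$x$,
\[
\log \lambda(\mathcal E_m(x)) - \log \lambda(\mathcal E_{m+1}(x)) = h_{\tilde\mu}(S) + o(\sqrt m) \quad \text{as } m \to \infty.
\]
By Theorem~\ref{t:randomlochs} we have $m/n \to h^{\mathrm{fib}}(\mathcal T)/h_{\tilde\mu}(S)$ $\mu$-a.s., so $m = O(n)$ a.s.~and the above becomes $h_{\tilde\mu}(S) + o(\sqrt n)$. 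It therefore suffices to prove
\[
\log\lambda(\mathcal E_{m+1}(x)) - \log\lambda(\alpha_{\omega,n}(x)) \le O(1) \quad \text{in } \mu\text{-probability}.
\]

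The key geometric input is that $\alpha_{\omega,n}(x)$ is an interval (by (r2), an iterated intersection of monotone-branch preimages of intervals) containing $x$, and the defining property $\alpha_{\omega,n}(x) \not\subseteq \mathcal E_{m+1}(x)$ forces $\alpha_{\omega,n}(x)$ to extend past at least one endpoint of the interval $\mathcal E_{m+1}(x)$. Writing $d_k(x)$ for the distance from $x$ to the nearer endpoint of $\mathcal E_k(x)$, this yields
\[
\lambda(\alpha_{\omega,n}(x)) \ge d_{m+1}(x).
\]
It thus remains to prove that for every $\varepsilon > 0$ there exists $\eta > 0$ with
\[
\mu\bigl(\{(\omega,x) : d_{m+1}(x) < \eta\,\lambda(\mathcal E_{m+1}(x))\}\bigr) < \varepsilon
\]
for all sufficiently large $n$; on the complement of this event, the log ratio above is bounded by $\log(1/\eta)$, and we are done.

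The non-concentration statement in the previous display is where I expect the main work to lie. It requires that the relative position of $x$ within $\mathcal E_k(x)$ does not concentrate at its boundary, \emph{uniformly} in $k$. Since the event depends only on $x$ and $\mu \ll \mathbf P \times \lambda$ by (r6), it suffices to prove the corresponding statement for $\lambda$. For an NTFM, the restriction $S^k|_{\mathcal E_k(x)}$ is a surjective monotone homeomorphism onto $[0,1)$, and a standard bounded-distortion argument (as in \cite{atilla}) shows that the push-forward of $\lambda|_{\mathcal E_k(x)}$ under the affine normalisation $y \mapsto (y-\min \mathcal E_k(x))/\lambda(\mathcal E_k(x))$ is comparable to $\lambda$ on $[0,1)$ with constants independent of $k$; combined with condition (n2) this yields the required non-concentration and completes the proof upon letting $\varepsilon \to 0$.
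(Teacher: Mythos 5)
Your first half matches the paper's argument: the lower bound from $\alpha_{\omega,n}(x)\subseteq\mathcal E_{m}(x)$, the telescoping use of the zero-property at levels $m$ and $m+1$ (controlled via Theorem~\ref{t:randomlochs} so that $m\asymp n$), and the geometric observation that $\alpha_{\omega,n}(x)\not\subseteq\mathcal E_{m+1}(x)$ forces $\mathrm{dist}(x,\partial\mathcal E_{m+1}(x))\le\lambda(\alpha_{\omega,n}(x))$ are all exactly the paper's steps. The gap is in your final reduction. You claim it suffices to find a single $\eta>0$, independent of the level, such that $x$ is at relative distance at least $\eta$ from the endpoints of $\mathcal E_{m+1}(x)$ off a set of measure $\varepsilon$, and you propose to prove this by a bounded-distortion argument. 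Two problems. First, the hypotheses are not available: an NTFM need not have full branches (so $S^k|_{\mathcal E_k(x)}$ need not be onto $[0,1)$) and need not be differentiable, let alone have bounded distortion; only (n1)--(n4) hold. Second, and more fundamentally, distortion is beside the point: for any \emph{fixed} level $k$ the set $\{x:\mathrm{dist}(x,\partial\mathcal E_k(x))<\eta\,\lambda(\mathcal E_k(x))\}$ has Lebesgue measure exactly $2\eta$ with no regularity assumptions at all. The difficulty is that the level $m_{\mathcal T,S}(n,\omega,x)+1$ is random (it depends on $(\omega,x)$, not only on $x$, contrary to what you assert), so you would need the non-concentration to hold simultaneously over a growing range of levels; a union bound over $O(n)$ levels each of measure $2\eta$ gives nothing, and in fact for any fixed $\eta>0$ almost every $x$ comes within relative distance $\eta$ of a cylinder endpoint at infinitely many levels (a Borel--Cantelli argument already shows this for the doubling map), so no single $\eta$ can work uniformly.

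The paper's resolution, which is the one idea missing from your proposal, is to make the threshold level-dependent and summable: remove from both ends of each $E\in\mathcal E_k$ an interval of length $\frac{\tilde\varepsilon}{\pi^2k^2}\lambda(E)$, so the total measure removed over all levels is $\frac{\tilde\varepsilon}{3}$, while on the surviving set one gets $\mathrm{dist}(x,\partial\mathcal E_{m+1}(x))\ge\frac{\tilde\varepsilon}{\pi^2n^2}\lambda(\mathcal E_{m+1}(x))$. The price is that your intended $O(1)$ bound $\log(1/\eta)$ becomes $\log(\pi^2n^2/\tilde\varepsilon)=O(\log n)$, but that is still $o(\sqrt n)$, which is all the lemma needs. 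With that replacement your outline closes; as written, the non-concentration step is unproved and, in the uniform form you state it, not obtainable by the route you suggest.
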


\begin{proof} By definition of $m_{\mathcal T,S}(n,\omega,x)$ we see that $\alpha_{\omega,n}(x) \subseteq \mathcal E_{m_{\mathcal T,S}(n,\omega,x)}(x)$. Since $\mu$ and $\mathbf P \times \lambda$ are equivalent, it suffices to show that for all $\varepsilon, \tilde{\varepsilon} > 0$ there exists an $N \in \mathbb N$ such that for all $n > N$ we have
\[
\mathbf P \times \lambda\big(\big\{(\omega,x)\, :\, \log \lambda(\mathcal E_{m_{\mathcal T,S}(n,\omega,x)}(x)) - \log \lambda(\alpha_{\omega,n}(x)) > \varepsilon \sqrt n \big\} \big) < \tilde{\varepsilon}.
\]
Fix $\varepsilon, \tilde{\varepsilon} > 0$ and set $\eta = \varepsilon \sqrt{\frac{h_{\tilde \mu}(S)}{3h^{\text{fib}}(\mathcal T)}}$. For each $n \in \mathbb{N}$, we put
\[ A_n = I^\mathbb N \times \big\{ x \in [0,1]\, :\,  \exists \, k \geq n \, \text{ s.t. } | \log \lambda\big(\mathcal E_k(x)\big) + k h_{\tilde \mu}(S) | > \frac{1}{2} \eta \sqrt k\, \big\}.\]
Because $S$ satisfies the zero-property, we know that there exists an $n_0 \in \mathbb N$ such that $\mathbf P \times \lambda (A_{n_0} ) \le \frac{\tilde{\varepsilon}}{3}$. Note that if $(\omega,x) \notin A_{n_0}$, then for each $n > n_0$ we have
\[ \begin{split}
\log \lambda(\mathcal E_{n-1}(x)) \le \ &  \frac12 \eta \sqrt{n} - (n-1)h_{\tilde \mu}(S),\\
\log \lambda(\mathcal E_n(x)) \ge \ & - \frac12 \eta \sqrt{n} -  nh_{\tilde \mu}(S),
\end{split}\]
which when combined leads to
\begin{equation}\label{eq:10}
\log \lambda\big(\mathcal E_{n-1}(x)\big) - \log \lambda(\mathcal E_n(x)\big) \le h_{\tilde \mu}(S) + \eta \sqrt n.
\end{equation}
For each $n \ge 1$ put
\begin{align}\label{eq:11}
B_n = \Big\{ (\omega,x) \in I^\mathbb N \times [0,1) \, : \, \frac{1}{2} < \frac{m_{\mathcal T,S}(k,\omega,x)}{k} \frac{h_{\tilde \mu}(S)}{h^{\text{fib}}(\mathcal T)} < 2  \quad  \forall \, k \geq n \Big\}.
\end{align}
From Theorem \ref{t:randomlochs} it follows that there exists an integer $n_1 > 2n_0 \frac{h_{\tilde \mu}(S)}{h^{\text{fib}}(\mathcal T)}$ such that $\mathbf P \times \lambda(B_{n_1}) > 1 - \frac{\tilde{\varepsilon}}{3}$. Note from \eqref{eq:11} that for each $(\omega,x) \in B_{n_1}$ we have $n_0 < \frac{1}{2} n_1 \frac{h^{\text{fib}}(\mathcal T)}{h_{\tilde \mu}(S)} < m_{\mathcal T,S}(n_1,\omega,x)$. Therefore, it follows from \eqref{eq:10} that, for each $(\omega,x) \in B_{n_1} \setminus A_{n_0}$ and for each $n > n_1$,
\begin{align}\label{eq:12}
\log \lambda\big(\mathcal E_{m_{\mathcal T,S}(n,\omega,x)}(x)\big) \le h_{\tilde \mu}(S) + \eta \sqrt{m_{\mathcal T,S}(n,\omega,x)+1} + \log \lambda\big(\mathcal E_{m_{\mathcal T,S}(n,\omega,x)+1}(x)\big).
\end{align}
For each $n \ge 1$ and interval $E \in \mathcal E_n$ use $\partial E$ to denote the boundary of $E$, i.e., the collection of its two endpoints, and use $\text{dist}(x,\partial E) = \inf \{ |x-a| \, : \, a \in \partial E \}$ to denote the distance from $x$ to the nearest boundary point of $E$. For each $n \in \mathbb N$ and $(\omega,x) \in I^\mathbb N \times [0,1)$ we have that $\alpha_{\omega,n}(x) \nsubseteq \mathcal E_{m_{\mathcal T,S}(n,\omega,x)+1}(x)$, so $\text{dist}(x, \partial \mathcal E_{m_{\mathcal T,S}(n,\omega,x)+1}(x)) \le \lambda(\alpha_{\omega,n}(x))$ and with \eqref{eq:12} we obtain for each $(\omega,x) \in B_{n_1} \setminus A_{n_0}$ and each $n > n_1$ that
\begin{multline}\label{eq:14}
\frac{\log \lambda\big(\mathcal E_{m_{\mathcal T, S}(n,\omega,x)}(x)\big) - \log \lambda\big(\alpha_{\omega,n}(x)\big)}{\sqrt{n}} \\
\le \frac{\log \lambda\big(\mathcal E_{m_{\mathcal T, S}(n,\omega,x)+1}(x)\big) - \log \text{dist}(x, \partial \mathcal E_{m_{\mathcal T,S}(n,\omega,x)+1}(x))}{\sqrt{n}}   + \frac{h_{\tilde \mu}(S)}{\sqrt{n}} +  \eta \sqrt{\frac{m_{\mathcal T, S}(n,\omega,x)+1}{n}}.
\end{multline}
For each $n \in \mathbb{N}$ and interval $E \in \mathcal E_n$, we define a new interval $E'$ by removing from both ends of $E$ an interval of length $\frac{\tilde{\varepsilon}}{\pi^2 n^2}\lambda(E)$. Furthermore, we define
\begin{align*}
C = [0,1) \backslash \Big( \bigcup_{n \in \mathbb{N}} \bigcup_{E \in \mathcal{E}_n} E \backslash E'\Big).
\end{align*}
Then
\begin{align*}
\lambda(C) \geq 1 - \sum_{n \in \mathbb{N}} \sum_{E \in \mathcal{E}_n} \frac{2\tilde{\varepsilon}}{\pi^2 n^2}\lambda(E) = 1- \frac{\tilde{\varepsilon}}{3}, \\
\end{align*}
so $\mathbf{P} \times \lambda(\big(B_{n_1} \cap (I^\mathbb N \times C) \big) \setminus A_{n_0}\big) \geq 1 - \tilde{\varepsilon}$. For each $(\omega,x) \in I^{\mathbb{N}} \times C$ and $n \in \mathbb{N}$ we have the bound
\[ \text{dist}( x, \partial \mathcal E_{m_{\mathcal T, S}(n,\omega,x)+1}(x)) \ge \frac{\tilde{\varepsilon}}{\pi^2 n^2}\lambda(\mathcal E_{m_{\mathcal T, S}(n,\omega,x)+1}(x)).\]
Combining this with \eqref{eq:11} and \eqref{eq:14} gives for each integer $n > n_1$ and each $(\omega,x) \in \big(B_{n_1} \cap (I^\mathbb N \times C) \big) \setminus A_{n_0}$ that
\begin{align}\label{eq:15}
\frac{\log \lambda\big(\mathcal E_{m_{\mathcal T, S}(n,\omega,x)}(x)\big) - \log \lambda\big(\alpha_{\omega,n}(x)\big)}{\sqrt{n}} \leq \frac{|\log \frac{\pi^2 n^2}{\tilde{\varepsilon}}| + h_{\tilde \mu}(S)}{\sqrt{n}} + \eta \sqrt{\frac{2h^{\text{fib}}(\mathcal T)}{h_{\tilde \mu}(S)} + \frac{1}{n}}.
\end{align}
We now take an integer $N > n_1$ large enough such that for each integer $n \geq N$ the right-hand side of \eqref{eq:15} is bounded by $\eta \sqrt{\frac{3h^{\text{fib}}(\mathcal T)}{h_{\tilde \mu}(S)}} = \varepsilon$. This yields the lemma.
\end{proof}

Furthermore, we consider random number systems with the following property.

\begin{defn}\label{d:clt}
Let $\mathcal T = (I, \mathbf P, \{T_i\}_{i \in I},\mu, \{ \alpha_i\}_{i \in I})$ be a random number system. We say that $\mathcal T$ satisfies the {\em CLT-property} if there is some $\sigma > 0$ such that
\[ \lim_{n \rightarrow \infty} \mu\Big( \Big\{ (\omega,x) \in I^\mathbb N \times [0,1) \, : \, \frac{-\log \lambda(\alpha_{\omega,n}(x)) - n h^{\text{fib}}(\mathcal T)}{\sigma \sqrt{n}} \leq u \Big\} \Big) = \frac{1}{\sqrt{2\pi}}\int_{-\infty}^u e^{-t^2 / 2} dt\]
holds for each $u \in \mathbb{R}$.
\end{defn}

With the properties from Definitions~\ref{d:zero} and \ref{d:clt} we obtain the following result.
\begin{theorem}\label{t:clt}
Let $\mathcal T = (I, \mathbf P, \{T_i\}_{i \in I}, \mu, \{ \alpha_i\}_{i \in I})$ be a random number system satisfying the CLT-property with constant $\sigma>0$ and let $(S, \tilde \mu, \mathcal E)$ be an NTFM that satisfies the zero-property. Assume that $h^{\mathrm{fib}}(\mathcal T), h_{\tilde \mu}(S) \in (0, \infty)$. Then for all $u \in \mathbb{R}$,
\[ \lim_{n \rightarrow \infty} \mu\Big(\Big\{(\omega,x) \in I^\mathbb N \times [0,1)\, :\,  \frac{m_{\mathcal T,S}(n,\omega,x) - n\frac{h^{\text{fib}}(\mathcal T)}{h_{\tilde \mu}(S)}}{\kappa \sqrt n} \leq u\Big\}\Big) = \frac1{\sqrt{2 \pi}} \int_{-\infty}^u e^{-t^2/2} \, dt,\]
where $\kappa = \frac{\sigma}{h_{\tilde \mu}(S)}$.
\end{theorem}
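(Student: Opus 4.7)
The plan is to derive the CLT for $m_{\mathcal T,S}(n,\omega,x)$ by transporting the CLT for $-\log\lambda(\alpha_{\omega,n}(x))$ (provided by the CLT-property of $\mathcal T$) across two asymptotic identifications: one that relates $\log\lambda(\alpha_{\omega,n}(x))$ to $\log\lambda(\mathcal E_{m}(x))$ with $m = m_{\mathcal T,S}(n,\omega,x)$ (Lemma~\ref{lemma0.7}), and one that relates $-\log\lambda(\mathcal E_m(x))$ to $m\,h_{\tilde\mu}(S)$ (the zero-property of $S$). Composing these converts the Gaussian fluctuations of the former into Gaussian fluctuations of $m$, with $\sigma$ scaled by $h_{\tilde\mu}(S)$ to produce $\kappa$.

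Concretely, writing $m = m_{\mathcal T,S}(n,\omega,x)$, the first step is to split
\[
-\log\lambda(\alpha_{\omega,n}(x)) = -\log\lambda(\mathcal E_m(x)) + \bigl(\log\lambda(\mathcal E_m(x)) - \log\lambda(\alpha_{\omega,n}(x))\bigr).
\]
Lemma~\ref{lemma0.7} ensures the bracketed term is $o(\sqrt n)$ in $\mu$-probability. Theorem~\ref{t:randomlochs} gives $m/n \to h^{\mathrm{fib}}(\mathcal T)/h_{\tilde\mu}(S) > 0$ for $\mu$-a.e.\ $(\omega,x)$, so $m \to \infty$ and $\sqrt m = O(\sqrt n)$ hold $\mu$-a.e. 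Applying the $\lambda$-a.e.\ (hence $\mu$-a.e., by (r6)) zero-property along this random index yields
\[
-\log\lambda(\mathcal E_m(x)) - m\,h_{\tilde\mu}(S) = o(\sqrt m) = o(\sqrt n) \qquad \mu\text{-a.e.,}
\]
and in particular in $\mu$-probability. Combining the two estimates gives
\[
-\log\lambda(\alpha_{\omega,n}(x)) - m\,h_{\tilde\mu}(S) = o(\sqrt n) \quad\text{in }\mu\text{-probability.}
\]

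Subtracting $n\,h^{\mathrm{fib}}(\mathcal T)$ from both sides and dividing by $\sigma\sqrt n$ rearranges this into
\[
\frac{m - n\,h^{\mathrm{fib}}(\mathcal T)/h_{\tilde\mu}(S)}{\kappa\sqrt n} \;=\; \frac{-\log\lambda(\alpha_{\omega,n}(x)) - n\,h^{\mathrm{fib}}(\mathcal T)}{\sigma\sqrt n} + o(1)
\]
in $\mu$-probability, with $\kappa = \sigma/h_{\tilde\mu}(S)$. By the CLT-property of $\mathcal T$, the first term on the right converges in distribution under $\mu$ to $N(0,1)$; Slutsky's theorem then transfers this convergence to the left-hand side, which is the desired conclusion. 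The main genuine difficulty has in fact already been absorbed into Lemma~\ref{lemma0.7}: one must show that the gap $\log\lambda(\mathcal E_m(x)) - \log\lambda(\alpha_{\omega,n}(x))$ is of smaller order than $\sqrt n$ rather than only $O(1)$, which is what made the boundary-distance trick (removing a narrow slice from each $\mathcal E_n$-interval) necessary. Once that lemma is granted, the only remaining care is that substituting the random index $m$ into the pointwise zero-property is legitimate, which it is because $m/n$ is bounded in probability by Random Lochs.
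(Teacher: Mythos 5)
Your proposal is correct and follows essentially the same route as the paper: the same three-way decomposition into the CLT-property term, the Lemma~\ref{lemma0.7} gap term, and the zero-property term evaluated along the random index $m_{\mathcal T,S}(n,\omega,x)$ (justified via Theorem~\ref{t:randomlochs}), with the final transfer of distributional convergence done by Slutsky, which the paper simply carries out by hand with a $\limsup$/$\liminf$ squeeze.
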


\begin{proof} 
Fix some $u \in \mathbb R$. We rewrite
\begin{equation}\begin{split}\label{eq:41}
\frac{m_{\mathcal T,S}(n,\omega,x)-n \frac{h^{\text{fib}}(\mathcal T)}{h_{\tilde \mu}(S)}}{\kappa \sqrt n} =\ & \frac{-\log \lambda(\alpha_{\omega,n}(x)) - n h^{\text{fib}}(\mathcal T)}{h_{\tilde \mu}(S)\kappa \sqrt n}\\
& + \frac{\log \lambda(\alpha_{\omega,n}(x)) - \log \lambda(\mathcal E_{m_{\mathcal T,S}(n,\omega,x)}(x))}{h_{\tilde \mu}(S) \kappa \sqrt n} \\
 & + \frac{\log \lambda(\mathcal E_{m_{\mathcal T,S}(n,\omega,x)}(x)) + h_{\tilde \mu}(S) m_{\mathcal T,S}(n,\omega,x) }{h_{\tilde \mu}(S) \kappa \sqrt n}.
\end{split}\end{equation}
The last term can be written as
\begin{equation}\begin{split}\label{eq:42}
\frac{\log \lambda(\mathcal E_{m_{\mathcal T,S}(n,\omega,x)}(x))+ h_{\tilde \mu}(S) m_{\mathcal T,S}(n,\omega,x) }{h_{\tilde \mu}(S) \kappa \sqrt n} &\\
= \frac{1}{h_{\tilde \mu}(S) \kappa} \sqrt{\frac{m_{\mathcal T,S}(n,\omega,x)}{n}} & \frac{\log \lambda(\mathcal E_{m_{\mathcal T,S}(n,\omega,x)}(x)) + h_{\tilde \mu}(S) m_{\mathcal T,S}(n,\omega,x)}{\sqrt{m_{\mathcal T,S}(n,\omega,x)}}.
\end{split}\end{equation}
From Theorem~\ref{t:randomlochs} we know that $\lim_{n \to \infty} \frac{m_{\mathcal T,S}(n,\omega,x)}{n} = \frac{h^{\text{fib}}(\mathcal T)}{h_{\tilde \mu}(S)} < \infty$ for $\mathbf P \times \lambda$-a.e.~$(\omega,x) \in I^\mathbb N \times [0,1)$. Since $\lim_{n \to \infty} m_{\mathcal T,S}(n,\omega,x) = \infty$ for $\mathbf P \times \lambda$-a.e.~$(\omega,x) \in I^\mathbb N \times [0,1)$, it then follows from the zero-property of $S$ that \eqref{eq:42} converges to 0 as $n \rightarrow \infty$ $\mathbf P \times \lambda$-a.e.~and thus also $\mu$-a.e. Hence it converges in $\mu$-probability as well. Furthermore, we know from Lemma~\ref{lemma0.7} that the second term on the right-hand side of \eqref{eq:41} converges to 0 as $n \rightarrow \infty$ in $\mu$-probability. 

\vskip .2cm
Define three sequences of random variables $(X_n)_{n \ge 1}, (Y_n)_{n \ge 1}$ and $(Z_n)_{n \ge 1}$ on $I^\mathbb N \times [0,1)$ by setting
\[ \begin{split}
X_n=\ & \frac{m_{\mathcal T,S}(n,\omega,x)-n \frac{h^{\text{fib}}(\mathcal T)}{h_{\tilde \mu}(S)}}{\kappa \sqrt n},\\
Y_n= \ & \frac{-\log \lambda(\alpha_{\omega,n}(x)) - n h^{\text{fib}}(\mathcal T)}{h_{\tilde \mu}(S)\kappa \sqrt n},\\
Z_n= \ & \frac{\log \lambda(\alpha_{\omega,n}(x)) - \log \lambda(\mathcal E_{m_{\mathcal T,S}(n,\omega,x)}(x))}{h_{\tilde \mu}(S) \kappa \sqrt n} \\
 & + \frac{\log \lambda(\mathcal E_{m_{\mathcal T,S}(n,\omega,x)}(x)) + h_{\tilde \mu}(S) m_{\mathcal T,S}(n,\omega,x) }{h_{\tilde \mu}(S) \kappa \sqrt n}.
\end{split}\]
Then by the above for each $\varepsilon >0$, $\lim_{n \to \infty} \mu(|Z_n| > \varepsilon )=0$ and since $\mathcal T$ satisfies the CLT-property for each $u \in \mathbb R$ it holds that
\[ \lim_{n \to \infty} \mu(Y_n \le u) = \frac{1}{\sqrt{2\pi}}\int_{-\infty}^u e^{-t^2 / 2} dt.\]
Fix some $u \in \mathbb R$ and some $\varepsilon >0$. We are interested in $\lim_{n \to \infty} \mu(X_n \le u)$. From \eqref{eq:41} we see that
\[ \begin{split}
\mu(X_n \le u) = \mu(Y_n \le u-Z_n) = \mu(Y_n \le u-Z_n, \, |Z_n| \le \varepsilon) + \mu(Y_n \le u-Z_n, \, |Z_n|> \varepsilon).
\end{split}\]
Since $\lim_{n \to \infty} \mu(Y_n \le u-Z_n, \, |Z_n|> \varepsilon)=0$ we get
\[ \begin{split}
\limsup_{n \to \infty} \mu(X_n \le u) =\ & \limsup_{n \to \infty}  \mu(Y_n \le u-Z_n, \, |Z_n| \le \varepsilon)\\
\le \ & \limsup_{n \to \infty} \mu(Y_n \le u+\varepsilon)\\
= \ & \frac{1}{\sqrt{2\pi}}\int_{-\infty}^{u+\varepsilon} e^{-t^2 / 2} dt.
\end{split}\]
On the other hand,
\[ \begin{split}
\liminf_{n \to \infty} \mu(X_n \le u) =\ & \liminf_{n \to \infty}  \mu(Y_n \le u-Z_n, \, |Z_n| \le \varepsilon)\\
\ge \  & \liminf_{n \to \infty} \mu(Y_n \le u-\varepsilon,  \, |Z_n| \le \varepsilon)\\
\ge \ & \liminf_{n \to \infty} \big( \mu(Y_n \le u-\varepsilon) - \mu( |Z_n| > \varepsilon) \big)\\
= \ & \frac{1}{\sqrt{2\pi}}\int_{-\infty}^{u-\varepsilon} e^{-t^2 / 2} dt.
\end{split}\]
Since this holds for all $\varepsilon>0$ we get the result.
\end{proof} 

We will now identify a class of random number systems that satisfies the CLT property. For this, recall that a $C^3$ map $T: J \rightarrow J$ on an interval $J$ is said to have \emph{non-positive Schwarzian derivative} if for each $x \in J$ we have $DT(x) \neq 0$ and the \emph{Schwarzian derivative} of $T$ at $x$ given by
\[ \mathbf{S}T(x) = \frac{D^3 T(x)}{DT(x)}-\frac{3}{2} \Big(\frac{D^2 T(x)}{DT(x)}\Big)^2\]
satisfies $\mathbf{S}T(x) \leq 0$. A consequence of $\mathbf{S}T(x) \leq 0$ for all $x \in J$ is that $T$ is monotone on $J$. Furthermore, an easy calculation shows that the Schwarzian derivative of the composition of two transformations $T_1,T_2: J \rightarrow J$ satisfies $\mathbf{S}(T_2 \circ T_1) \leq 0$ provided $\mathbf{S}T_1 \leq 0$ and $\mathbf{S}T_2 \leq 0$.

\vskip .2cm
Now, let $\{T_i: [0,1) \rightarrow [0,1)\}_{i \in I}$ be a countable collection of transformations that satisfy the following two properties:
\vskip .1cm
(p1) For each $i \in I$ there exists an interval partition $\alpha_i $ of $[0,1)$, such that for each $A \in \alpha_i$, $T_i|_A$ has non-positive Schwarzian derivative and $T_i(A) = [0,1)$.
\vskip .1cm
(p2) There exist $1 < K \le M < \infty$ such that, for all $x \in [0,1)$ and all $i \in I$,
\begin{align}\label{eq36}
K \leq |DT_i(x)| \leq M.
\end{align}
In particular, each $T_i$ is expanding and has finitely many onto branches (so each partition $\alpha_i$ has at most finitely many non-empty intervals) and $T_i|_A$ is $C^3$ for each $A \in \alpha_i$. We let $\mathcal D$ denote the set of all collections $\{ T_i \}_{i \in I}$ satisfying conditions (p1)~and (p2)~for some countable index set $I$ and show that each element of $\mathcal D$ gives a random number system, that under some additional assumptions satisfies the CLT-property.

\begin{prop}\label{prop3.1}
Let $\{ T_i \}_{i \in I} \in \mathcal D$, $\{ \alpha_i \}_{i \in I}$ the set of partitions given by Property (p1)~and $\mathbf P$ be a Bernoulli measure on $(I^\mathbb N, \mathcal B_I^\mathbb N)$. Then there exists a unique measure $\mu$ such that $\mathcal{T} = (I,\mathbf{P},\{T_i\}_{i \in I}, \mu, \{\alpha_i\}_{i \in I})$ is a random number system. Moreover, $\mathcal T$ satisfies the CLT-property if and only if for each measurable function $\psi: I^{\mathbb{N}} \times [0,1) \rightarrow \mathbb{R}$ we have $\varphi \circ F \neq \psi \circ F - \psi$, where $F$ is the skew product associated to $\{T_i\}_{i \in I}$ and $\varphi: I^{\mathbb{N}} \times [0,1) \rightarrow \mathbb{R}$ is given by
\begin{align}
\varphi(\omega,x) = \log |DT_{\omega_1}(x)| -  h^{\mathrm{fib}}(\mathcal{T}).
\end{align}
\end{prop}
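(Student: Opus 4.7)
The argument naturally splits into two parts: constructing the $F$-invariant measure $\mu$ that turns $\mathcal T$ into a random number system, and then translating the CLT-property into a classical Birkhoff CLT whose nondegeneracy is precisely the stated non-coboundary condition.

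\emph{Existence of $\mathcal T$.} Assumption (p2) forces each partition $\alpha_i$ to contain only finitely many nonempty intervals (each full branch has length in $[1/M,1/K]$), so $H_\mu(\Delta)\le\log M<\infty$, and properties (r1)--(r5), (r7) are routine from (p1)--(p2) and countability of $I$: in particular, $|DT_\omega^n|\ge K^n$ forces $\lambda(\alpha_{\omega,n}(x))\le K^{-n}$, giving (r5). For (r6) one invokes the classical theory of i.i.d.\ random uniformly expanding interval maps (Pelikan, Morita, Buzzi): the annealed transfer operator is quasi-compact on $BV([0,1))$ with a simple dominant eigenvalue $1$, yielding a unique ergodic $F$-invariant probability measure $\mu$ equivalent to $\mathbf P\times\lambda$, whose density is bounded away from $0$ and $\infty$. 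Theorem~\ref{t:main}(iii) then gives $h^{\mathrm{fib}}(\mathcal T)\in[\log K,\log M]\subset(0,\infty)$.

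\emph{Reduction to Birkhoff sums.} The bridge from cylinder sizes to $\varphi$ is bounded distortion. Since non-positive Schwarzian derivative is preserved under composition and $T_\omega^n$ restricted to an atom of $\alpha_{\omega,n}$ is a monotone bijection onto $[0,1)$, a standard Koebe-type argument (or a geometric-series distortion bound using $|DT_\omega^n|\ge K^n$ together with the uniform $C^2$-control inherited from the finite family of branches of each $T_i$) yields a constant $C\ge 1$, independent of $n$ and $\omega$, such that $\bigl|\log|DT_\omega^n(x)|-\log|DT_\omega^n(y)|\bigr|\le C$ whenever $x,y$ lie in the same atom of $\alpha_{\omega,n}$. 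Combined with the mean value theorem on $T_\omega^n:\alpha_{\omega,n}(x)\to[0,1)$, this gives $\bigl|\log\lambda(\alpha_{\omega,n}(x))+\log|DT_\omega^n(x)|\bigr|\le C$, and writing the log-derivative as a Birkhoff sum we conclude
\begin{equation*}
-\log\lambda(\alpha_{\omega,n}(x))-n\,h^{\mathrm{fib}}(\mathcal T)=\sum_{k=0}^{n-1}\varphi\circ F^k(\omega,x)+O(1)
\end{equation*}
uniformly in $(\omega,x)$. Hence the CLT-property for $\mathcal T$ with constant $\sigma$ is equivalent to $n^{-1/2}S_n\varphi\to N(0,\sigma^2)$ in distribution under $\mu$, where $S_n\varphi=\sum_{k=0}^{n-1}\varphi\circ F^k$, with $\sigma>0$.

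\emph{CLT and coboundary dichotomy.} The observable $\varphi$ is bounded by $\max(|\log K|,\log M)$ and is of bounded variation in $x$ uniformly in $\omega$. The spectral gap from the first step runs the Nagaev--Guivarc'h / Gordin--Liverani machinery for random uniformly expanding systems with Bernoulli base (e.g.~Aimino--Nicol--Vaienti, Hafouta--Kifer), producing a CLT for $S_n\varphi$ with asymptotic variance $\sigma^2=\lim_n n^{-1}\int(S_n\varphi)^2\,d\mu\in[0,\infty)$ and the classical dichotomy: $\sigma^2>0$ if and only if $\varphi\circ F$ is not a measurable coboundary for $F$, that is, there is no measurable $\psi$ with $\varphi\circ F=\psi\circ F-\psi$. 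This matches the proposition's condition exactly.

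\emph{Main obstacle.} The subtlest point is identifying the correct non-coboundary criterion on the one-sided, non-invertible system $(I^{\mathbb N}\times[0,1),F,\mu)$: standard CLT results are usually stated on the invertible natural extension $\hat F$. This is handled by pulling the measurable coboundary condition back and forth between $(F,\mu)$ and $(\hat F,\hat\mu)$ via Proposition~\ref{p:alejan}, using that the canonical factor map preserves coboundary status. Verifying the precise regularity of $\varphi$ and the quasi-compactness of the annealed transfer operator on $BV$ in the present i.i.d.\ setting is routine but requires care in the bounded distortion and uniform $C^2$ estimates above.
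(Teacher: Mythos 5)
Your proposal is correct and its skeleton matches the paper's: (a) build $\mu$ from uniform expansion plus a Schwarzian/Koebe distortion bound; (b) use that distortion bound to show $-\log\lambda(\alpha_{\omega,n}(x))=\log|DT_\omega^n(x)|+O(1)$ uniformly, so the CLT-property is equivalent to a CLT for the Birkhoff sums of $\varphi$ (centred via Theorem~\ref{t:main}(iii)); (c) invoke a CLT with the variance--coboundary dichotomy. The difference is the black box: the paper runs everything through Young \cite{Y99}, treating the whole space as a one-step tower, computing $JF(\omega,x)=DT_{\omega_1}(x)/p_{\omega_1}$ and verifying the separation-time H\"older conditions \eqref{q:young1} for existence (Theorem 1 of \cite{Y99}) and \eqref{q:young2} for the observable $\varphi$ (Theorem 4 of \cite{Y99}), which delivers existence, the CLT and the coboundary dichotomy in one package; you instead cite quasi-compactness of the annealed transfer operator on $BV$ plus Nagaev--Guivarc'h. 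Both are legitimate, though your route requires separately establishing the Lasota--Yorke inequality (iterating if $K\le 2$) and membership of $\varphi$ in the Banach space, whereas the paper's conditions are checked directly from \eqref{q:koebe1}. Two small caveats on your write-up: the alternative ``geometric-series distortion bound using uniform $C^2$-control'' is not available from (p1)--(p2) when $I$ is infinite, since only the first derivatives are uniformly bounded over $i\in I$ --- the Koebe Principle (which needs only non-positive Schwarzian and full branches) is the argument that actually works, and is what the paper uses; and the worry about transferring the coboundary condition to the natural extension $\hat F$ via Proposition~\ref{p:alejan} is unnecessary, since Young's Theorem 4 (like the spectral method) applies directly to the non-invertible system $(F,\mu)$ and states the dichotomy in exactly the form $\varphi\circ F\neq\psi\circ F-\psi$ appearing in the proposition.
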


To prove Proposition~\ref{prop3.1} we use two theorems by Young \cite{Y99}. The results from \cite{Y99} are formulated for Young towers, i.e., extensions of induced systems for suitable return time maps. We will, however, apply them to the system itself. That is, we will take the whole space $I^\mathbb N \times [0,1)$ as the inducing domain and as a consequence the return time function $R: I^\mathbb N \times [0,1) \to \mathbb N$ will have $R(\omega,x)=1$ for each $(\omega,x)$. In particular $\int_{I^\mathbb N \times [0,1)} R \, d \mathbf P \times \lambda =1$. For the convenience of the reader, we will reformulate the results from \cite{Y99} that are relevant for the proof of Proposition~\ref{prop3.1} here for our setting, together with the necessary conditions.

\vskip .2cm
The skew product $F$ maps each element $[i] \times A$, $i \in I$ and $A \in \alpha_i$, bijectively onto $I^\mathbb N \times [0,1)$. Moreover, both $F|_{[i] \times A}$ and its inverse are non-singular with respect to $\mathbf P \times \lambda$ (giving (r4)). Hence, the Jacobian $JF$ with respect to $\mathbf P \times \lambda$ exists and is positive $\mathbf P \times \lambda$-a.e. By condition (p2)~the collection $\{ [i] \times A \, : \, i \in I, \, A \in \alpha_i\}$ generates the $\sigma$-algebra $\mathcal B_I^\mathbb N \times \mathcal B$ (giving (r5)). For each $(\omega,x), (\tilde \omega,y) \in I^\mathbb N \times [0,1)$ write $s\big( (\omega,x), (\tilde \omega,y)\big)$ for the smallest $n \geq 0$ such that $F^n(\omega,x)$ and $F^n(\tilde \omega,y)$ lie in distinct sets $[i] \times A$. The results from \cite[Theorem 1]{Y99} then imply, among other things, the following: if there are $C_1>0$, $\eta \in (0,1)$ such that for all $[i] \times A$ and all $(\omega,x), (\tilde \omega,y) \in [i] \times A$ it holds that
\begin{equation}\label{q:young1}
\left| \frac{JF(\omega,x)}{JF(\tilde \omega,y)} -1 \right| \le C_1\eta^{s(F(\omega,x), F(\tilde \omega,y))},
\end{equation}
then $F$ admits an invariant and ergodic invariant probability measure $\mu$ that is absolutely continuous with respect to $\mathbf P \times \lambda$ with a density that is bounded away from 0. We will use this to show that each $\{ T_i \}_{i \in I} \in \mathcal D$ yields a random number system.

\vskip .2cm
For the statement about the CLT-property in Proposition~\ref{prop3.1} we apply \cite[Theorem 4]{Y99} to the function $\varphi$. For this we need to verify that $\int_{I^\mathbb N \times [0,1)} \varphi \, d\mu =0$ and that there is a constant $C_2>0$ such that
\begin{equation}\label{q:young2}
|\varphi(\omega,x)-\varphi(\tilde \omega,y)| \le C_2 \eta^{s((\omega,x),(\tilde \omega,y))}
\end{equation}
for all $(\omega,x), (\tilde \omega,y) \in I^\mathbb N \times [0,1)$, where $\eta$ is the constant from \eqref{q:young1}. Under these conditions \cite[Theorem 4]{Y99} states that the sequence $\big(\frac1{\sqrt n} \sum_{i=0}^{n-1} \varphi \circ F^i \big)_n$ converges to a normal distribution with mean $0$ and variance $\sigma^2$ for some $\sigma>0$ in distribution with respect to $\mu$ if and only if $\varphi \circ F \neq \psi \circ F - \psi$ for any measurable function $\psi: I^{\mathbb{N}} \times [0,1) \to \mathbb R$.

\begin{proof}
Let $\{ T_i \}_{i \in I} \in \mathcal D$, $\{ \alpha_i \}_{i \in I}$ the set of partitions from Property (p1)~and $\mathbf{P}$ a Bernoulli measure on $(I^\mathbb N, \mathcal B_I^\mathbb N)$ with probability vector $(p_i)_{i \in I}$. A suitable invariant measure $\mu$ for the skew product $F$ is obtained from \cite[Theorem 1]{Y99} once we show that \eqref{q:young1} holds. First note that (r1), (r2), (r3) follow straightforwardly and (r4), (r5) were already addressed above. By Property (p2)~the partition $\Delta$ is finite, yielding (r7) once we have $\mu$. Hence, we focus on (r6).

\vskip .2cm
Since each branch of each $T_i$ has non-positive Schwarzian derivative and $\inf_{(i,x)} |DT_i(x)| > 1$, it follows from the Koebe Principle (see e.g.~\cite[Section 4.1]{dMvS93}) that there exists $K_1, K_2 > 0$ such that for each $\omega \in I^{\mathbb{N}}$, $n \in \mathbb{N}$, $A \in \alpha_{\omega,n}$ and $x,y \in A$ we have
\begin{align}\label{q:koebe1}
\frac{1}{K_1} \leq \frac{DT_{\omega}^n(x)}{DT_{\omega}^n(y)} \leq K_1
\end{align}
and
\begin{align}\label{eq39}
\Big| \frac{DT_{\omega}^n(x)}{DT_{\omega}^n(y)}-1\Big| \leq K_2 \cdot \frac{|T_{\omega}^n(x)-T_{\omega}^n(y)|}{\lambda(T_{\omega}^n(A))} =  K_2 \cdot |T_{\omega}^n(x)-T_{\omega}^n(y)|.
\end{align}
For each $i\in I$ and $A \in \alpha_i$ we have for all measurable sets $E \subseteq [i]$ and $B \subseteq A$ that
\[ \mathbf{P} \times \lambda\big(F(E \times B)) = \int_{E \times B} \frac{1}{p_{\omega_1}} DT_{\omega_1}(x) \, d\mathbf{P} \times \lambda(\omega,x),\]
from which it follows that
\begin{align}\label{eq44}
JF(\omega,x) = \frac{1}{p_{\omega_1}} DT_{\omega_1}(x).
\end{align}
Combining this with \eqref{eq39} yields for $i$ and $A$ and $(\omega,x),(\tilde{\omega},y) \in [i] \times A$ that
\begin{align}\label{eq45}
\Big|\frac{JF(\omega,x)}{JF(\tilde{\omega},y)}-1\Big| = \Big| \frac{DT_i(x)}{DT_i(y)}-1\Big| \leq K_2 \cdot |T_i(x)-T_i(y)|.
\end{align}
Assume $s\big( F(\omega,x), F(\tilde \omega,y)) =n$. Then for each $2 \le k \le n+1$ we have $\omega_k = \tilde \omega_k$ and that $T_\omega^{k-1} (x)$ and $T_{\tilde \omega}^{k-1}(y) = T_\omega^{k-1}(y)$ are in the same interval of the partition $\alpha_{\omega_{k+1}}$. It thus follows from the Mean Value Theorem and property (p2)~that 
\[ K \le \min |DT_{\omega_{k+1}}| \leq \Big|\frac{T_{\omega_{k+1}}(T_\omega^k(x)) - T_{\omega_{k+1}}(T_\omega^k (y))}{T_\omega^k(x) - T_\omega^k (y)}\Big| = \Big|\frac{T_\omega^{k+1}(x) - T_\omega^{k+1}(y)}{T_\omega^k (x) - T_\omega^k (y)}\Big|.\]
We conclude that
\[ |T_i(x)-T_i(y)| \le K^{-1} |T_\omega^2 (x) - T_\omega^2 (y)| \le \cdots \le K^{-n} |T_\omega^{n+1} (x) - T_\omega^{n+1} (y)| \le K^{-n}.\]
Together with \eqref{eq45} this shows that \eqref{q:young1} holds. Hence, we obtain an $F$-invariant and ergodic measure $\mu$ that is equivalent to $\mathbf P \times \lambda$. This implies that $\mathcal{T} = (I,\mathbf{P},\{T_i\}_{i \in I}, \mu, \{\alpha_i\}_{i \in I})$ is a random number system.

\vskip .2cm
What is left is to prove the statement on the CLT-property. Note that
\[ \frac1{\sqrt n} \sum_{i=0}^{n-1} \varphi \circ F^i(\omega,x) = \frac{ \log |DT^n_\omega (x)| - n h^{\mathrm{fib}}(\mathcal T) }{\sqrt n}.\]
Since $\mathcal{T} = (I,\mathbf{P},\{T_i\}_{i \in I}, \mu, \{\alpha_i\}_{i \in I})$ is a random number system, Theorem~\ref{t:main}(iii) implies that $\int_{I^\mathbb N \times [0,1)} \varphi \, d\mu =0$. Assume for a moment that also condition \eqref{q:young2} holds, i.e., that we satisfy the conditions of \cite[Theorem 4]{Y99}. It then follows that there is some $\sigma >0$ such that
\[ \lim_{n \to \infty} \mu \Big( \Big\{ (\omega,x) \in I^\mathbb N \times [0,1) \, : \, \frac{\log|D T_\omega^n (x)| -nh^{\mathrm{fib}}(\mathcal T)}{\sigma \sqrt n} \le u \Big\} \Big) = \frac1{\sqrt{2\pi}} \int_{-\infty}^u e^{-t^2/2} \, dt\]
if and only if $\varphi \circ F \neq \psi \circ F - \psi$ for any measurable function $\psi: I^{\mathbb{N}} \times [0,1) \to \mathbb R$. From \eqref{q:koebe1} it follows that for each $\omega \in I^{\mathbb{N}}$, $n \in \mathbb{N}$ and $x \in [0,1)$,
\begin{align*}
\lambda\big(\alpha_{\omega,n}(x)\big) \leq \frac{1}{\inf_{y \in \alpha_{\omega,n}(x)}|DT_{\omega}^n(y)|} = \frac{|DT_{\omega}^n(x)|}{\inf_{y \in \alpha_{\omega,n}(x)}|DT_{\omega}^n(y)|} \cdot |DT_{\omega}^n(x)|^{-1} \leq K_1 \cdot |DT_{\omega}^n(x)|^{-1}
\end{align*}
and similarly
\begin{align*}
\lambda\big(\alpha_{\omega,n}(x)\big) \geq \frac{1}{\sup_{y \in \alpha_{\omega,n}(x)}|DT_{\omega}^n(y)|} \geq \frac{1}{K_1} \cdot |DT_{\omega}^n(x)|^{-1}.
\end{align*}
Hence, if for each $n \ge 1$ we write
\[ X_n (\omega,x) =  \frac{-\log \lambda(\alpha_{\omega,n}(x))-nh^{\mathrm{fib}}(\mathcal T)}{\sigma \sqrt n},\]
then
\[ \frac{-\log K_1}{\sigma \sqrt n} + \frac{\log|D T_\omega^n (x)| -nh^{\mathrm{fib}}(\mathcal T)}{\sigma \sqrt n} \le X_n (\omega,x) \le  \frac{\log K_1}{\sigma \sqrt n} + \frac{\log|D T_\omega^n (x)| -nh^{\mathrm{fib}}(\mathcal T)}{\sigma \sqrt n},\]
and we see that to prove the last part of the proposition, it is enough to show that \eqref{q:young2} holds.

\vskip .2cm
Let $(\omega,x), (\tilde \omega,y) \in I^\mathbb N \times [0,1)$. We first consider the case that $s\big( (\omega,x), (\tilde \omega,y) \big)>0$. Let $i \in I$ and $A \in \alpha_i$ be such that $(\omega,x), (\tilde \omega,y) \in [i] \times A$. It then follows from \eqref{eq44} that
\[ |\varphi(\omega,x)-\varphi (\tilde \omega,y)| = \Big| \log \Big| \frac{DT_{\omega_1} (x)}{D T_{\tilde \omega_1}(y)} \Big| \Big| = \Big| \log \Big| \frac{ JF(\omega,x)}{JF(\tilde \omega,y)} \Big| \Big|.\]
Recall from the first part of the proof that
\[ \Big|\frac{JF(\omega,x)}{JF(\tilde \omega,y)}-1\Big| \leq K_2 \cdot K^{-s(F(\omega,x),F(\tilde \omega,y))}.\]
Combining this with the fact that for all $x > 0$,
\[ |\log x| \leq \max \{x-1,x^{-1} -1\} \leq \max \{ |x-1|, |x^{-1} -1|\},\]
yields that
\[ \Big|\log \Big|\frac{JF(\omega,x)}{JF(\tilde \omega,y)}\Big|\Big| \leq K_2 \cdot K^{-s(F(\omega,x),F(\tilde \omega,y))} \leq \tilde K_2 \cdot K^{-s( (\omega,x), (\tilde \omega,y))},\]
where $\tilde K_2 = K_2 \cdot K$. In case $s\big( (\omega,x), (\tilde \omega,y) \big)=0$ we just notice from \eqref{eq36} it follows that
\[ \Big|\log \Big|\frac{DT_{\omega_1}(x)}{DT_{\tilde \omega_1} (y)}\Big|\Big| \le \log M - \log K.\]
By taking $C_2 = \max\{\tilde K_2, \log M - \log K\}$ we obtain the result.
\end{proof}

\begin{remark}{\rm {(i) A natural question is whether a central limit theorem would hold when comparing two random number systems. For two random number systems $\mathcal{T} = (I, \mathbf P,  \{ T_i  \}_{i \in I}, \mu, \{ \alpha_i\}_{i \in I})$ and $\mathcal{S} = (J, \mathbf Q, \{S_j\}_{j \in J}, \rho, \{\gamma_j \}_{j \in J})$ the limit statement for an annealed\footnote{An \emph{annealed} result describes properties of the system averaged over the base space. On the other hand, a \emph{quenched} result characterises the behaviour of the system for almost all fixed realisations.} result with respect to $\mathcal{S}$ would describe a subset of $I^\mathbb N \times J^\mathbb N \times [0,1)$. One might expect that a central limit theorem with measure $\mathbf P \times \mathbf Q \times \lambda$ holds for random number systems $\mathcal T$ and $\mathcal S$ with invariant measures $\mu = \mathbf P \times \lambda$ and $\rho = \mathbf Q \times \lambda$, respectively. For a quenched result with respect to $\mathcal{S}$ the arguments for Theorem \ref{t:clt} might work if $\mathcal S$ satisfies a random zero-property, where in Definition \ref{d:zero} we replace $\mathcal{E}_n(x)$ and $h_{\tilde{\mu}}(S)$ by $\gamma_{\hat{\omega},n}(x)$ and $h^{\mathrm{fib}}(\mathcal{S})$, respectively, and ask for the limit to hold for $\mathbf Q$-a.e.~$\tilde{\omega}$.}
\vskip .1cm
(ii) The Central Limit Theorem from \cite[Corollary 2.1]{atilla} for the quantity $m_{T,S}(n,x)$ for two NTFM's $T$ and $S$ asks for $T$ to satisfy the zero-property and for $S$ to satisfy a property called the weak invariance principle, which seems to be quite strong. By asking that $S$ satisfies the zero-property, we have obtained the Central Limit Theorem under the somewhat less restrictive CLT-property on the random number system $\mathcal T$. This implies that the Central Limit Theorem from \cite[Corollary 2.1]{atilla} also holds under the assumptions that the NTFM $T$ has the CLT-property and $S$ has the zero-property.
}\end{remark}

\section{Examples involving well-known number expansions}\label{s:examples}

Below we consider some specific examples of random number systems with relations to number expansions. 

\begin{ex}[Random integer base expansions]\label{x:integers}
Consider a sequence of integers $2 \le N_1 < N_2 < N_3 < \ldots$. Set $I = \{ N_1, N_2, \ldots \}$, let $(p_i)_{i \in I}$ be a strictly positive probability vector and let $\mathbf P$ be the corresponding Bernoulli measure on $I^{\mathbb{N}}$. Assume that
\begin{equation}\label{q:entropydelta}
\quad \sum_{i\in I} p_i \log^2 i < \infty.
\end{equation}
For each $i \in I$, let $T_i (x) = i x \pmod 1$. The maps $T_i$ are non-singular and piecewise monotone and $C^1$ with respect to the partitions $\alpha_i = \{ A_{i,j}\}_{j \ge 0}$ given by
\[ A_{i,j} = \begin{cases}
\Big[ \frac{j}{i}, \frac{j+1}{i} \Big), & \text{if } 0 \le j \le i-1,\\
\emptyset , & \text{otherwise}.
\end{cases}\]
Thus, conditions (r2) and (r4) are satisfied. The countability of $I$ accounts for (r1) and (r3). All maps $T_i$ preserve $\lambda$ and are ergodic with respect to $\lambda$. The invariance of $\mathbf P \times \lambda$ for the skew product $F$ follows from a direct computation and its ergodicity follows from standard results, such as e.g.~\cite[Theorem 5.1]{morita1}. Hence, we get (r6). All maps are expanding, since $DT_i(x) \ge 2$ for all $x \in [0,1)$ and $i \in I$, which implies (r5). Finally, the $\hat{F}$-invariant measure that is obtained by applying Proposition \ref{p:alejan} to $\mathbf{P} \times \lambda$ is $\hat{\mathbf{P}} \times \lambda$, where $\hat{\mathbf{P}}$ is the Bernoulli measure on $I^{\mathbb{Z}}$ associated to $(p_i)_{i \in I}$. Hence, it follows from \eqref{q:entropydelta} that condition \eqref{eq24a} is satisfied, so $\mathcal T = (I, \mathbf P, \{ T_i \}_{i \in I}, \mathbf P\times \lambda, \{ \alpha_i\}_{i \in I})$ is a random number system and thus Theorem~\ref{t:main} applies. Combining Theorem~\ref{t:main}(i), (iii) and \eqref{q:entropydelta} then gives for $\mathbf P$-a.e.~$\omega$ and $\lambda$-a.e.~$x$ that 
\[ \begin{split}
\lim_{n \rightarrow \infty} - \frac{\log \lambda\big(\alpha_{\omega,n}(x)\big)}{n} =\ & h^{\text{fib}} (\mathcal T) =  \int_{I^\mathbb N} \int_{[0,1)} \log |DT_{\omega_1} (x)|\, d\lambda(x) d\mathbf P (\omega)\\
 =\ &  \int_{I^\mathbb N} \log \omega_1 d\mathbf P (\omega) = \sum_{i \in I} p_i \log i < \infty.
\end{split} \]
Note that the right-hand side is the weighted sum of the entropies $h_{\lambda} (T_i) = \log i$.

\vskip .2cm
Note also that the collection $\{ T_i \}_{i \in I}$ does not necessarily fall into the set $\mathcal D$ from the previous section, since there does not need to be a uniform upper bound on the derivatives of the maps $T_i$. We show that $\mathcal T$ satisfies the CLT-property nonetheless. For each $j \in \mathbb{N}$, define the random variable $X_j$ on $I^\mathbb N \times [0,1)$ as
\[ X_j(\omega,x) = -\sum_{A \in \alpha_{\omega_j}} 1_{(T_{\omega}^{j-1})^{-1}A}(x) \log \lambda(A) = -\log \lambda(\alpha_{\omega_j}(T_{\omega}^{j-1}(x))).\]
Then $\{X_j\}_{j\ge 1}$ is an i.i.d.~sequence on $(I^\mathbb N \times [0,1),\mathcal B_I^\mathbb N \times \mathcal B ,\mathbf P \times \lambda)$. Since each map $T_i$ preserves $\lambda$, we obtain
\[ \begin{split}
\mathbb{E}_{\mathbf{P} \times \lambda}(X_j) =\ & -\int_{I^\mathbb N} \int_{[0,1)} \log \lambda(\alpha_{\omega_j}(T_{\omega}^{j-1}(x))) \, d\lambda(x)d\mathbf{P}(\omega)\\
=\ & -\int_{I^\mathbb N} \int_{[0,1)} \log \lambda(\alpha_{\omega_j}(x))\, d\lambda(x)d\mathbf{P}(\omega)\\
=\ & \int_{I^\mathbb N} \log \omega_j \, d\mathbf{P}(\omega) = h^{\text{fib}}(\mathcal T).
\end{split}\]
Similarly,
\begin{equation}\label{q:sigmainteger}
\sigma^2 = \text{Var}(X_j) = \int_{I^{\mathbb{N}}} (\log \omega_j - h^{\text{fib}}(\mathcal T))^2 d\mathbf{P}(\omega) = \sum_{i\in I} p_i \log^2 i - \Big( \sum_{i\in I} p_i \log i \Big)^2.
\end{equation}
It follows from \eqref{q:entropydelta} that $\sigma^2 \in (0, \infty)$. Also $-\log \lambda(\alpha_{\omega,n}(x)) = \sum_{j=1}^n X_j(\omega,x)$, hence from the Central Limit Theorem we get
\[ \frac{-\log \lambda (\alpha_{\omega,n}(x)) - n h^{\text{fib}}(\mathcal T)}{\sigma \sqrt n} = \frac{\sum_{j=1}^n X_j - n \mathbb{E}_{\mathbf{P} \times \lambda}(X_j)}{\sigma \sqrt n} \to \mathcal N(0,1),\]
where the convergence is in distribution with respect to $\mathbf P \times \lambda$. Hence, $\mathcal T$ satisfies the CLT-property with variance $\sigma^2$ and with respect to $\mathbf P \times \lambda$.

\vskip .2cm
Recall that the digit sequence $(d^{\mathcal T}_n(\omega,x))_{n \ge 1}$ was defined in \eqref{q:digitsequence} by setting $d^{\mathcal T}_n(\omega,x)=j_n$ if $T_\omega^{n-1}(x) \in A_{\omega_n,j_n}$. For each $k \ge 1$ we can write
\[ T_\omega^k (x) = \omega_k \big( T_\omega^{k-1} (x) \big) - d^{\mathcal T}_k(\omega,x),\]
so that
\[ x = \frac{d^{\mathcal T}_1(\omega,x)}{\omega_1} + \frac{d^{\mathcal T}_2(\omega,x)}{\omega_1 \omega_2} + \cdots + \frac{d^{\mathcal T}_n(\omega,x)}{\omega_1 \cdots \omega_n} + \frac{T_\omega^n (x)}{\omega_1 \cdots \omega_n}.\]
Since $ \lim_{n \to \infty} \frac{T_\omega^n (x)}{\omega_1 \cdots \omega_n} \le \lim_{n \to \infty} \frac1{2^n} = 0$, we obtain an expansion of $x$ of the form
\[ x = \sum_{n=1}^{\infty} \frac{d^{\mathcal T}_n(\omega,x)}{\prod_{i \in I}i^{c_{n,i}(\omega)}},\]
where $c_{n,i}(\omega) = \#\{ 1\le j \le n \, : \, \omega_j=i\} $. Hence, the random number system $\mathcal T$ produces number expansions of numbers $x \in [0,1)$ in mixed integer bases $N_1, N_2, \ldots$.

\vskip .2cm
Consider the random number system $\mathcal T$ from above and another random number system of this form for $2 \le M_1 < M_2 < \ldots$ with $J= \{ M_1, M_2, \ldots \}$ and a probability vector $(q_j)_{j \in J}$ specifying the Bernoulli measure $\mathbf Q$ on $J^\mathbb N$. Assume that $(q_j)_{j \in J}$ satisfies the condition \eqref{q:entropydelta} as well. Write $ \{ S_j \}_{j \in J}$ for the maps $S_j (x) = j x \pmod 1$ and let $ \{ \gamma_j\}_{j \in J}$ be the corresponding partitions into maximal intervals on which the maps $S_j$ are monotone. For the random number systems $\mathcal T = (I^\mathbb N, \mathbf P, \{T_i\}_{i \in I}, \mathbf P \times \lambda, \{ \alpha_i\}_{i \in I})$ and $\mathcal S =  (J^\mathbb N, \mathbf Q, \{S_j\}_{j \in J}, \mathbf Q \times \lambda, \{\gamma_j \}_{j \in J})$ we obtain
\[ h^{\text{fib}} (\mathcal T) = \sum_{i\in I} p_i \log i, \quad h^{\text{fib}} (\mathcal S) = \sum_{j \in J} q_j \log j.\]
The Random Lochs' Theorem from Theorem~\ref{t:randomlochs} then states that for $\mathbf P \times \mathbf Q$-a.e.~$(\omega, \tilde \omega) \in I^\mathbb N \times J^\mathbb N $ it holds that
\[ \lim_{n \to \infty} \frac{m_{\mathcal T, \mathcal S}(n, \omega, \tilde \omega, x)}{n} = \frac{\sum_{i\in I} p_i \log i}{\sum_{j \in J} q_j \log j} \quad \lambda\text{-a.e}.\]
In other words, given $\omega, \tilde \omega$ and the first $n$ digits $d^{\mathcal T}_1 (\omega,x), \ldots , d^{\mathcal T}_n (\omega,x)$ of an unknown $x$, then typically we can determine approximately the first $n\frac{\sum_{i \in I} p_i \log i}{\sum_{j \in J} q_j \log j}$ digits of $x$ in mixed integer bases $M_1,M_2,\ldots$ generated by the random system $\mathcal S$.

\vskip .2cm
Moreover, if we take the NTFM $S(x) = M x \pmod 1$ for some integer $M \ge 2$, then from Theorem~\ref{t:randomlochs} we obtain for $\mathbf P$-a.e.~$\omega \in I^\mathbb N$,
\[ \lim_{n \to \infty} \frac{m_{\mathcal T, \mathcal S}(n, \omega, \tilde \omega, x)}{n} = \frac{\sum_{i \in I} p_i \log i}{\log M} \qquad \lambda\text{-a.e.}\]
From \cite[Section 3.2]{atilla} we know that $S$ has the zero-property, so that Theorem~\ref{t:clt} gives for each $u \in \mathbb R$ that
\[ \lim_{n \to \infty} \mathbf{P} \times \lambda \Big( \Big\{ (\omega,x) \in I^\mathbb N \times [0,1) \, : \, \frac{m_{\mathcal T, S}(n, \omega,x) - n \frac{\sum_{i\in I} p_i \log i}{\log M}}{\frac{\sigma}{\log M}\sqrt n} \le u \Big\} \Big) = \frac1{\sqrt{2\pi}} \int_{-\infty}^u e^{-t^2/2} \, dt,\]
where $\sigma$ is as in \eqref{q:sigmainteger}.
\end{ex}

\begin{remark}{\rm
Arguments almost identical to the ones in Example~\ref{x:integers} hold for any random system consisting of GLS-transformations. A GLS-transformation, see \cite{BBDK}, is a piecewise linear map $T:[0,1) \to [0,1)$ specified by an at most countable interval partition of $[0,1)$ and for each of these intervals an orientation, such that $T$ maps each interval linearly onto $[0,1)$ with the specified orientation. For example, we can fix some $N \in \{2,3,\ldots\} \cup \{\infty\}$ (the number of branches of the map), $\eta \in (0,1)$ (a lower bound on the slope of the branches) and $q = (q_i)_{i=1}^N \in (0,\eta)^N$ with $\sum_{i=1}^N q_i = 1$ (the sizes of the intervals). The transformation $T_q: [0,1) \rightarrow [0,1)$ given by
\[ T_q(x) = \sum_{i=1}^N  \frac{1}{q_i}\Big(x-\sum_{j=1}^{i-1} q_j \Big) 1_{\big[\sum_{j=1}^{i-1} q_j, \sum_{j=1}^i q_j\big)}(x)\]
is a GLS-transformation mapping each interval $\big[\sum_{j=1}^{i-1} q_j, \sum_{j=1}^i q_j\big)$ linearly and orientation preservingly onto $[0,1)$. We can set $I = \{ q = (q_i)_{i=1}^N \in (0,\eta)^N \, : \, \sum_{i=1}^N q_i = 1\}$, let $\mathbf{P}$ be a $\tau$-invariant probability measure on $I^{\mathbb N}$ and consider the family $\{T_q\}_{q \in I}$. (Note that contrary to in Example~\ref{x:integers} this $I$ is not countable.) In analogy with \eqref{q:entropydelta} we assume
\[ \int_{I^{\mathbb{N}}} \int_{[0,1)} \log^2 \lambda\big(\alpha_{\omega_1}(x)\big) \, d\lambda(x) d\mathbf{P}(\omega) < \infty. \]
Together with the partitions $\alpha_q = \{A_{q,1},A_{q,2},\ldots\}$ with $A_{q,i} = [\sum_{j=1}^{i-1} q_j, \sum_{j=1}^i q_j)$ and measure $\mathbf P \times \lambda$ the system $\mathcal T = (I^\mathbb N, \mathbf P, \{ T_q \}_{q \in I}, \mathbf P \times \lambda, \{ \alpha_q\}_{q \in I})$ is a random number system and satisfies the CLT-property with variance
\[ \sigma^2 = \int_{I^{\mathbb{N}}} \int_{[0,1)} \Big(\log \lambda\big(\alpha_{\omega_1}(x)\big)-h^{\mathrm{fib}}(\mathcal{T})\Big)^2 d\lambda(x) d\mathbf{P}(\omega) \in (0, \infty).\]
Furthermore, in a similar way as in Example~\ref{x:integers} it can be shown that
\[ h^{\mathrm{fib}}(\mathcal{T}) = \int_{I^{\mathbb{N}}} h_{\lambda}(T_{\omega_1})\, d\mathbf{P}(\omega).\]
Number expansions obtained from this system are random versions of what are called {\em generalised L\"uroth series expansions}. A particular instance of this class was studied in \cite{KM}.
}\end{remark}

\begin{figure}[h]
\centering
\subfigure[$T_2$ and $T_3$.]{
\begin{tikzpicture}[scale=2]
\draw[white] (-.3,0)--(1.3,0);
\draw[thick](0,0)node[below]{\small 0}--(.33,0)node[below]{\small $\frac13$}--(.5,0)node[below]{\small $\frac12$}--(.67,0)node[below]{\small $\frac23$}--(1,0)node[below]{\small 1}--(1,1)--(0,1)node[left]{\small 1}--(0,0);
\draw[thick, blue!50] (0,0)--(.5,1)(.5,0)--(1,1);
\draw[thick, green!50!black] (0,0)--(.33,1)(.33,0)--(.67,1)(.67,0)--(1,1);
\draw[dotted](.5,0)--(.5,1)(.33,0)--(.33,1)(.67,0)--(.67,1);
\end{tikzpicture}}
\hspace{1cm}
\subfigure[$T_0$ on the left and $T_1$ on the right.]{
\begin{tikzpicture}[scale=2]
\filldraw[fill=green!50!black, draw=green!70!black] (0,0) rectangle (.09,1);
\draw[dotted](.5,0)--(.5,1)(.33,0)--(.33,1)(.25,0)--(.25,1)(.2,0)--(.2,1);
\draw[thick, green!50!black] (1,0) .. controls (.75,.33) and (.6,.67) .. (.5,1);
\draw[thick, green!50!black] (.5,0) .. controls (.42,.38) and (.36,.78) .. (.33,1);
\draw[thick, green!50!black] (.33,0) .. controls (.29,.45) and (.27,.7) .. (.25,1);
\draw[thick, green!50!black] (.25,0) .. controls (.225,.25) and (.21,.5) .. (.2,1);
\draw[thick, green!50!black] (.2,0) .. controls (.185,.25) and (.175,.5) .. (.17,1);
\draw[thick, green!50!black] (.17,0) .. controls (.155,.25) and (.145,.5) .. (.14,1);
\draw[thick, green!50!black] (.14,0) .. controls (.133,.25) and (.13,.5) .. (.125,1);
\draw[thick, green!50!black] (.125,0) .. controls (.118,.25) and (.115,.5) .. (.11,1);
\draw[thick, green!50!black] (.11,0) .. controls (.105,.25) and (.102,.5) .. (.1,1);
\draw[thick, green!50!black] (.1,0) .. controls (.095,.25) and (.092,.5) .. (.09,1);

\draw[thick](0,0)node[below]{\small 0}--(.2,0)node[below]{\small $\frac15$}--(.25,0)node[below]{\small $\frac14$}--(.33,0)node[below]{\small $\frac13$}--(.5,0)node[below]{\small $\frac12$}--(1,0)node[below]{\small 1}--(1,1)--(0,1)node[left]{\small 1}--(0,0);

\filldraw[fill=blue!50, draw=blue!50] (2.41,0) rectangle (2.5,1);
\draw[thick, blue!50] (1.5,0) .. controls (1.75,.33) and (1.9,.67) .. (2,1);
\draw[thick, blue!50] (2,0) .. controls (2.08,.38) and (2.14,.78) .. (2.17,1);
\draw[thick, blue!50] (2.17,0) .. controls (2.21,.45) and (2.23,.7) .. (2.25,1);
\draw[thick, blue!50] (2.25,0) .. controls (2.275,.25) and (2.29,.5) .. (2.3,1);
\draw[thick, blue!50] (2.3,0) .. controls (2.315,.25) and (2.325,.5) .. (2.33,1);
\draw[thick, blue!50] (2.33,0) .. controls (2.345,.25) and (2.355,.5) .. (2.36,1);
\draw[thick, blue!50] (2.36,0) .. controls (2.367,.25) and (2.37,.5) .. (2.375,1);
\draw[thick, blue!50] (2.375,0) .. controls (2.377,.25) and (2.38,.5) .. (2.39,1);
\draw[thick, blue!50] (2.39,0) .. controls (2.395,.25) and (2.398,.5) .. (2.4,1);
\draw[thick, blue!50] (2.4,0) .. controls (2.405,.25) and (2.408,.5) .. (2.41,1);

\draw[dotted](2,0)--(2,1)(2.17,1)--(2.17,0)(2.25,1)--(2.25,0);

\draw[thick](1.5,0)node[below]{\small 0}--(2,0)node[below]{\small $\frac12$}--(2.17,0)node[below]{\small $\frac23$}--(2.25,0)node[below]{\small $\frac34$}--(2.5,0)node[below]{\small 1}--(2.5,1)--(1.5,1)node[left]{\small 1}--(1.5,0);
\end{tikzpicture}}
\caption{In (a) we see the graphs of the maps $T_2$ (blue) and $T_3$ (green) from Examples~\ref{x:integers}. The maps $T_0$ (Gauss) and $T_1$ (R\'enyi) from Example~\ref{x:gr} are shown in (b).}
\label{f:integerbeta}
\end{figure}
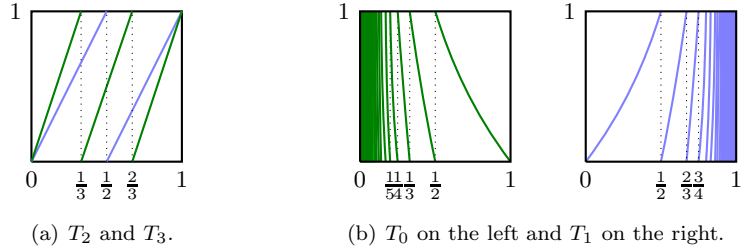

\begin{ex}[Random continued fraction expansions]\label{x:gr}
The Gauss-R\'enyi map is defined by setting $I=\{0,1\}$ with $T_0:[0,1) \to [0,1)$ given by $T_0(0)=0$ and $T_0 (x) = \frac1x \pmod 1$ for $x \neq 0$ (the Gauss map) and $T_1:[0,1) \to [0,1)$ given by $T_1(x) = T_0 (1-x)$ (the R\'enyi map). The graphs are shown in Figure~\ref{f:integerbeta}(b). Fix some $0 < p < 1$ and let $\mathbf P$ be the $(p, 1-p)$-Bernoulli measure on $I^\mathbb N$. The partitions $\alpha_0 = \{ A_{0,j}\}_{j \ge 0}$ and $\alpha_1 = \{ A_{1,j}\}_{j \ge 0}$ are given by
\[ A_{0,j} = \Big( \frac1{j+2}, \frac1{j+1} \Big] \quad \text{ and } \quad A_{1,j} = \Big[ \frac{j}{j+1}, \frac{j+1}{j+2} \Big), \quad j \ge 0.\]
In \cite{KKV17} it was proven that there exists a measure $\rho_p$ equivalent to $\lambda$ such that $\mathbf P \times \rho_p$ is invariant and ergodic with respect to the skew product $F$ with a density $\frac{d\rho_p}{d\lambda}$ that is bounded away from zero and is of bounded variation, so in particular bounded. For the system $\mathcal T = (I, \mathbf P, \{ T_i\}_{i =0,1}, \mathbf P \times \rho_p, \{ \alpha_i \}_{i=0,1})$ conditions (r1)--(r6) follow straightforwardly. For (r7), note that for each $j \ge 0$ we have
\[ H_{\mathbf P \times \rho_p} (\Delta) = \sum_{j \ge 0} p \rho_p \Big( \Big( \frac1{j+2}, \frac1{j+1} \Big] \Big) + (1-p) \rho_p \Big( \Big[ \frac{j}{j+1}, \frac{j+1}{j+2} \Big) \Big).\]
Since the density of $\rho_p$ is bounded, it follows that $H_{\mathbf P \times \rho_p} (\Delta) < \infty$. Thus (r7) holds and $\mathcal T = (I, \mathbf P, \{ T_i\}_{i =0,1}, \mathbf P \times \rho_p, \{ \alpha_i \}_{i=0,1})$ is a random number system. For the fiber entropy we obtain from Theorem~\ref{t:main}(iii) that
\[ h^{\text{fib}} (\mathcal T) = \int_{I^\mathbb N} \int_{[0,1)} \log |DT_{\omega_1} (x)| \, d\rho_p (x) d\mathbf P(\omega)= -2 \int_0^1 p\log x + (1-p)\log(1-x) \, d\rho_p (x).\]
It follows from equation (8) in \cite{KKV17} that the digit sequences $( d^\mathcal T_k (\omega,x) )_{k \ge 1}$ from this random number system give the semi-regular continued fraction expansions of real numbers $x \in [0,1]$ by
\[ x = \cfrac1{\omega_1 + d^\mathcal T_1(\omega,x) + \cfrac{(-1)^{\omega_1}}{\omega_2 + d^\mathcal T_2(\omega,x)+  \cfrac{(-1)^{\omega_2}}{\omega_3 + d^\mathcal T_3(\omega,x) + \ddots}}}.\]
\end{ex}

\begin{ex}(Random $\beta$-expansions in alternate base)\label{ex2.3}
Fix two constants $1 < \eta < \delta$ and let $J = [\eta, \delta]$. For each $\beta \in J$, let $S_\beta:[0,1) \to [0,1), \, x \mapsto \beta x \pmod 1$ be the $\beta$-transformation, which is the piecewise linear map with slope $\beta$ on the partition $\gamma_\beta = \{ C_{\beta,j}\}_{j \ge 0}$ given by
\[ C_{\beta,j} = \begin{cases}
\big[ \frac{j}{\beta}, \frac{j+1}{\beta} \big), & \text{if } 0 \le j < \lceil \beta \rceil-1\\
\big[ \frac{\lceil \beta \rceil -1}{\beta},1 \big), & \text{if } j= \lceil \beta \rceil-1,\\
\emptyset, & \text{otherwise},
\end{cases}\]
where $\lceil \beta \rceil$ indicates the smallest integer not smaller than $\beta$. See Figure~\ref{f:betas}(a) for some graphs. The study of $\beta$-transformations was initiated by R\'enyi in \cite{renyi}. The $\beta$-transformations are related to $\beta$-expansions of real numbers, which are expressions of the form
\[ x = \sum_{k \ge 1} \frac{b_k}{\beta^k}, \quad b_k \in \{0,1, \ldots, \lceil \beta \rceil -1\}.\]
Fix some periodic sequence
\[ u = (u_1, u_2, \ldots, u_m, u_1, u_2, \ldots, u_m, u_1, \ldots ) \in J^\mathbb N\] 
of period length $m \ge 2$ and consider the shift invariant measure $\mathbf Q$ on $J^\mathbb N$ defined by
\begin{equation}\label{q:Q}
\mathbf Q = \frac1m \sum_{k=0}^{m-1} \delta_{\tau^k u},
\end{equation}
where $\delta_y$ denotes the Dirac measure at the point $y \in J^{\mathbb{N}}$. We define the map $\psi: \{1,2,\ldots,m\} \times [0,1) \rightarrow J^{\mathbb{N}} \times [0,1)$ by
\[ \psi(i,x) = \big((u_i,u_{i+1},\ldots,u_m,u_1,u_2,\ldots,u_m,u_1,\ldots),x),\]
which is measurable if we put on $\{1,2,\ldots,m\} \times [0,1)$ the $\sigma$-algebra
\[ \mathcal{A} = \Big\{ \bigcup_{i=1}^m \{i\} \times B_i \, :\,  i \in \{1,\ldots,m\}, B_i \in \mathcal B\Big\}.\]
Defining the transformation $T_u: \{1,2,\ldots,m\} \times [0,1) \rightarrow \{1,2,\ldots,m\} \times [0,1)$ by
\[ T_u(i,x) = \big((i+1) \bmod m, T_{u_i}(x)\big),\]
it follows from \cite{CCD} that there are measures $\mu_{u,1},\ldots,\mu_{u,m}$ on $([0,1),\mathcal{B})$ such that the measure $\mu_u$ on $(\{1,\ldots,m\} \times [0,1),\mathcal{A})$ given by
\[ \mu_u\Big(\bigcup_{i=1}^m \{i\} \times B_i \Big) = \frac{1}{m} \sum_{i=1}^m \mu_{u,i}(B_i), \qquad B_1,\ldots, B_m \in \mathcal{B},\]
is an ergodic invariant measure for $T_u$ that is equivalent to the measure $\lambda_u$ on $(\{1,\ldots,m\} \times [0,1),\mathcal{A})$ given by
\[ \lambda_u\Big(\bigcup_{i=1}^m \{i\} \times B_i \Big)  = \frac{1}{m} \sum_{i=1}^m \lambda(B_i), \qquad B_1,\ldots, B_m \in \mathcal{B}.\]
Since $\lambda_u = (\mathbf{Q} \times \lambda) \circ \psi^{-1}$ and $\psi$ is an isomorphism between the dynamical systems $(\{1,\ldots,m\} \times [0,1),\mathcal{A},\mu_u,T_u)$ and $(J^{\mathbb{N}} \times [0,1),\mathcal{B}_J^{\mathbb{N}} \times \mathcal{B},\mathbf{Q} \times \rho,F)$ with $\rho = \sum_{i=1}^m \mu_{u,i}$ and $F$ the skew product associated to $\{ S_\beta\}_{\beta \in J}$, it follows that $\mathbf Q \times \rho$ is an ergodic invariant measure for $F$ and that $\rho$ is equivalent to $\lambda$. Since $\delta$ is an upper bound for $J$, the collection $\Delta$ is finite and thus $H_{\mathbf Q \times \rho} (\Delta) < \infty$. Figure~\ref{f:betas}(b) illustrates the sets $\Delta(j)$ for $\eta \in (1,2)$ and $\delta \in (2,3)$. It follows that $\mathcal S = (J, \mathbf Q, \{ S_\beta \}_{\beta \in J}, \mathbf Q \times \rho, \{ \gamma_\beta \}_{\beta \in J})$ is a random number system. From Theorem~\ref{t:main}(iii) we get
\[ h^{\text{fib}} (\mathcal S) = \int_{J^\mathbb N} \int_{[0,1)} \log \omega_1 \, d\rho(x) d\mathbf Q(\omega) = \frac1m \sum_{i=1}^m \log u_i.\]

\vskip .2cm
By the definition of the digit sequence $(d^{\mathcal S}_n(\omega,x))_{n \ge 1}$ we can write for each $\omega \in J^{\mathbb{N}}$, $x \in [0,1)$ and $n \ge 1$ that
\[ S_\omega^n  (x) = \omega_n  S_\omega^{n-1} (x)  - d^{\mathcal S}_n(\omega,x),\]
so that
\[ x = \frac{d^{\mathcal S}_1(\omega,x)}{\omega_1} + \frac{d^{\mathcal S}_2(\omega,x)}{\omega_1 \omega_2} + \cdots + \frac{d^{\mathcal S}_n(\omega,x)}{\omega_1 \cdots \omega_n} + \frac{S_\omega^n (x)}{\omega_1 \cdots \omega_n}.\]
Since $\lim_{n \to \infty} \frac{S_\omega^n (x)}{\omega_1 \cdots \omega_n} \leq \lim_{n \to \infty} \frac1{\eta^n} = 0$, for each $x \in [0,1)$ and $\omega \in J^{\mathbb{N}}$ we obtain the {\em random mixed $\beta$-expansion}
\[ x = \sum_{n\ge 1} \frac{d^{\mathcal S}_n(\omega,x)}{\omega_1 \cdots \omega_n}.\]
With our choice of $\mathbf Q$ from \eqref{q:Q} it holds that for $\mathbf Q$-a.e.~$\omega \in J^\mathbb N$ with $\omega_1 = u_1$ the random mixed $\beta$-expansions produced by the system $\mathcal S$ are the {\em greedy $(u_1, \ldots, u_m)$-expansions in alternate base} that are the object of study in \cite{CCD}.

\vskip .2cm
With Theorem~\ref{t:randomlochs} we can compare the semi-regular continued fraction digits from the random continued fraction map from Example~\ref{x:gr} with the alternate base greedy $\beta$-expansions. If we let $\mathcal T = (I, \mathbf P, \{ T_i\}_{i =0,1}, \mathbf P_p \times \rho_p, \{ \alpha_i \}_{i=0,1})$ be the system from Example~\ref{x:gr} and let $(J, \mathbf Q, \{ S_\beta \}_{\beta \in J}, \mathbf Q \times \rho, \{ \gamma_\beta \}_{\beta \in J})$ be the system from Example~\ref{ex2.3}, then Theorem~\ref{t:randomlochs} tells us that for $\mathbf P_p \times \mathbf Q$-a.e.~$(\omega, \tilde \omega) \in I^\mathbb N \times J^\mathbb N$,
\[ \lim_{n \to \infty} \frac{m_{\mathcal T, \mathcal S}(n, \omega, \tilde \omega,x)}{n} = \frac{- 2 \int_{[0,1)} p\log x + (1-p)\log (1-x) \, d\rho_p (x)}{\frac1m \sum_{i=1}^m \log u_i} \quad \lambda\text{-a.e.}\]
\end{ex}

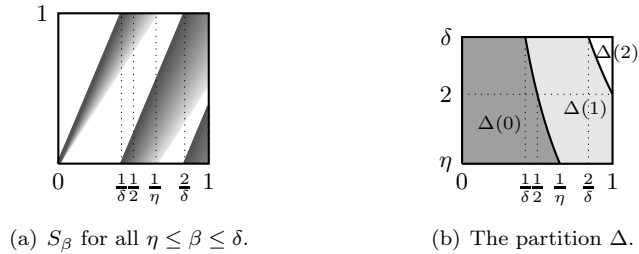
\begin{figure}[h]
\centering
\subfigure[$S_\beta$ for all $ \eta \le \beta \le \delta$.]{
\begin{tikzpicture}[scale=2]
\draw[white] (-.5,0)--(1.5,0);
\draw[line width=0.55mm,gray!20](0,0)--(.65,1);
\draw[line width=0.42mm,gray!25](0,0)--(.64,1);
\draw[line width=0.42mm,gray!30](0,0)--(.63,1);
\draw[line width=0.42mm,gray!35](0,0)--(.62,1);
\draw[line width=0.42mm,gray!40](0,0)--(.61,1);
\draw[line width=0.42mm,gray!45](0,0)--(.6,1);
\draw[line width=0.42mm,gray!50](0,0)--(.59,1);
\draw[line width=0.42mm,gray!55](0,0)--(.58,1);
\draw[line width=0.42mm,gray!60](0,0)--(.57,1);
\draw[line width=0.42mm,gray!65](0,0)--(.56,1);
\draw[line width=0.42mm,gray!70](0,0)--(.55,1);
\draw[line width=0.42mm,gray!75](0,0)--(.54,1);
\draw[line width=0.42mm,gray!80](0,0)--(.53,1);
\draw[line width=0.42mm,gray!85](0,0)--(.52,1);
\draw[line width=0.42mm,gray!90](0,0)--(.51,1);
\draw[line width=0.42mm,gray!95](0,0)--(.5,1);
\draw[line width=0.42mm,gray](0,0)--(.49,1);
\draw[line width=0.42mm,gray!95!black](0,0)--(.48,1);
\draw[line width=0.42mm,gray!90!black](0,0)--(.47,1);
\draw[line width=0.42mm,gray!85!black](0,0)--(.46,1);
\draw[line width=0.42mm,gray!80!black](0,0)--(.45,1);
\draw[line width=0.42mm,gray!75!black](0,0)--(.44,1);
\draw[line width=0.42mm,gray!70!black](0,0)--(.43,1);
\draw[line width=0.42mm,gray!65!black](0,0)--(.42,1);

\draw[line width=0.42mm,gray!20](.65,0)--(1,.538);
\draw[line width=0.42mm,gray!25](.64,0)--(1,.563);
\draw[line width=0.42mm,gray!30](.63,0)--(1,.587);
\draw[line width=0.42mm,gray!35](.62,0)--(1,.613);
\draw[line width=0.42mm,gray!40](.61,0)--(1,.639);
\draw[line width=0.42mm,gray!45](0.6,0)--(1,.667);
\draw[line width=0.42mm,gray!50](0.59,0)--(1,.695);
\draw[line width=0.42mm,gray!55](0.58,0)--(1,.724);
\draw[line width=0.42mm,gray!60](0.57,0)--(1,.754);
\draw[line width=0.42mm,gray!65](0.56,0)--(1,.786);
\draw[line width=0.42mm,gray!70](0.55,0)--(1,.818);
\draw[line width=0.42mm,gray!75](0.54,0)--(1,.852);
\draw[line width=0.42mm,gray!80](0.53,0)--(1,.887);
\draw[line width=0.42mm,gray!85](0.52,0)--(1,.923);
\draw[line width=0.42mm,gray!90](0.51,0)--(1,.961);
\draw[line width=0.42mm,gray!95](0.5,0)--(1,1);
\draw[line width=0.42mm,gray](0.49,0)--(.98,1);
\draw[line width=0.42mm,gray!95!black](0.48,0)--(.96,1);
\draw[line width=0.42mm,gray!90!black](0.47,0)--(.94,1);
\draw[line width=0.42mm,gray!85!black](0.46,0)--(.92,1);
\draw[line width=0.42mm,gray!80!black](0.45,0)--(.9,1);
\draw[line width=0.42mm,gray!75!black](0.44,0)--(.88,1);
\draw[line width=0.42mm,gray!70!black](0.43,0)--(.86,1);
\draw[line width=0.42mm,gray!65!black](0.42,0)--(.84,1);

\draw[line width=0.45mm,gray](0.98,0)--(1,.041);
\draw[line width=0.45mm,gray!95!black](0.96,0)--(1,.083);
\draw[line width=0.45mm,gray!90!black](0.94,0)--(1,.128);
\draw[line width=0.45mm,gray!85!black](0.92,0)--(1,.174);
\draw[line width=0.45mm,gray!80!black](0.9,0)--(1,.222);
\draw[line width=0.45mm,gray!75!black](0.88,0)--(1,.273);
\draw[line width=0.45mm,gray!70!black](0.86,0)--(1,.326);
\draw[line width=0.45mm,gray!65!black](0.84,0)--(1,.381);

\draw[dotted] (.42,0)--(.42,1)(.65,0)--(.65,1)(.5,0)--(.5,1)(.84,0)--(.84,1);

\draw[thick](0,0)node[below]{\small 0}--(.42,0)node[below]{\small $\frac1{\delta}$}--(.5,0)node[below]{\small $\frac12$}--(.64,0)node[below]{\small $\frac1{\eta}$}--(.84,0)node[below]{\small $\frac2{\delta}$}--(1,0)node[below]{\small 1}--(1,1)--(0,1)node[left]{\small 1}--(0,0);
\end{tikzpicture}}
\hspace{1cm}
\subfigure[The partition $\Delta$.]{
\begin{tikzpicture}[scale=2]
\draw[white] (-.5,2)--(1.5,2);
\filldraw[fill=gray!75, draw=gray!75] (.65,1.538)--(0,1.538)--(0,2.381)--(.42,2.381)--(.5,2)--(.65,1.528);
\filldraw[fill=gray!20, draw=gray!20] (.65,1.538)--(1,1.538)--(1,2)--(.84,2.381)--(.42,2.381)--(.5,2)--(.65,1.528);

\draw[thick](0,1.538)node[below]{\small 0}--(.42,1.538)node[below]{\small $\frac1{\delta}$}--(.5,1.538)node[below]{\small $\frac12$}--(.65,1.538)node[below]{\small $\frac1{\eta}$}--(.84,1.538)node[below]{\small $\frac2{\delta}$}--(1,1.538)node[below]{\small 1}--(1,2.381)--(0,2.381)node[left]{\small $\delta$}--(0,2)node[left]{\small $2$}--(0,1.538)node[left]{\small $\eta$};

\draw[dotted](0,2)--(1,2);
\draw[smooth,thick,samples =20, domain=.42:.65] plot(\x,{1/ \x});
\draw[smooth,thick,samples =20, domain=.84:1] plot(\x,{2/ \x});

\node at (.25,1.8) {\scriptsize $\Delta(0)$};
\node at (.82,1.9) {\scriptsize $\Delta(1)$};
\node at (1.03,2.27) {\scriptsize $\Delta(2)$};

\draw[dotted] (.42,1.538)--(.42,2.381);
\draw[dotted] (.84,1.538)--(.84,2.381);
\draw[dotted] (.5,1.538)--(.5,2);

\end{tikzpicture}}
\caption{In (a) we see the graphs of all the maps $S_\beta$ for $\beta \in [\eta,\delta]$ from Example~\ref{ex2.3} for some $1 < \eta < 2 < \delta < 3$. Each shade of grey corresponds to one graph. In (b) we see the elements of the partition $\Delta$ for values $\eta$ and $\delta$ as in (a).}
\label{f:betas}
\end{figure}

\bibliographystyle{alpha} 
\bibliography{Random_Lochs_Theorem} 

\newcommand{\etalchar}[1]{$^{#1}$}
\begin{thebibliography}{AFGTV20}

\bibitem[Aar97]{Aar97}
J.~Aaronson.
\newblock {\em An introduction to infinite ergodic theory}, volume~50 of {\em
  Mathematical Surveys and Monographs}.
\newblock American Mathematical Society, Providence, RI, 1997.

\bibitem[AFGTV20]{AFGV}
J.~Atnip, G.~Froyland, C.~Gonzalez-Tokman, and S.~Vaienti.
\newblock Thermodynamic formalism for random weighted covering systems.
\newblock arXiv:2002.11421, 2020.

\bibitem[AR66]{Abramov2}
L.~M. Abramov and V.~A. Rohlin.
\newblock The entropy of a skew product of measure-preserving transformations.
\newblock {\em Amer. Math. Soc. Transl. Ser.}, 2(48):255--265, 1966.

\bibitem[BBD14]{bahsoun3}
W.~Bahsoun, C.~Bose, and Y.~Duan.
\newblock Decay of correlation for random intermittent maps.
\newblock {\em Nonlinearity}, 27(7):1543--1554, 2014.

\bibitem[BBDK96]{BBDK}
J.~Barrionuevo, R.~M. Burton, K.~Dajani, and C.~Kraaikamp.
\newblock Ergodic properties of generalized {L}\"{u}roth series.
\newblock {\em Acta Arith.}, 74(4):311--327, 1996.

\bibitem[BC92]{bogenschutz92}
T.~Bogensch\"{u}tz and H.~Crauel.
\newblock The {A}bramov-{R}okhlin formula.
\newblock In {\em Ergodic theory and related topics, {III} ({G}\"{u}strow,
  1990)}, volume 1514 of {\em Lecture Notes in Math.}, pages 32--35. Springer,
  Berlin, 1992.

\bibitem[BD17]{BD17}
S.~Baker and K.~Dajani.
\newblock Induced random {$\beta$}-transformation.
\newblock {\em Acta Arith.}, 178(1):1--14, 2017.

\bibitem[BDK99]{bosma99}
W.~Bosma, K.~Dajani, and C.~Kraaikamp.
\newblock Entropy and counting correct digits.
\newblock Technical Report 9925, University of Nijmegen, 1999.
\newblock http://www-math.sci.kun.nl/math/onderzoek/reports/reports1999.html.

\bibitem[BG06]{BG06}
W.~Bahsoun and P.~G\'{o}ra.
\newblock Invariant densities for position-dependent random maps on the real
  line: existence, approximation and error bounds.
\newblock {\em Stoch. Dyn.}, 6(2):155--172, 2006.

\bibitem[BI08]{barreira08}
L.~Barreira and G.~Iommi.
\newblock Partial quotients of continued fractions and {$\beta$}-expansions.
\newblock {\em Nonlinearity}, 21(10):2211--2219, 2008.

\bibitem[Bog93]{Bogenschutz93}
T.~Bogensch\"utz.
\newblock {\em Equilibrium States for Random Dynamical Systems}.
\newblock PhD thesis, Bremen University, 1993.

\bibitem[Bog93]{bogenschutz92a}
T.~Bogensch\"{u}tz.
\newblock Entropy, pressure, and a variational principle for random dynamical
  systems.
\newblock {\em Random Comput. Dynam.}, 1(1):99--116, 1992/93.

\bibitem[Bra82]{Branton82}
M.~G. Branton.
\newblock {\em The Dynamics of Random Maps}.
\newblock PhD thesis, University of North Carolina at Chapel Hill, 1982.

\bibitem[BRS20]{BRS20}
W.~Bahsoun, M.~Ruziboev, and B.~Saussol.
\newblock Linear response for random dynamical systems.
\newblock {\em Adv. Math.}, 364:107011, 44, 2020.

\bibitem[CCD21]{CCD}
E.~Charlier, C.~Cisternino, and K.~Dajani.
\newblock Dynamical behaviour of alternate base expansions.
\newblock arXiv:2102.08627, 2021.

\bibitem[DDGV02]{DDGV02}
I.~Daubechies, R.~A. DeVore, C.~S. G{\"u}nt{\"u}rk, and V.~A. Vaishampayan.
\newblock Beta expansions: a new approach to digitally corrected {A}/{D}
  conversion.
\newblock {\em Proc.~IEEE Int.~Symp.~Circ.~Syst.}, 2:784--787, 2002.

\bibitem[DdV05]{DV05}
K.~Dajani and M.~de~Vries.
\newblock Measures of maximal entropy for random {$\beta$}-expansions.
\newblock {\em J. Eur. Math. Soc. (JEMS)}, 7(1):51--68, 2005.

\bibitem[DdV07]{invariant}
K.~Dajani and M.~de~Vries.
\newblock Invariant densities for random {$\beta$}-expansions.
\newblock {\em J. Eur. Math. Soc. (JEMS)}, 9(1):157--176, 2007.

\bibitem[DF01]{dajani01}
K.~Dajani and A.~Fieldsteel.
\newblock Equipartition of interval partitions and an application to number
  theory.
\newblock {\em Proc. Amer. Math. Soc.}, 129(12):3453--3460, 2001.

\bibitem[DGWYl10]{DGWY}
I.~Daubechies, S.~G\"{u}nt\"{u}rk, Y.~Wang, and \"{O}. Y\i~lmaz.
\newblock The golden ratio encoder.
\newblock {\em IEEE Trans. Inform. Theory}, 56(10):5097--5110, 2010.

\bibitem[Dij20]{dijk}
S.B. Dijk.
\newblock Rapid correlation decay for random intermittent maps.
\newblock Bachelor's thesis, Leiden University, 2020.

\bibitem[DJ17]{DJ17}
K.~Dajani and K.~Jiang.
\newblock Shrinking random {$\beta$}-transformation.
\newblock {\em Indag. Math. (N.S.)}, 28(1):74--83, 2017.

\bibitem[DK03]{DK03}
K.~Dajani and C.~Kraaikamp.
\newblock Random {$\beta$}-expansions.
\newblock {\em Ergodic Theory Dynam. Systems}, 23(2):461--479, 2003.

\bibitem[DK07]{DK07}
K.~Dajani and C.~Kalle.
\newblock Random {$\beta$}-expansions with deleted digits.
\newblock {\em Discrete Contin. Dyn. Syst.}, 18(1):199--217, 2007.

\bibitem[DK13]{DK13}
K.~Dajani and C.~Kalle.
\newblock Local dimensions for the random {$\beta$}-transformation.
\newblock {\em New York J. Math.}, 19:285--303, 2013.

\bibitem[DK20]{DK20}
K.~Dajani and C.~Kalle.
\newblock Invariant measures, matching and the frequency of 0 for signed binary
  expansions.
\newblock {\em Publ. Res. Inst. Math. Sci.}, 56(4):701--742, 2020.

\bibitem[DK21]{kalle2}
K.~Dajani and C.~Kalle.
\newblock A first course in ergodic theory.
\newblock 2021.

\bibitem[DKM]{DKM21}
K.~Dajani, C.~Kalle, and M.~Maggioni.
\newblock Matching for random systems with an application to minimal weight
  expansions.
\newblock To appear in Nonlinearity, arXiv: 2008.04398.

\bibitem[dMvS93]{dMvS93}
W.~de~Melo and S.~van Strien.
\newblock {\em One-dimensional dynamics}, volume~25 of {\em Ergebnisse der
  Mathematik und ihrer Grenzgebiete (3) [Results in Mathematics and Related
  Areas (3)]}.
\newblock Springer-Verlag, Berlin, 1993.

\bibitem[DO18]{DO18}
K.~Dajani and M.~Oomen.
\newblock Random {$N$}-continued fraction expansions.
\newblock {\em J. Approx. Theory}, 227:1--26, 2018.

\bibitem[Fai97]{faivre-ldp}
C.~Faivre.
\newblock On decimal and continued fraction expansions of a real number.
\newblock {\em Acta Arith.}, 82(2):119--128, 1997.

\bibitem[Fai98]{faivre-clt}
C.~Faivre.
\newblock A central limit theorem related to decimal and continued fraction
  expansion.
\newblock {\em Arch. Math. (Basel)}, 70(6):455--463, 1998.

\bibitem[Fai01]{faivre01}
C.~Faivre.
\newblock On calculating a continued fraction expansion from a decimal
  expansion.
\newblock {\em Acta Sci. Math. (Szeged)}, 67(3-4):505--519, 2001.

\bibitem[FWL16]{fang16}
L.~Fang, M.~Wu, and B.~Li.
\newblock Limit theorems related to beta-expansion and continued fraction
  expansion.
\newblock {\em J. Number Theory}, 163:385--405, 2016.

\bibitem[FWL19]{fang}
L.~Fang, M.~Wu, and B.~Li.
\newblock Beta-expansion and continued fraction expansion of real numbers.
\newblock {\em Acta Arith.}, 187(3):233--253, 2019.

\bibitem[G\"12]{Gun}
C.S. G\"{u}nt\"{u}rk.
\newblock Mathematics of analog-to-digital conversion.
\newblock {\em Comm. Pure Appl. Math.}, 65(12):1671--1696, 2012.

\bibitem[GB03]{GB03}
P.~G\'{o}ra and A.~Boyarsky.
\newblock Absolutely continuous invariant measures for random maps with
  position dependent probabilities.
\newblock {\em J. Math. Anal. Appl.}, 278(1):225--242, 2003.

\bibitem[GH17]{ale-jan}
M.~Gharaei and A.~J. Homburg.
\newblock Random interval diffeomorphisms.
\newblock {\em Discrete Contin. Dyn. Syst. Ser. S}, 10(2):241--272, 2017.

\bibitem[Her09]{atilla}
A.~Herczegh.
\newblock Central limit theorems in ergodic theory.
\newblock Master's thesis, E\"otv\"os Lor\'and University, 2009.

\bibitem[Ino12]{Ino}
T.~Inoue.
\newblock Invariant measures for position dependent random maps with continuous
  random parameters.
\newblock {\em Studia Math.}, 208(1):11--29, 2012.

\bibitem[JM16]{JM}
Y.~Jitsumatsu and K.~Matsumura.
\newblock A {$\beta$}-ary to binary conversion for random number generation
  using a {$\beta$} encoder.
\newblock {\em NOLTA, IEICE}, pages 38--55, 2016.

\bibitem[JMKA13]{JMKA}
Y.~Jitsumatsu, K.~Matsumura, T.~Kohda, and K.~Aihara.
\newblock Pseudo-random number generator using beta-encoder cmos circuit.
\newblock {\em The 3rd Int. Symp. Innovative Mathematical Modelling}, page 107,
  2013.

\bibitem[JSS11]{JSS11}
T.~Jordan, P.~Shmerkin, and B.~Solomyak.
\newblock Multifractal structure of {B}ernoulli convolutions.
\newblock {\em Math. Proc. Cambridge Philos. Soc.}, 151(3):521--539, 2011.

\bibitem[Kem14]{Kem14}
T.~Kempton.
\newblock On the invariant density of the random {$\beta$}-transformation.
\newblock {\em Acta Math. Hungar.}, 142(2):403--419, 2014.

\bibitem[KHTA12]{KHTA}
T.~Kohda, Y.~Horio, Y.~Takahashi, and K.~Aihara.
\newblock Beta encoders: symbolic dynamics and electronic implementation.
\newblock {\em Internat. J. Bifur. Chaos Appl. Sci. Engrg.}, 22(9):1230031, 55,
  2012.

\bibitem[Kif86]{kifer}
Y.~Kifer.
\newblock {\em Ergodic theory of random transformations}, volume~10 of {\em
  Progress in Probability and Statistics}.
\newblock Birkh\"{a}user Boston, Inc., Boston, MA, 1986.

\bibitem[KKV17]{KKV17}
C.~Kalle, T.~Kempton, and E.~Verbitskiy.
\newblock The random continued fraction transformation.
\newblock {\em Nonlinearity}, 30(3):1182--1203, 2017.

\bibitem[KMa]{KM21}
C.~Kalle and M.~Maggioni.
\newblock Invariant densities for random systems of the interval.
\newblock To appear in Ergodic Theory Dynam. Systems, arXiv: 1805.11430.

\bibitem[KMb]{KM}
C.~Kalle and M.~Maggioni.
\newblock On approximation by random {L}{\"u}roth expansions.
\newblock arXiv: 2101.01982.

\bibitem[KMTV]{KMTV}
C.~Kalle, V.~Matache, M.~Tsujii, and E.~Verbitskiy.
\newblock Invariant densities for random continued fractions.
\newblock arXiv:.

\bibitem[Loc64]{lochs}
G.~Lochs.
\newblock Vergleich der {G}enauigkeit von {D}ezimalbruch und {K}ettenbruch.
\newblock {\em Abh. Math. Sem. Univ. Hamburg}, 27:142--144, 1964.

\bibitem[LSV99]{LSV}
C.~Liverani, B.~Saussol, and S.~Vaienti.
\newblock A probabilistic approach to intermittency.
\newblock {\em Ergodic Theory Dynam. Systems}, 19(3):671--685, 1999.

\bibitem[LW08]{li08}
B.~Li and J.~Wu.
\newblock Beta-expansion and continued fraction expansion.
\newblock {\em J. Math. Anal. Appl.}, 339(2):1322--1331, 2008.

\bibitem[LY88]{ledrappier88}
F.~Ledrappier and L.-S. Young.
\newblock Entropy formula for random transformations.
\newblock {\em Probab. Theory Related Fields}, 80(2):217--240, 1988.

\bibitem[MIS{\etalchar{+}}15]{Makino}
T.~Makino, Y.~Iwata, K.~Shinohara, Y.~Jitsumatsu, M.~Hotta, H.~San, and
  K.~Aihara.
\newblock Rigorous estimates of quantization error for {A}/{D} converters based
  on beta-map.
\newblock {\em NOLTA J.}, 6(1):99--111, 2015.

\bibitem[Mor85]{morita1}
T.~Morita.
\newblock Random iteration of one-dimensional transformations.
\newblock {\em Osaka J. Math.}, 22(3):489--518, 1985.

\bibitem[Mor86]{morita3}
T.~Morita.
\newblock Entropy of random dynamical systems.
\newblock {\em Proc. Japan Acad. Ser. A Math. Sci.}, 62(4):121--124, 1986.

\bibitem[MP80]{ManPum2}
P.~Manneville and Y.~Pomeau.
\newblock Different ways to turbulence in dissipative dynamical systems.
\newblock {\em Phys. D}, 1(2):219--226, 1980.

\bibitem[Pel84]{pelikan}
S.~Pelikan.
\newblock Invariant densities for random maps of the interval.
\newblock {\em Trans. Amer. Math. Soc.}, 281(2):813--825, 1984.

\bibitem[R\'57]{renyi}
A.~R\'{e}nyi.
\newblock Representations for real numbers and their ergodic properties.
\newblock {\em Acta Math. Acad. Sci. Hungar}, 8:477--493, 1957.

\bibitem[SJO15]{SJO}
Y.~Shu, Y.~Jitsumatsu, and K.~Oda.
\newblock Performance evaluation of a random number generation using a beta
  encoder.
\newblock {\em NOLTA2015}, pages 511--514, 2015.

\bibitem[Suz19]{Suz19}
S.~Suzuki.
\newblock Invariant density functions of random {$\beta$}-transformations.
\newblock {\em Ergodic Theory Dynam. Systems}, 39(4):1099--1120, 2019.

\bibitem[VO16]{foundations}
M.~Viana and K.~Oliveira.
\newblock {\em Foundations of ergodic theory}, volume 151 of {\em Cambridge
  Studies in Advanced Mathematics}.
\newblock Cambridge University Press, Cambridge, 2016.

\bibitem[Wu06]{wu06}
J.~Wu.
\newblock Continued fraction and decimal expansions of an irrational number.
\newblock {\em Adv. Math.}, 206(2):684--694, 2006.

\bibitem[Wu08]{wu08}
J.~Wu.
\newblock An iterated logarithm law related to decimal and continued fraction
  expansions.
\newblock {\em Monatsh. Math.}, 153(1):83--87, 2008.

\bibitem[You99]{Y99}
L.-S. Young.
\newblock Recurrence times and rates of mixing.
\newblock {\em Israel J. Math.}, 110:153--188, 1999.

\bibitem[Zee18]{zeegers}
B.P. Zeegers.
\newblock On invariant densities and {L}ochs' theorem for random piecewise
  monotonic interval maps.
\newblock Master's thesis, Leiden University, 2018.

\bibitem[Zhu08]{zhu08}
Y.~Zhu.
\newblock On local entropy of random transformations.
\newblock {\em Stoch. Dyn.}, 8(2):197--207, 2008.

\end{thebibliography}
\end{document}